\newtheorem{theorem}{Theorem}[section]
\newtheorem{corollary}[theorem]{Corollary}
\newtheorem{lemma}[theorem]{Lemma}
\newtheorem{proposition}[theorem]{Proposition}
\newtheorem{definition}[theorem]{Definition}
\newtheorem{remark}[theorem]{Remark}
\newtheorem{example}[theorem]{Example}
\numberwithin{equation}{section}
\def\b0{{\bf 0}}
\def\bv{{\bf v}}
\def\by{{\bf y}}
\def\bg{{\bf g}}
\def\bx{{\bf x}}
\def\bt{{\bf t}}
\def\bz{{\bf z}}
\def\bw{{\bf w}}
\def\bu{{\bf u}}
\def\ba{{\bf a}}
\def\bB{{\bf b}}
\def\AJC{A_{\J_i^c}}
\newcommand{\ep}{\varepsilon}
\def\RR{{\mathbb R}}
\def\ZZ{{\mathbb Z}}
\def\Z{{\mathbb Z}}
\def\NN{{\mathbb N}}
\def\Nat{\NN}
\def\ZZ{{\mathbb Z}}
\def\P{{\mathcal P}}
\def\S{{\mathcal S}}
\def\F{{\mathcal F}}
\def\B{{\mathcal B}}
 \def\M{{\mathcal M}}
\def\A{{\mathcal A}}
\def\T{{\mathcal T}}
\def\R{{\RR}}
\def\Tk{\T}
\def\Sk{\S}
\def\dist{{\rm dist}}
\def\freq{{\rm freq}}
\def\J{{\mathcal J}}
\def\I{{\mathcal I}}
\def\supp{{\rm supp}}
\def\diam{{\rm diam}}
\def\vol{{\rm vol}}
\def\clos{{\rm Clos}}
\def\lam{\lambda}
\def\sig{\sigma}
\def\om{\omega}
\def\gam{\gamma}
\def\Anonp{\A_{\rm nonp}}
\def\Fnonp{\F_{\rm nonp}}
\def\Xnonp{X_{\rm nonp}}
\def\per{{\rm per}}
\def\Aper{\A_{\rm per}}
\def\Int{{\rm int}}
\def\es{\emptyset}
\def\seq{\subseteq}
\def\wt{\widetilde}
\def\Ak{\A}
\def\Xa{X_{\A,\om}}
\def\ov{\overline}
\def\Gam{\Gamma}
\def\be{\begin{equation}}
\def\ee{\end{equation}}
\def\Th{\Theta}
\def\Bl{B_\eta(\b0)}
\begin{document}
\title {Invariant measures for non-primitive tiling substitutions}
\author{M. I. Cortez}
\address{Mar\'{\i}a  Isabel Cortez, Departamento de Matem\'atica y Ciencia de la Computaci\'on, Universidad de Santiago de Chile, Av. Libertador Bernardo O'Higgins 3363, Santiago, Chile.}
 \email{maria.cortez@usach.cl}
\author{B. Solomyak}
\address{Boris Solomyak, Box 354350, Department of Mathematics,
University of Washington, Seattle WA 98195}
\email{solomyak@math.washington.edu}
\thanks{M. I. Cortez acknowledges financial support from proyecto Fondecyt
1100318. B. Solomyak was partially supported by NSF grant
DMS-0654408.}
\subjclass[2000]{Primary:  37B50; Secondary: 37A99 }
  \keywords{tiling systems, substitutions, invariant measures. }
\begin{abstract}
We consider self-affine tiling substitutions
in Euclidean space and the corresponding tiling dynamical systems. It is well-known that in the primitive case the
dynamical system is uniquely ergodic. We investigate invariant measures when the substitution is not primitive, and
the tiling dynamical system is non-minimal. We prove that all ergodic invariant
probability measures are supported on minimal components, but there are other natural ergodic invariant measures, which are infinite.
Under some mild assumptions, we completely characterize $\sig$-finite invariant measures which are positive and finite on a cylinder set.
A key step is to establish recognizability of non-periodic tilings in our setting.
Examples include the ``integer Sierpi\'nski gasket and carpet'' tilings. For
such tilings the only invariant probability measure is supported on trivial periodic tilings, but there is a fully
supported $\sig$-finite invariant measure, which is locally finite and unique up to scaling.
\end{abstract}
\maketitle
\section{Introduction}\label{introduction}
We consider self-affine substitution tilings of $\R^d$. A tile is a compact subset of $\R^d$ which is a closure of its interior.
A tiling is a set of tiles with disjoint interiors whose union is all of $\R^d$. We restrict ourselves to tilings which satisfy the
(translational) {\em finite pattern condition}, abbreviated FPC, i.e.\ for any $R>0$,
there are finitely many patterns, or patches, of diameter less $R$, up to translation. In particular, there are finitely many tiles up to translation;
we call some representatives of equivalence classes the {\em prototiles}. Sometimes tiles of the same shape need to be distinguished; this is achieved by
considering a tile as a pair $T=(F,j)$ where $F=\supp(T)$ is a compact set (the {\em support} of the tile), and $j\in \{1,\ldots,\ell\}$ for some $\ell\ge 1$,
is a {\em label} (or ``type'', or ``color'') of the tile. When translating a tile or a patch, the labels of the tiles are preserved.
Let $\Ak$ be a finite set of prototiles and $\Ak^+$ the set of
patches whose every tile is a translate of some $A\in \Ak$. Given
an expansive linear map $\varphi:\,\R^d\to \R^d$, we say that
$\om:\,\Ak \to \Ak^+$ is a {\em tile substitution with expansion}
$\varphi$ if the union of tiles in $\om(A)$ equals
$\varphi(\supp(A))$. In other words, every ``inflated tile'' can
be subdivided into translates of the prototiles. This property
allows us to iterate the substitution and obtain a family of
patches $\om^n(A),\, A\in \Ak,\, n\ge 1$. The {\em substitution
tiling space} $\Xa$ is defined as the set of all tilings of $\R^d$
whose every patch is a translate of a subpatch of $\om^n(A)$ for
some $A\in \Ak$ and $n\in \Nat$. We assume that this space has the
FPC property. We also assume that every prototile $A\in \Ak$ is
{\em admissible}, that is, there exists a tiling $\Tk\in \Xa$ with
$A\in \Tk$.
The space $\Xa$ is compact in the usual ``local'' metric, in which two tilings are considered to be close if they agree on a large ball around the origin up to
a small translation (see the next section for precise definitions), and $\R^d$ acts on $\Xa$ continuously by translations. This is the
{\em tiling dynamical system} associated with $\om$.
To a tiling substitution $\om$ we associate the {\em substitution matrix} $M$ whose entry $M(i,j)$ equals the number of tiles of type $i$ which appear in
the substitution $\om$ applied to a tile of type $j$. The substitution is {\em primitive} if $M^k>0$ for some $k\in \Nat$. Substitution tiling dynamical systems
have been studied almost exclusively in the primitive case, when they are
minimal and uniquely
ergodic. Here we begin the investigation of non-primitive tiling substitutions.
A tiling substitution can be viewed as a generalization of a symbolic substitutions, see \cite{Queff,Pytheas,Robi}.
Recently, a systematic investigation of non-primitive (one-dimensional) symbolic substitutions has been started in \cite{BKM,BKMS} (see also
\cite{Fi1,Yuasa1,Yuasa2}). Our work builds on \cite{BKMS}; however, we have to introduce many new ingredients.
The substitution
matrix $M$ is non-negative, and we can consider its irreducible components, see \cite[4.4]{LM}. We will show that $\Xa = X_{\Ak,\om^k}$ for $k\in \Nat$;
thus, by raising the
substitution to a positive power we can assume, without loss of generality that all irreducible components are primitive or equal to $[0]$.
Suppose that $M$ has irreducible components $M_1,\ldots,M_\ell$, and let $\A_1,\ldots,A_\ell$ be the corresponding subsets of the prototile set.
By reordering the prototiles, we
can assume that $M$ has a block upper-triangular form, with the diagonal blocks $M_i,\,i\le \ell$. In terms of the substitution, this means that $\om(A),\ A\in \Ak_j$,
contains only translated of the tiles from $\bigcup_{i=1}^j \Ak_i$.
Let $m\ge 1$ be the number of ``minimal'' irreducible components having the property that $\om(A) \seq \A_i^+$ for all $A\in \A_i$.
Then $\om_i:=\om|_{\Ak_i}$ is the usual primitive substitution for $i\le m$. It turns out that $X_i:=X_{\Ak_i,\om_i}$, $i\le m$, are precisely the minimal components of
the tiling dynamical system.
Our first main result is the following.

\medskip

\noindent {\bf Theorem A.} {\em All ergodic invariant probability measures for the substitution tiling system are supported on minimal components.}

\medskip

The proof uses the pointwise ergodic theorem and the fact that
only the patches from minimal components may have a positive
frequency in a tiling. However, this is only the beginning of the
story, as there are natural and interesting {\em infinite}
invariant measures. In order to characterize them, we need the
property of {\em recognizability}, namely, the invertibility of
the substitution map $\om$ extended to the tiling space $\Xa$. We
prove that it holds whenever the tiling substitution is
non-periodic, namely $\Tk+\bv\ne \Tk$ for $\Tk\in \Xa$ and $\bv
\ne \b0$, generalizing \cite{So} from the primitive case. Even
more, we establish recognizability of non-periodic tilings when
the tiling space does contain periodic ones, under a mild
geometric condition (the ``non-periodic border'' condition, see
Section 4 for details). Applying recognizability, we construct a
sequence of {\em nested Kakutani-Rokhlin partitions} of the
transversal which allows us to determine the natural $\sig$-finite
measures, first on the transversal, and then on the tiling space.
(Actually, in general it is only a covering, but it becomes a
partition when restricted to the set of non-periodic tilings,
which is enough for our purposes.) The {\em transversal} is
defined as the set of tilings which have a tile with a
``puncture'' at the origin; this is consistent with a view of the
tiling space as a lamination. {\em Transverse measures} are in
1-to-1 correspondence with invariant measures. Although the use of
transverse measures in tiling dynamics is by now standard, see
\cite{BBG}, we have to extend the theory to our setting, namely,
to the non-primitive case, and to include $\sigma$-finite
measures; this is done in the Appendix. In order to state our
results on $\sig$-finite measures, we need to introduce some
terminology. There are irreducible components $M_i$ and the
corresponding prototile subsets $\Ak_i$, which we call
``maximal'': they are characterized by the property {\em
$$
\om(A)\ \ \mbox{contains a tile of type $\Ak_i$}\ \ \Longrightarrow\ \ A\in \Ak_i.
$$
}
Let $p \ge 1$ be the number of maximal components; they correspond to prototile subsets $\Ak_{\ell-p+1},\ldots,$ $\Ak_\ell$.
Denote by $Y_i$, for $i\ge \ell-p+1$, the set of tilings $\Tk\in \Xa$ which contain at least one tile of type $\Ak_i$. The subsets
$Y_i\seq \Xa$ are non-empty (by the admissibility assumption), open, and invariant. We will show that $(Y_i,\R^d)$ is a
non-compact minimal tiling dynamical system for $i\ge \ell-p+1$, $Y_i\cap Y_j=\es$ for $i\ne j$, and $\bigcup_{i=1}^p Y_i$ is dense in $\Xa$.
We call $Y_i$ the {\em maximal} components of the tiling dynamical system.
Now we can state our second main result.

\medskip
\noindent
{\bf Theorem B.} {\em Suppose that the tiling substitution satisfies the non-periodic border condition (in particular, it is satisfied if
the substitution is non-periodic). Then for each $i=\ell-p+1,\ldots,\ell$, there is a unique, up to scaling,
ergodic invariant measure supported on $Y_i$, such that every point has an open neighborhood
with positive finite measure.}

\medskip

{\bf Remarks.} 1. It is often the case that $\Xa \ne \bigcup_{i=1}^m X_i \cup
\bigcup_{i=\ell-p+1}^\ell Y_i$, and there are other infinite
invariant measures, but those described in Theorem B
are the most natural ones. In Section 5
we classify all ergodic
invariant measures which are positive and finite on a ``cylinder
set''; they correspond to some irreducible components of
$M$.

\indent
2. There is, in general, a greater variety of invariant measures for (one-dimensional) symbolic substitutions than for tile substitutions considered
here; in particular, there are sometimes ergodic invariant probability measures of full support, see \cite{BKMS}. The reason is that in our case
the vector of volumes of the prototiles is always a strictly positive left eigenvector of the substitution matrix, whereas for a symbolic substitution such
an eigenvector may fail to exist.

\medskip

\indent The paper is organized as follows. Section 2 contains preliminaries, including the topological results about minimal and maximal components.
Theorem A is proved in Section 3. We obtain recognizability results, which are of independent interest,
in Section 4, and in Section 5 we investigate $\sig$-finite invariant measures and prove Theorem B. (We would like to point out that Sections 4 and 5 can be read independently.) Section 6 is devoted to examples and concluding
remarks. For specific examples the recognizability
properties can usually be checked directly. In Section 7 (the Appendix) we treat transverse measures.
\medskip


\medskip

{\bf Acknowledgment.} We are grateful to Karl Petersen for his interest, helpful comments, and suggestion to consider the  ``Sierpi\'nski carpet'' and
``gasket'' tilings.

\section{Preliminaries}
\label{preliminaries}
\subsection{Tilings.}
Fix a set of ``types'' labeled by $\{1,\ldots,N\}$, with $N\ge 1$.
A {\em tile} in $\RR^d$ is defined as a pair $T = (F,i)$ where
$F  = \supp(T)$ (the support of $T$) is a compact set in $\R^d$ which is the
closure of its interior, and $i = \ell(T)\in \{1,\ldots,N\}$ is the type of $T$.
A {\em tiling} of $\R^d$ is a set $\Tk$ of tiles such that $\R^d = \bigcup
\{\supp(T):\ T\in \Tk\}$ and distinct tiles (or rather, their supports)
have disjoint interiors.
A patch $P$ is a finite set of tiles with disjoint interiors. The
{\em support of a patch} $P$ is defined by $\supp (P) =
\bigcup\{\supp(T):\ T\in P\}$. The {\em diameter of a patch} $P$
is $\diam(P)=\diam(\supp(P))$. The {\em translate} of a tile
$T=(F,i)$ by a vector $\bg\in \R^d$ is $T+\bg =
(F+\bg, i)$. The translate of a patch $P$ is $P+\bg =
\{T+\bg:\ T\in P\}$. We say that two patches $P_1,P_2$ are
{\em translationally equivalent} if $P_2 = P_1+\bg$ for some
$\bg\in \R^d$. Finite subsets of $\Tk$ are called
$\Tk$-patches.
For a set $B\seq \RR^d$ we write
$$
[B]^\Tk = \{T\in \Tk:\ \supp(T)\cap B \ne \es\}.
$$
\begin{definition} \label{def-fpc} {
We say that a tiling $\Tk$ has (translational) {\em finite patch complexity} (FPC), or satisfies the {\em finite pattern condition},
if for any $R>0$ there are finitely many $\Tk$-patches of diameter less
than $R$ up to translation equivalence. This definition naturally extends to any collection of tilings.
}
\end{definition}
\begin{definition} \label{def-rep}
{A tiling $\Tk$ is {\em repetitive} if for any patch
$P\subset \Tk$ there is $R>0$ such that for any $\bx\in \R^d$
there is a $\Tk$-patch $P'$ such that $\supp(P')\subset
B_R(\bx)$ and $P'$ is a translate of $P$. }
\end{definition}
\subsection{Tile substitutions, self-affine tilings.}
A linear map $\varphi : \R^d \rightarrow \R^d$ is {\em expansive}
if all its eigenvalues lie outside the unit circle.
\begin{definition}\label{def-subst}
{Let $\Ak = \{A_1,\ldots,A_N\}$ be a finite set of tiles in $\R^d$
such that for $i\neq j$ the tiles $A_i$ and $A_j$ are not
translationally equivalent; we will call them {\em prototiles}. We
assume that every prototile is ``centered at the origin'', in the
sense that $\b0\in \Int(\supp(A_j))$ for all $j$. Denote by
$\Ak^+$ the set of patches made of tiles each of which is a
translate of one of the prototiles. A map $\omega: \Ak \to \Ak^+$
is called a {\em tile substitution} with expansion $\varphi$ if
\begin{equation} \label{def-sub}
\supp(\om(A_j)) = \varphi (\supp(A_j)) \ \ \  \mbox{for} \  j\le N.
\end{equation}
}
\end{definition}
In plain language, every expanded prototile $\varphi (A_j)$ can be decomposed into
a union of tiles (which are all translates of the prototiles) with disjoint
interiors.
The substitution $\om$ is extended to all translates of prototiles
by $\om(\bx+A_j)= \varphi(\bx) + \om(A_j)$, and to patches
by $\om(P)=\bigcup\{\om(T):\ T\in P\}$. This is well-defined due to
(\ref{def-sub}).
Denote by $X_\Ak$ the set of tilings whose tiles belong to $\Ak$ up to translation; note that $\om$ acts on $X_\Ak$ as well.
\begin{definition} \label{def-tsp} {
Let $\om$ be a tile substitution. A patch $P$ is said to be {\em
legal} if there exists $n\ge 1$, $A_j\in \Ak$, and $\bx\in
\R^d$, such that $P+\bx \subseteq \om^n(A_j)$. Denote by
$\Xa\seq X_\A$  the set of all tilings of $\R^d$ whose every
patch is legal. The set $X_{\Ak,\om}$ is called the {\em tiling
space} corresponding to the substitution.
We say that the substitution $\om$ has FPC if the space $X_{\Ak,\om}$ has FPC.
The substitution $\om$ is {\em admissible} if for every prototile $A_j$ there exists $\Tk\in \Xa$ such that $A_j\in \Tk$.
}
\end{definition}
The additive group $\R^d$ acts on $X_{\Ak,\om}$ by translations;
this action $(X_{\Ak,\om},\R^d)$ is called the {\em tiling
dynamical system} or the {\em self-affine tiling dynamical system}
associated to $\om$. It is clear from the definitions that
$\om(\Xa)\seq\Xa$.

We use a {\em tiling metric} on $\Xa$, in which
two tilings are close if after a small translation they agree
on a large ball around the origin. To make it precise,
we say that two tilings $\Tk_1,\Tk_2$ {\em agree} on a set $K \subset \R^d$ if
$$
\supp(\Tk_1\cap \Tk_2) \supseteq K.
$$
For $\Tk_1,\Tk_2 \in \Xa$ let
$$
\widetilde{\varrho}(\Tk_1,\Tk_2) := \inf\{r \in (0,2^{-1/2}): \
\exists\,\bg,\ \|\bg\| \le r\ \mbox{such that}
$$
$$
\Tk_1-\bg \mbox{ \ agrees with $\Tk_2$ on $B_{1/r}(\b0)$} \}.
$$
Then
$$
\varrho(\Tk_1,\Tk_2): = \min\{2^{-1/2},\widetilde{\varrho}(\Tk_1,\Tk_2)\}.
$$

\begin{theorem}\cite{Rud} {\em (see also \cite{Robi}).}
$(\Xa,\varrho)$ is a complete metric space. It is compact,
whenever the space has FPC. The action of $\R^d$ by translations
on $\Xa$, given by  $\bg(\Sk)= \Sk-\bg$, is continuous.
\end{theorem}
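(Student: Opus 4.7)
The plan is to verify the three assertions in turn, essentially following the standard arguments of Rudolph and of Robinson.

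First I would check that $\varrho$ is actually a metric. Symmetry and the cutoff at $2^{-1/2}$ are automatic from the definition. For the triangle inequality, if $\Tk_1-\bg_1$ agrees with $\Tk_2$ on $B_{1/r_1}(\b0)$ and $\Tk_2-\bg_2$ agrees with $\Tk_3$ on $B_{1/r_2}(\b0)$ with $\|\bg_i\|\le r_i$, then $\Tk_1-(\bg_1+\bg_2)$ agrees with $\Tk_3$ on $B_{1/r_1-\|\bg_2\|}(\b0)\cap B_{1/r_2}(\b0)$, from which one extracts the bound $\varrho(\Tk_1,\Tk_3)\le \varrho(\Tk_1,\Tk_2)+\varrho(\Tk_2,\Tk_3)$ by a small calculation. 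For non-degeneracy, $\varrho(\Tk_1,\Tk_2)=0$ means that for every $r>0$ there is a translation $\bg$ with $\|\bg\|\le r$ under which $\Tk_1-\bg$ agrees with $\Tk_2$ on $B_{1/r}(\b0)$; letting $r\to 0$ and using that tiles are closures of their interiors and labels are preserved under translation, every tile of $\Tk_1$ equals the corresponding tile of $\Tk_2$.

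For completeness, let $(\Tk_n)$ be a Cauchy sequence in $\Xa$. For each $k\in\Nat$ choose $n_k$ so that $\varrho(\Tk_n,\Tk_m)<2^{-k-2}$ for all $n,m\ge n_k$; then for each such $n,m$ there is a translation $\bg_{n,m}^{(k)}$ of norm at most $2^{-k-2}$ such that $\Tk_n-\bg_{n,m}^{(k)}$ agrees with $\Tk_m$ on $B_{2^{k+2}}(\b0)$. Composing translations and extracting a diagonal subsequence, I would construct a limit tiling $\Tk$ tile by tile: any given bounded region is eventually stabilized (up to arbitrarily small translations) by the sequence, and the finite patch in $\Tk_{n_k}$ covering that region converges (after applying a Cauchy sequence of translations) to a well-defined patch of $\Tk$. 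One verifies that $\Tk\in X_\A$ and that every patch of $\Tk$ is legal because it coincides with a patch of some $\Tk_n\in\Xa$ after a small translation; hence $\Tk\in\Xa$ and $\Tk_n\to \Tk$.

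Compactness under FPC is the key step and I expect this to be the main technical point. Given a sequence $(\Tk_n)\subset \Xa$, for each $R>0$ the patches $[B_R(\b0)]^{\Tk_n}$ all have diameter bounded by $R+2\max_A\diam(A)$, so by FPC there are only finitely many of them up to translation. For each such patch there are finitely many relative positions of $\b0$ within its support, up to a translation of size at most one tile diameter, so after passing to a subsequence and translating each $\Tk_n$ by a vector of norm $\le \max_A\diam(A)$, I can arrange that all the patches $[B_R(\b0)]^{\Tk_n}$ coincide exactly. A diagonalization over $R=1,2,3,\ldots$ produces a subsequence converging in $\varrho$ to a limit in $\Xa$, as in the completeness argument; the initial bounded translations do not hurt because $\Xa$ is translation invariant.

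Finally, continuity of the action $(\bg,\Sk)\mapsto \Sk-\bg$ is a direct triangle-inequality computation: if $\bg'$ is close to $\bg$ and $\Sk'$ is close to $\Sk$ in $\varrho$, then $\Sk'-\bg'$ differs from $\Sk-\bg$ by the composition of a small translation and the small disagreement of $\Sk,\Sk'$, so the ball on which the two agree is only slightly shrunk. The hard part, as noted, is the diagonalization for compactness; everything else is bookkeeping with the metric.
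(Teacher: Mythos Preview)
The paper does not give a proof of this theorem; it is stated as a citation to Rudolph \cite{Rud} (with a pointer to Robinson \cite{Robi}) and immediately followed by the next definition. So there is no in-paper argument to compare your proposal against.

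Your sketch is essentially the standard proof found in those references. One step worth tightening is the compactness argument: the phrase ``finitely many relative positions of $\b0$ within its support, up to a translation of size at most one tile diameter'' is not quite right (the positions form a continuum), and appealing to translation-invariance of $\Xa$ after shifting each $\Tk_n$ by a bounded vector does not by itself yield sequential compactness of $\Xa$---convergence of a translated subsequence does not imply convergence of the original unless you also control the translations. The clean way is: by FPC, pass to a subsequence on which the translation-equivalence class of $[B_R(\b0)]^{\Tk_n}$ is constant, say equal to $P+\by_n$ for a fixed patch $P$; since $\b0\in\supp(P+\by_n)$ the vectors $\by_n$ lie in a compact set, so pass to a further subsequence with $\by_n\to\by$. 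Then the $\Tk_n$ agree with a fixed patch on $B_R(\b0)$ after translations $\by_n-\by\to\b0$, which is exactly the $\varrho$-closeness required. Diagonalize over $R\in\Nat$. The remaining parts of your sketch (metric axioms, completeness, continuity of the action) are fine as outlined.
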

\begin{definition} \label{def-submat} {
To the tile substitution $\om$ we associate its $N \times N$ {\em
substitution matrix} $M=M_\om$, where $M(i,j)$ is the number of tiles of
type $A_i$ in the patch $\om(A_j)$. The substitution $\om$ is
called {\em primitive} if the substitution matrix is primitive,
that is, if there exists $k\in \NN$ such that $M^k$ has only
positive entries.}
\end{definition}
\begin{theorem}\cite{Gott} {\em (see also \cite[Sec.\,5]{Robi}).} \label{th-min}
\begin{enumerate}
\item[(i)] An FPC tiling system is repetitive if and only if it is
minimal, that is, every orbit $\{\Sk-\bg:\ \bg\in \R^d\}$
is dense in $X$.
\item[(ii)]
An FPC substitution tiling system is minimal if and
only if the substitution is primitive.
\end{enumerate}
\end{theorem}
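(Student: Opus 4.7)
\emph{Part (i).} I would adapt Gottschalk's theorem to the tiling setting. For \emph{minimal $\Rightarrow$ repetitive}, fix $\Tk \in X$ and a $\Tk$-patch $P$; the cylinder $U_P := \{\Sk \in X : P \subset \Sk\}$ is non-empty and open in the tiling metric (agreement on the compact set $\supp(P)$, up to a sufficiently small translation, is an open condition in $\varrho$). Minimality forces the open invariant set $\bigcup_{\bg \in \R^d}(U_P + \bg)$ to equal $X$; compactness yields finitely many translates $U_P + \bg_1,\ldots, U_P + \bg_n$ covering $X$. With $R := \max_i \|\bg_i\|$, for every $\Sk \in X$ and every $\bx \in \R^d$ the tiling $\Sk - \bx$ lies in some $U_P + \bg_i$, placing a translate of $P$ inside $B_R(\bx)$ in $\Sk$. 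Conversely, if $\Tk \in X$ is repetitive, then for every $\Sk \in X$ and every $R>0$, the patch $[B_R(\b0)]^{\Sk}$ coincides with some $\Tk$-patch up to translation (by FPC and the description of $X$ via patches of $\Tk$), and repetitivity of $\Tk$ supplies $\bg$ with $\Tk - \bg$ matching $\Sk$ on $B_R(\b0)$ up to arbitrarily small translation; hence $\Sk \in \overline{\R^d \cdot \Tk}$, so $X$ is a single orbit closure and minimal.

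\emph{Part (ii), primitive $\Rightarrow$ minimal.} Passing to a power of $\om$ (which leaves $\Xa$ unchanged), I would arrange that some prototile $A_{j_0}$ satisfies $A_{j_0} \subset \om(A_{j_0})$ at a fixed position, producing a substitution-fixed tiling $\Tk^* \in \Xa$ as the nested union of the $\om^n(A_{j_0})$. Every $\Tk^*$-patch $P$ lies in some $\om^n(A_{j_0})$; primitivity provides $k_0$ such that $\om^{n+k}(A_j)$ contains a translate of $\om^n(A_{j_0})$, and hence of $P$, for every $j$ and all $k \ge k_0$. The hierarchical decomposition of $\Tk^*$ into level-$(n+k)$ inflated prototiles then forces $P$ to recur within bounded distance, so $\Tk^*$ is repetitive and its orbit closure is minimal by part (i). This orbit closure equals $\Xa$ because every $\Sk \in \Xa$ has all its patches legal, each such patch lies in some $\om^m(A_j)$ and by primitivity in $\Tk^*$, so $\Sk$ is approximable by translates of $\Tk^*$.

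\emph{Part (ii), minimal $\Rightarrow$ primitive.} Suppose for contradiction $M^n(i_0,j_0) = 0$ for all $n \ge 1$, so $\om^n(A_{j_0})$ never contains an $A_{i_0}$-tile. By admissibility some $\Tk \in \Xa$ contains $A_{j_0}$. Applying the syndeticity argument of part (i) to $U := \{\Sk \in \Xa : A_{i_0} \in \Sk\}$ (non-empty by admissibility of $A_{i_0}$) produces a uniform $R$ such that $A_{i_0}$ occurs in every ball of radius $R$ in every tiling of $\Xa$. But $\om^n(\Tk) \in \Xa$ contains the inflated patch $\om^n(A_{j_0})$ of diameter growing like $\|\varphi\|^n$ and free of $A_{i_0}$-tiles, contradicting the uniform $R$ once $n$ is large.

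The main delicate point I anticipate is the FPC--metric bookkeeping: openness of the cylinders $U_P$ and the implication ``every $\Sk$-patch is legal $\Rightarrow \Sk \in \overline{\R^d\cdot \Tk^*}$'' both rely on $\varrho$ being generated by large-ball agreement up to small translation, with FPC reducing limit arguments to finite pattern combinatorics.
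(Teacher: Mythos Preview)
The paper does not give its own proof of this theorem; it is stated as a classical result with citations to Gottschalk and Robinson, and the text moves on immediately afterward. Your sketch reproduces the standard argument for both parts and is essentially correct.

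One minor point worth making explicit: in the repetitive $\Rightarrow$ minimal direction of (i), your claim that every $\Sk$-patch is a $\Tk$-patch presupposes that $X$ is the orbit closure of $\Tk$, which is indeed the intended reading (Gottschalk's almost-periodicity theorem is a statement about a single orbit closure). For an arbitrary closed translation-invariant FPC space, the existence of one repetitive tiling does not by itself force minimality. Likewise, in (ii) the construction of a substitution-fixed tiling $\Tk^*$ requires a translate of $A_{j_0}$ to sit in the \emph{interior} of $\om^k(A_{j_0})$ so that the nested patches exhaust $\R^d$; this is standard but is the one place where a little extra care is needed.
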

So far, much of the theory has focused on primitive tile
substitutions $\om$. In this paper we investigate what happens in
the absence of primitivity and repetitivity, in the context of
tiling systems which satisfy FPC.

\begin{lemma} \label{onto}
Let $\omega$ be an admissible substitution on the set of prototiles $\A$. Then $\omega:   X_{\A, \omega}\to X_{\A, \omega}$ is onto.
\end{lemma}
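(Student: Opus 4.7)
The plan is to construct a preimage of $\Tk\in\Xa$ by a compactness argument applied to preimages of larger and larger central patches of $\Tk$. For a sequence $R_n\to\infty$, set $P_n:=[B_{R_n}(\b0)]^{\Tk}$; by legality of $P_n$ (guaranteed by $\Tk\in\Xa$) there exist $m_n\ge 1$, $A_{j_n}\in\Ak$, and $\bx_n\in\R^d$ with $P_n+\bx_n\seq\om^{m_n}(A_{j_n})$. Using the intertwining relation $\om(P+\by)=\om(P)+\varphi(\by)$, the patch
\[
Q_n\,:=\,\om^{m_n-1}(A_{j_n})-\varphi^{-1}(\bx_n)
\]
is legal and satisfies $\om(Q_n)\supseteq P_n$.

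Next, I extend $Q_n$ to a tiling $\Sk_n\in\Xa$. This is the step where admissibility is essential: by hypothesis there exists $\Tk_{j_n}\in\Xa$ containing $A_{j_n}$, so I may take
\[
\Sk_n\,:=\,\om^{m_n-1}(\Tk_{j_n})-\varphi^{-1}(\bx_n),
\]
which lies in $\Xa$ (by $\om(\Xa)\seq\Xa$ together with the translation-invariance of $\Xa$) and contains $Q_n$ as a sub-patch. Compactness of $\Xa$ then supplies a convergent subsequence $\Sk_{n_k}\to\Sk\in\Xa$.

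Finally, since $\om(\Sk_{n_k})\supseteq\om(Q_{n_k})\supseteq P_{n_k}=[B_{R_{n_k}}(\b0)]^{\Tk}$, and both $\om(\Sk_{n_k})$ and $\Tk$ are tilings of $\R^d$ (the former because $\varphi$ is a linear isomorphism), the disjoint-interior property forces them to share \emph{all} tiles meeting $B_{R_{n_k}}(\b0)$; hence $\om(\Sk_{n_k})\to\Tk$ in the tiling metric. On the other hand, the standard continuity of $\om$ on $\Xa$ (derivable from expansiveness of $\varphi$, since agreement of two tilings on $B_R(\b0)$ up to a small translation forces agreement of their images on $\varphi(B_R(\b0))\supseteq B_{\lambda R}(\b0)$ up to the translation $\varphi$ of the original one) gives $\om(\Sk_{n_k})\to\om(\Sk)$. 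Uniqueness of limits then yields $\om(\Sk)=\Tk$. The main obstacle is the extension step: without admissibility, a legal patch $Q_n$ could fail to sit inside any tiling in $\Xa$, and the construction would collapse; the rest is a routine diagonal/compactness argument.
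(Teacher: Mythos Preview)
Your proof is correct and follows essentially the same approach as the paper: take central patches $P_n$ of $\Tk$, use legality to place them inside $\om^{m_n}(A_{j_n})$, invoke admissibility to extend $\om^{m_n-1}(A_{j_n})$ to a full tiling, translate appropriately, and pass to a convergent subsequence whose $\om$-image recovers $\Tk$. The only cosmetic differences are that the paper first passes to a subsequence to fix a single prototile $A$ (which is unnecessary, as your version shows), and that you make the continuity of $\om$ explicit where the paper leaves it implicit in its final line.
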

\begin{proof}
Let $\T\in X_{\A,\omega}$. For $n\geq 1$, we define
$$
P_n=[B_n(\b0)]^{\T}.
$$
Since the patches of $\T$ are legal, there exist $k_n\geq 1$,
$\bx_n\in \RR^d$ and $A^{(n)}\in \A$ such that
$$
P_n+\bx_n\subseteq \omega^{k_n}(A^{(n)}).
$$
Because $|\A|<\infty$, there exist an infinite subset $I\subseteq \NN$ and $A\in \A$ such that
$$
P_n+\bx_n\subseteq  \omega^{k_n}(A) \mbox{ for every } n\in I.
$$
Taking subsequences if it is necessary, we can suppose that
$k_n<k_{n+1}$. For $n\in I$, let $Q_n=\omega^{k_n-1}(A)$. Observe
that $P_n+\bx_n\subseteq \omega(Q_n)$. Let $\T_{A}\in \Xa$ be
such that $A\in \T_A$ (the tiling $\T_A$ exists by the definition
of admissible substitution), and   let
$\T_n=\omega^{k_n-1}(\T_A)$. We have
$$
Q_n=\omega^{k_n-1}(A)\subseteq \omega^{k_n-1}(\T_A)=\T_n,
$$
which implies $P_n+\bx_n\subseteq \omega(\T_n)$. By
compactness of $X_{\A,\om}$, there exist a subsequence
$(n_j)_{j\geq 0}$ and $\T'\in \Xa$ such that
$$
\lim_{j\to \infty} (\T_{n_j}-\varphi^{-1}(\bx_{n_j}))=\T'.
$$
Since for every $n\in I$,
$$
P_n \subseteq \omega(\T_n - \varphi^{-1} (\bx_n)),
$$
we get $\omega(\T')=\T.$
\end{proof}
\begin{lemma} \label{lemma.power} We have $X_{\A,\om} = X_{\A,\om^k}$ for $k\ge 2$.
\end{lemma}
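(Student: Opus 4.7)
The plan is to prove the two inclusions separately, the first being essentially by definition and the second requiring the surjectivity of $\om$ established in Lemma \ref{onto}.

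For the easy direction $X_{\A,\om^k}\seq X_{\A,\om}$, I would just observe that if $P$ is a legal patch for the substitution $\om^k$, i.e.\ $P+\bx\seq (\om^k)^n(A_j)=\om^{kn}(A_j)$ for some $n\ge 1$ and $A_j\in\A$, then $P$ is automatically legal for $\om$ (take $n'=kn$). Any tiling all of whose patches are legal for $\om^k$ therefore has all patches legal for $\om$.

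For the nontrivial direction $X_{\A,\om}\seq X_{\A,\om^k}$, the point is that a patch $P$ legal for $\om$ satisfies $P+\bx\seq\om^n(A_j)$ for some $n$, but $n$ need not be a multiple of $k$. I would fix $m\ge 1$ with $km\ge n$ and try to embed $\om^n(A_j)$ inside $\om^{km}(A'_j)$ for some prototile $A'_j$. The key lemma is: for every prototile $A_j$ and every integer $p\ge 0$, there exists a prototile $A'_j$ and a vector $\by\in\R^d$ such that $A_j+\varphi^p(\by)\in\om^p(A'_j)$. To prove this, use admissibility to pick $\Tk_j\in\Xa$ containing $A_j$, apply Lemma \ref{onto} iteratively to find $\Tk^{(p)}\in\Xa$ with $\om^p(\Tk^{(p)})=\Tk_j$, and then take $A'_j+\by\in\Tk^{(p)}$ to be the (translated) prototile whose image under $\om^p$ contains $A_j$.

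With this lemma applied at $p=km-n$, applying $\om^n$ to the inclusion $A_j+\varphi^p(\by)\in\om^p(A'_j)$ yields $\om^n(A_j)+\varphi^{km}(\by)\seq \om^{km}(A'_j)$, so after translation $P+\bx-\varphi^{km}(\by)\seq(\om^k)^m(A'_j)$, showing $P$ is legal for $\om^k$. Hence any tiling in $X_{\A,\om}$ lies in $X_{\A,\om^k}$. The main obstacle is really only the small bookkeeping step of finding the prototile $A'_j$ that sits above a given $A_j$ under $\om^p$; this is where admissibility and Lemma \ref{onto} do the essential work, and once that lemma is in hand the rest is just translation bookkeeping with $\varphi$.
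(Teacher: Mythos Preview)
Your proposal is correct and follows essentially the same route as the paper: both arguments use admissibility together with Lemma~\ref{onto} to find, for a given prototile $A_j$ and a suitable exponent $p$ (the paper takes $p=n(k-1)$, you take $p=km-n$), a prototile $A'_j$ with a translate of $A_j$ inside $\om^p(A'_j)$, whence $P$ occurs in $\om^{n+p}(A'_j)=(\om^k)^{m}(A'_j)$. The only slip is a harmless sign in the bookkeeping: from $A'_j+\by\in\Tk^{(p)}$ and $\om^p(A'_j+\by)\ni A_j$ one gets $A_j-\varphi^p(\by)\in\om^p(A'_j)$, not $A_j+\varphi^p(\by)$; absorbing the sign into $\by$ fixes this.
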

\begin{proof}
The inclusion $\supseteq$ is clear. For the other inclusion, let
$P$ be a legal patch for $\om$. Then $P$ occurs in some
$\om^n(A),\ A\in \A$. By assumption, there is a tiling $\S$ in
$X_{\A,\om}$ which contains a tile of type $A$. By
Lemma~\ref{onto}, there exists a tiling $\S'\in X_{\A,\om}$
such that $\om^{n(k-1)} (\S') = \S$. Then a tile of type $A$ is in
some $\om^{n(k-1)}(A')$ for some $A'\in \A$, hence $P$ occurs in
$\om^{nk}(A')$ which implies that $P$ is legal for $\om^k$.
\end{proof}
\subsection{Substitution matrix.}
Let $M\in \M_{N\times N}(\ZZ_+)$. The graph $G(M)$ associated to
$M$ is the directed graph whose set of vertices is
$\{1,\ldots,N\}$, such that there is an edge from $i$ to $j$ if
and only if $M(i,j)>0$. An equivalence relation is defined on the
set of vertices of $G(M)$ as follows: $i\thicksim j$ if and only
if $j=i$ or there is a path in $G(M)$ from $i$ to $j$ as well as a
path from $j$ to $i$. The matrix is {\em irreducible} if and only
if all the vertices of $G(M)$ are equivalent, otherwise, it is
{\em reducible}. We call the equivalence classes of $G(M)$ the {\em irreducible components}, see \cite[4.4]{LM}.\\

\indent
We say that an equivalence class $\alpha$ {\it has access} to the
equivalence class $\beta$, or that $\beta$ is accessible from
$\alpha$, if and only if $\alpha=\beta$ or if there exists a path
in $G(M)$ from a vertex in $\alpha$ to a vertex in $\beta$. This relation is denoted by $\alpha \succeq \beta$.
In a similar way we say that the vertex $i$ has access to $\beta$ if
there is a path in $G(M)$ from $i$ to a vertex in $\beta$.\\

\indent
For an equivalence class $\alpha$ we denote by $M_{\alpha}$ the
irreducible submatrix (diagonal block) of $M$ corresponding to the restriction of
$M$ to $\alpha$.\\

\indent
Now we return to our tiling substitution $\om$ and let $M=M_\om$ be the substitution matrix. We identify the vertex set of the graph
$G(M)$ with the prototile set $\Ak$. Since the tiling dynamical system is
completely determined by the space $\Xa$, we can, in view of Lemma~\ref{lemma.power}, replace $\om$ by $\om^k$ for $k\ge 2$.
The substitution matrix for $\om^k$ is clearly $M^k$. By raising a reducible non-negative matrix to a power we can get rid
of the ``cyclic structure'' of the irreducible components (this will increase the number of irreducible components if there is
nontrivial cyclic structure), see \cite[4.5]{LM} for details. Thus, we can (and will) assume, without loss of generality, that
\begin{equation} \label{eq-irre}
\mbox{\em every irreducible block of $M$ is either primitive or equals $[0]$.}
\end{equation}
\subsection{Minimal and maximal components.}
Consider the irreducible components of the graph $G(M)$ which are
{\em maximal} in the partial order $\succeq$. In other words, a
component $\alpha$ is maximal if it is not accessible from any
other component. Denote by $\A_1,\ldots,\A_m$ the subsets of the
prototile set $\A$ corresponding to these maximal components.
Observe that $m\ge 1$. By the definition of the substitution
matrix, we have $\om(\A_i) \seq \A_i^+$, so that
$\om_i:=\om|_{\A_i}$ is a tile substitution on the prototile set
$\A_i$. By assumption (\ref{eq-irre}), this substitution is
primitive, so $(X_{\A_i,\om_i},\R^d)$ is a minimal dynamical system
by Theorem~\ref{th-min}. Let $X_i = X_{\A_i,\om_i}$ for $i \le m$.
In the next lemma we show that these are precisely the minimal
components of the tiling dynamical system. (It seems
counter-intuitive that minimal components correspond to maximal
irreducible components of the graph, but this is just the
consequence of definitions. One can also consider the graph
$G(M^T)$ for the transpose of the substitution matrix; this
reverses the direction of edges, so the minimal components of the
dynamical system correspond to minimal irreducible components of
$G(M^T)$.)
\begin{lemma} \label{lem-minim} Suppose that $\om$ is an admissible FPC tile substitution satisfying (\ref{eq-irre}). Then
\begin{enumerate}
\item[(i)]
the minimal components of the tiling dynamical system $(\Xa,\R^d)$ are $(X_i,\R^d)$, for $i=1,\ldots,m$; they satisfy $\om(X_i) \seq X_i$;
\item[(ii)]
for any tiling $\Tk\in \Xa$ and a prototile $A\in \A_j$
which occurs in $ \Tk$, the orbit closure $\clos\{\Tk-\bg:\
\bg\in \R^d\}$ contains every minimal component $X_i$ such
that $\A_j$ is accessible from $\A_i$.
\end{enumerate}
\end{lemma}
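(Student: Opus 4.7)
My plan is to prove part (ii) first and then to derive part (i) as a consequence. For (i), the inclusions $X_i\seq\Xa$ and $\om(X_i)\seq X_i$ are immediate from $\om(\A_i)\seq\A_i^+$, and each $(X_i,\R^d)$ is minimal by Theorem~\ref{th-min}(ii) applied to the primitive substitution $\om_i$. For the converse, take a minimal component $Y\seq\Xa$, pick any $\Tk\in Y$ and any tile $A\in\Tk$ with $\ell(A)\in\A_j$. Since the condensation DAG of $G(M)$ is finite, every vertex has a source-ancestor, i.e.\ there is a maximal $\A_i$ with $\A_i\succeq\A_j$. Part (ii) applied to this $\A_i$ gives $X_i\seq\clos\{\Tk-\bg:\bg\in\R^d\}=Y$; minimality of $Y$, non-emptiness of $X_i$, and the fact that $X_i$ is closed and $\R^d$-invariant then force $Y=X_i$.

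For (ii), the goal is to show that $\Tk$ contains $\om_i$-legal $\A_i$-patches of arbitrarily large diameter. I iterate Lemma~\ref{onto} to obtain, for each $N\ge 1$, a tiling $\Tk_N\in\Xa$ with $\om^N(\Tk_N)=\Tk$ and a tile $T_N\in\Tk_N$ with $A\in\om^N(T_N)$. Unpacking this containment yields a length-$N$ edge path from $\ell(A)$ to $\ell(T_N)$ in $G(M)$; if $\A_{k(N)}$ denotes the irreducible component of $\ell(T_N)$, this says $\A_j\succeq\A_{k(N)}$, and combined with the hypothesis $\A_i\succeq\A_j$ and transitivity of $\succeq$ we get $\A_i\succeq\A_{k(N)}$. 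Since there are only finitely many prototiles $B$ with $\A_i\succeq\A_B$, I fix a single integer $n^*$ such that $\om^{n^*}(B)$ contains at least one tile of type in $\A_i$ for every such $B$; applied to $B=\ell(T_N)$ this produces an $\A_i$-tile $A'\in\om^{n^*}(T_N)$. By maximality of $\A_i$ the sub-patch $\om^{N-n^*}(A')\seq\om^N(T_N)\seq\Tk$ lies in $\A_i^+$ and equals $\om_i^{N-n^*}(A')$, and primitivity of $\om_i$ ensures that its diameter tends to $\infty$ with $N$.

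To conclude, let $\bg_D\in\R^d$ be the center of an $\om_i^K(A')$-patch of diameter at least $D$ produced as above, and extract by compactness of $(\Xa,\varrho)$ a subsequence $\Tk-\bg_D\to\Tk^*\in\Xa$. For any fixed $R$ and $D>2R$, the patch $[B_R(\b0)]^{\Tk-\bg_D}$ is (up to a small translation) a sub-patch of the translated $\om_i^K(A')$, hence $\om_i$-legal; passing to the limit preserves this, and since every bounded patch of $\Tk^*$ fits inside some $[B_R(\b0)]^{\Tk^*}$, we conclude $\Tk^*\in X_i$. The orbit closure of $\Tk$ therefore meets $X_i$, and minimality of $X_i$ (Theorem~\ref{th-min}(ii)) implies $X_i\seq\clos\{\Tk-\bg:\bg\in\R^d\}$. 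The main obstacle I foresee is precisely this last step: it is critical that the patches we center on have the form $\om_i^K(A')$, which are automatically $\om_i$-legal, rather than arbitrary $\A_i$-sub-patches of $\om^N(T_N)$, since the latter may straddle several distinct $\A_i$-descendants of $T_N$ and need not lie inside any single $\om_i$-iterate.
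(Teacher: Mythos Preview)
Your proof is correct and follows essentially the same approach as the paper's: use surjectivity of $\om$ to locate arbitrarily large super-tiles $\om^N(T_N)\seq\Tk$, use accessibility of $\A_{k(N)}$ from $\A_i$ to locate an $\A_i$-tile inside an early iterate, and conclude that $\Tk$ contains arbitrarily large $\om_i$-legal patches, whence compactness and minimality of $X_i$ finish. The only cosmetic difference is that the paper first applies pigeonhole to fix a single prototile $B$ with $\om^{k_n}(B)\seq\Tk$ for infinitely many $k_n$, and then finds one $s$ with an $\A_i$-tile in $\om^s(B)$; you instead let $T_N$ vary and compensate with a uniform $n^*$ working for all prototiles accessible from $\A_i$ --- an equivalent bookkeeping choice.
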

\begin{proof} We already know that $(X_i,\R^d)$ is minimal and $\om(X_i) \seq X_i$. Part (ii) will imply that there are no other minimal components; thus, it remains
to prove (ii). Let $A\in \A_j$ and $A+\bx \in \Tk$ for some
$\bx\in \R^d$ and $j\geq 1$. By Lemma \ref{onto}, there exists a
sequence $k_n\uparrow \infty$ and prototiles $A^{(n)}$ such that
$$
A+\bx\in \om^{k_n}(A^{(n)})+\bx_n \seq \Tk\ \ \mbox{for
some}\ \bx_n \in \R^d.
$$
Passing to a subsequence, we can assume that $A^{(n)} = B$ for all
$n$. Let $1\leq i\leq m$ be such that $\A_j$ is accessible from
$\A_i$, then $B$ is accessible from $\A_i$, hence $\om^s(B)$
contains a prototile of type $\A_i$ for some $s\in \NN$. Since
$\T$ contains a translates of $\omega^{n+s}(B)$ for arbitrarily
large $n$, the closure of the
 orbit  of $\Tk$ contains
$X_{\A,\om_i}=X_i$.
\end{proof}
\begin{corollary}
\label{components}
There are at  most $|\A|$ minimal components in $X_{\A,\omega}$.
\end{corollary}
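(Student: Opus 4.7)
The plan is to deduce the bound directly from Lemma~\ref{lem-minim}(i), which identifies the minimal components of $(\Xa,\R^d)$ with the subsystems $(X_i,\R^d)$ for $i=1,\ldots,m$, where $m$ is the number of maximal irreducible components of the graph $G(M)$ in the partial order $\succeq$. Thus it suffices to argue that $m\le |\A|$.

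First I would recall that the irreducible components of $G(M)$ are by definition equivalence classes of the equivalence relation $\sim$ on the vertex set, which was identified with the prototile set $\A$. Distinct equivalence classes are disjoint subsets of $\A$, so the total number of irreducible components is at most $|\A|$. The maximal components (in the accessibility order $\succeq$) form a subfamily of the collection of irreducible components, hence
\[
m \le \#\{\text{irreducible components of } G(M)\} \le |\A|.
\]
Combining this with the identification of minimal components provided by Lemma~\ref{lem-minim}(i) gives the claim.

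There is essentially no obstacle here: the statement is a bookkeeping consequence of the correspondence between minimal topological components of the tiling system and maximal irreducible blocks of the substitution matrix, together with the obvious fact that a partition of $\A$ into equivalence classes has at most $|\A|$ pieces. The only point worth emphasizing is that after applying Lemma~\ref{lemma.power} to replace $\om$ by a power, the reduction to assumption \eqref{eq-irre} does not change the tiling space $\Xa$, so the count of minimal components is the same whether we work with $\om$ or with the power used to obtain \eqref{eq-irre}.
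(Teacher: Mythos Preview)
Your argument is correct and is exactly the immediate deduction the paper intends: the corollary is stated without proof right after Lemma~\ref{lem-minim}, since part (i) identifies the minimal components with the $X_i$ for $i=1,\ldots,m$, and the maximal irreducible classes of $G(M)$ are disjoint nonempty subsets of $\A$, hence $m\le |\A|$. Your remark about passing to a power via Lemma~\ref{lemma.power} not affecting $\Xa$ is also appropriate and matches the paper's standing convention.
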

Suppose that the matrix $M$, and the graph $G(M)$, have $\ell$
irreducible components. It is also useful to consider the {\em
minimal} irreducible components of the graph $G(M)$ (or maximal
irreducible components of $G(M^T)$). The corresponding subsets
$\A_j$ of the prototile set are characterized by the property that
for any $i\ne j$ and $A\in \A_i$, the substitution $\om(A)$ does
not contain tiles of type $\A_j$. Suppose there are $p$ such
components; the corresponding prototile sets are
$\A_{\ell-p+1},\ldots,\A_\ell$. Note that if $j\ge \ell-p+1$ and
$A\in \A_j$, then the substitution $\om(A)$ necessarily contains a
tile of type $\A_j$, since otherwise the substitution is not
admissible. Thus, the corresponding matrices $M_j$ are nonzero and
hence primitive by (\ref{eq-irre}). Let
$$
Y_j:= \{\Tk \in \Xa:\ \Tk\ \mbox{contains a tile of type}\ \A_j\}.
$$
We call $Y_j,\, j={\ell-p+1},\ldots,\ell$, the {\em maximal components} of the tiling space $\Xa$.
\begin{lemma} \label{lem-maxim} Suppose that $\om$ is an admissible FPC tile substitution satisfying (\ref{eq-irre}). Then
\begin{enumerate}
\item[(i)] The subsets $Y_j,\, j={\ell-p+1},\ldots,\ell$, are mutually disjoint,
open in $\Xa$, and invariant under the translation $\R^d$-action and the
substitution $\Z_+$-action;
\item[(ii)]
for any $\Tk \in Y_j,\, j={\ell-p+1},\ldots,\ell$, we
have $Y_j\seq\clos\{\Tk-\bg:\,\bg\in \R^d\}$;
\item[(iii)] $\bigcup_{j=\ell-p+1}^\ell Y_j$ is dense in $\Xa$.
\end{enumerate}
\end{lemma}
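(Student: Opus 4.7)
The plan is to handle the three statements in turn, relying on one structural fact throughout: if a legal patch $P$ in $\Xa$ contains a tile of type $\A_j$ (with $j\in\{\ell-p+1,\dots,\ell\}$) and $P+\bz\seq\om^n(A)$, then the seed prototile $A$ itself lies in $\A_j$. This follows by induction on $n$ from the minimal-component property quoted in the excerpt: no $\A_j$-tile can appear in $\om$ of a prototile outside $\A_j$, so an $\A_j$-tile in $\om^n(A)$ forces its ``parent'' in $\om^{n-1}(A)$ to lie in $\A_j$, and descending to $n=0$ gives $A\in\A_j$. For (i), disjointness of distinct $Y_j,Y_{j'}$ is then immediate: a common element $\Tk$ would yield a patch $[B_R(\b0)]^\Tk$ containing tiles of both types, sitting inside some $\om^n(A)$ with $A\in\A_j\cap\A_{j'}=\es$. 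Openness uses that if the $\A_j$-tile of $\Tk$ has support inside $B_R(\b0)$, then every $\Sk$ at sufficiently small tiling distance from $\Tk$ agrees with a small translate of $\Tk$ on $B_R(\b0)$ and thereby inherits an $\A_j$-tile. Translation-invariance is clear, and substitution-invariance is the excerpt's remark that $\om(A)$ always contains an $\A_j$-tile when $A\in\A_j$.

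For (ii), fix $\Tk,\Sk\in Y_j$; I show that every sufficiently large $\Sk$-patch around the origin reappears up to translation inside $\Tk$. Pick $R$ large enough that $Q:=[B_R(\b0)]^\Sk$ contains an $\A_j$-tile; the structural fact gives $Q+\bz\seq\om^n(A_0')$ with $A_0'\in\A_j$. On the $\Tk$-side, iterating onto-ness of $\om$ (Lemma \ref{onto}) yields $\Tk=\om^m(\Tk_m)$ for any $m$, and the $\A_j$-tile of $\Tk$ descends (by the structural fact) from an $\A_j$-tile in $\Tk_m$, so $\Tk$ already contains a translate of the patch $\om^m(A_0)$ for some $A_0\in\A_j$. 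Take $m=n+k$ with $k$ large. Edges of $G(M)$ issuing from $\A_j$-vertices remain in $\A_j$ (another consequence of minimality), so paths between $\A_j$-vertices stay inside $\A_j$, whence $M^k(A_0',A_0)=M_j^k(A_0',A_0)>0$ for large $k$ by primitivity of $M_j$. Consequently some translate of the tile $A_0'$ occurs inside $\om^k(A_0)$; applying $\om^n$ implants a translate of $\om^n(A_0')$, and therefore of $Q$, inside $\om^{n+k}(A_0)\seq\Tk$. Letting $R\to\infty$ places $\Sk$ in $\clos\{\Tk-\bg:\bg\in\R^d\}$. The main subtle point is that the restriction of $\om$ to $\A_j$-prototiles is \emph{not} itself a tile substitution in the sense of this paper---$\om(A)$ for $A\in\A_j$ may contain tiles outside $\A_j$---so one cannot simply work with $\om_j^n$ but must use the full $\om^n$-patches and invoke the one-way ancestry above.

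For (iii), given $\Tk\in\Xa$ and $R>0$, the patch $P:=[B_R(\b0)]^\Tk$ is legal, so $P+\bz\seq\om^n(A)$ for some prototile $A$. Since the partial order on components is finite, there is a minimal class $\A_j$ and a path in $G(M)$ from $A$ to some vertex $B\in\A_j$; padding with additional steps inside $\A_j$ via primitivity of $M_j$, I may take this path to have any sufficiently large length $k$. Then $A$ occurs as a tile in $\om^k(B)$, so applying $\om^n$ plants a translate of $P$ inside $\om^{n+k}(B)$, a patch that moreover contains $\A_j$-tiles by primitivity of $M_j$. Choose $\wt\Tk\in\Xa$ containing a tile of type $B$ (admissibility), and let $\Sk$ be $\om^{n+k}(\wt\Tk)$ translated so that the copy of $P$ sits over the origin; then $\Sk\in\Xa$ agrees with $\Tk$ on $B_R(\b0)$ and lies in $Y_j$, proving density.
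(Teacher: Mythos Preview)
Your proof is correct and follows essentially the same approach as the paper's. The only notable variation is in part (ii): the paper invokes the compactness argument from the proof of Lemma~\ref{onto} to extract a single prototile $A\in\A_j$ with $[B_n(\b0)]^\Tk+\bx_n\seq\om^{k_n}(A)$ for infinitely many $n$, and then observes that primitivity of $M_j$ allows the same $A$ to serve for $\Sk$; you instead apply the \emph{conclusion} of Lemma~\ref{onto} (onto-ness) to produce preimages $\Tk_m$ and trace the $\A_j$-ancestry down. Both routes rest on the same structural fact you isolate at the start and on primitivity of $M_j$, so the difference is cosmetic rather than substantive. One minor point of exposition: in (ii) the prototile $A_0\in\A_j$ you obtain from $\Tk_m$ may a priori depend on $m=n+k$, but since $M_j^k>0$ entrywise for all large $k$, the inequality $M_j^k(A_0',A_0)>0$ holds regardless of which $A_0\in\A_j$ arises, so the argument goes through unchanged.
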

\begin{proof} (i) This is immediate; $Y_j$ is invariant under $\om$ because the irreducible component $M_j$ is non-zero.\\
\indent
(ii) Exactly as in the proof of Lemma~\ref{onto}, we obtain $A\in
\A$, an infinite set $I_\Tk$ and $\bx_n \in \R^d$ for $n\in
I_\Tk$ such that
\begin{equation} \label{vspom}
[B_n(\b0)]^{\Tk} + \bx_n \seq \om^{k_n}(A)\ \ \mbox{ for}\  n\in
I_\Tk.
\end{equation}
The assumption $\Tk \in Y_j$
implies that $A\in \A_j$. But $M_j$ is primitive, so we can use the same $A\in \A_j$ for any $\Sk \in Y_j$. This implies that the orbit of $\Tk$ contains $\Sk$ in its
closure, as desired.\\
\indent
(iii) Let $\Tk \in \Xa$. Again we find $A$ and $I_\Tk$ satisfying
(\ref{vspom}). By the definition of the graph $G(M)$, the vertex
(prototile) $A$ has access to one of the maximal components
$\A_j$, with $\ell-p+1 \le j \le \ell$. This means $A+ \bz \in
\om^{k_0}(A')$ for some $A' \in \A_j$ and $k_0 \in \NN$. Let
$\Tk'\in \Xa$ be a tiling containing $A'$, which exists by
admissibility. Then $\om^{k_0+k_n}(\Tk')-\varphi^{k_n}(\bz) -
\bx_n \in Y_j$, and this sequence converges to $\Tk$.
\end{proof}
\subsection{Non-negative matrices.}
Let $M\in \M_{k\times k}(\ZZ_+)$ and let $\alpha$ be an
equivalence class of $G(M)$. Denote by $\rho_{\alpha}$ its
spectral radius. The class $\alpha$ is {\it distinguished} if
$\rho_{\alpha}>\rho_{\beta}$ for every class $\beta\neq \alpha$
which has access to $\alpha$. In particular, if $\alpha$ is not
accessible from any other class, then $\alpha$ is distinguished. A
real number $\lambda$ is called a {\it distinguished eigenvalue}
of $M$ if there exists a non-zero vector $\bx\geq \b0$ such that
$M\bx=\lambda \bx$. The following theorem extends the
Perron-Frobenius Theorem to reducible matrices, see \cite{S},
\cite{TS2} and \cite{Vic} for the proof.
\begin{theorem}
\label{Perron-Frobenius} Let $M\in \M_{k\times k}(\ZZ_+)$.
\begin{enumerate}
\item[(i)] A real number $\lambda$ is a distinguished eigenvalue if and
only if there exists a distinguished class $\alpha$ for $M$ such
that $\rho_{\alpha}=\lambda$.
\item[(ii)] If $\alpha$ is a distinguished class of $G(M)$, then there
exists a unique (up to scaling) non-negative eigenvector
$\bv_{\alpha}=(\bv_1,\cdots, \bv_k)$ corresponding to
$\rho_{\alpha}$ with the property that $\bv_i>0$ if and only
if the class $\alpha$ is accessible from the vertex $i$.
\end{enumerate}
\end{theorem}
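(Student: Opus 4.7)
The plan is to put $M$ in block upper-triangular (Frobenius normal) form by ordering its equivalence classes compatibly with the access relation $\succeq$, and then to build and analyze non-negative eigenvectors one block at a time, working outward from the distinguished class.

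\medskip

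For the existence part of (ii) and the ``if'' direction of (i), I would fix a distinguished class $\alpha$, set $\lambda=\rho_\alpha$, let $S_\alpha=\{\beta : \beta\succeq\alpha\}$, and define $\bv_\alpha$ as follows: place the strictly positive Perron eigenvector of the irreducible diagonal block $M_\alpha$ in the $\alpha$-coordinates; put zero in the coordinates of every class $\gamma\notin S_\alpha$; and on each $\beta\in S_\alpha\setminus\{\alpha\}$, processed in increasing order of directed distance to $\alpha$ in $G(M)$, let $\bv_\alpha|_\beta$ be the unique solution of
$$
(\lambda I-M_\beta)\,\bv_\alpha|_\beta \;=\; \sum_{\gamma\in S_\alpha,\,\gamma\ne\beta} M_{\beta\gamma}\,\bv_\alpha|_\gamma.
$$
Because $\alpha$ is distinguished, $\lambda>\rho_\beta$, so $\lambda I-M_\beta$ is invertible with non-negative inverse $\lambda^{-1}\sum_{n\ge 0}(M_\beta/\lambda)^n$. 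The right-hand side is non-negative and also non-zero, since $\beta$ has at least one outgoing edge to a class $\gamma_1\ne\beta$ on a path from $\beta$ to $\alpha$, and $\bv_\alpha|_{\gamma_1}>0$ was established at an earlier stage. Irreducibility of $M_\beta$ (or the trivial case $M_\beta=[0]$) then forces $\bv_\alpha|_\beta>0$ entrywise. A final block-structure check verifies $M\bv_\alpha=\lambda\bv_\alpha$ on the rows indexed by classes outside $S_\alpha$, where both sides vanish.

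\medskip

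For uniqueness in (ii) and the ``only if'' in (i), any non-negative $\bw$ with $M\bw=\lambda\bw$ and support $\bigcup_{\beta\in S_\alpha}\beta$ satisfies $M_\alpha\bw|_\alpha=\lambda\bw|_\alpha$ (the off-diagonal contributions vanish since $\bw$ is zero outside $S_\alpha$), so classical Perron--Frobenius gives $\bw|_\alpha=c\,\bv_\alpha|_\alpha$, and the same block-by-block recursion forces $\bw=c\bv_\alpha$. For the converse of (i), given $\bx\ge 0$, $\bx\ne 0$, with $M\bx=\lambda\bx$, let $S=\supp(\bx)$. The equations $0=\lambda x_i=\sum_j M(i,j)x_j$ for $i\notin S$ force $M(i,j)=0$ whenever $j\in S$, so $S$ is upward-closed under $\succeq$ and is a union of equivalence classes. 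Pick any $\alpha$ that is $\succeq$-minimal among the classes contained in $S$; the $\alpha$-block of the eigen-equation then reduces to $M_\alpha\bx|_\alpha=\lambda\bx|_\alpha$ with $\bx|_\alpha>0$, giving $\lambda=\rho_\alpha$. To see $\alpha$ is distinguished, for any $\gamma\ne\alpha$ with $\gamma\succeq\alpha$ the $\gamma$-block equation reads $(\lambda I-M_\gamma)\bx|_\gamma=\bB_\gamma\ge 0$ with $\bB_\gamma\ne 0$ (same path argument); pairing with the positive left Perron eigenvector $\bu$ of $M_\gamma$ would give $\bu^\top\bB_\gamma=\bu^\top(\lambda I-M_\gamma)\bx|_\gamma=0$ if $\rho_\gamma=\lambda$, contradicting $\bu>0$ and $\bB_\gamma\ge 0,\ne 0$; hence $\rho_\gamma<\lambda$.

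\medskip

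The delicate step, and main obstacle, will be tracking strict positivity through the back-substitution: at each $\beta$ one must know that the inductively built driving term is non-zero, which is precisely where the graph-theoretic fact $\beta\succeq\alpha$ gets promoted into an actual edge into an already-processed class---and this dictates processing classes in order of distance to $\alpha$. Once that ordering is fixed, the remainder is the standard toolkit of non-negative matrix theory: Neumann series for $(\lambda I-M_\beta)^{-1}$, classical Perron--Frobenius on each irreducible block, and a Collatz--Wielandt--type pairing with the left Perron vector to exclude $\rho_\gamma=\lambda$.
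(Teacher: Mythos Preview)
The paper does not prove Theorem~\ref{Perron-Frobenius}; it simply cites \cite{S}, \cite{TS2}, and \cite{Vic}. So there is no ``paper's proof'' to compare against, and your task is really to give a self-contained argument, which you have essentially done. Your approach---Frobenius normal form, back-substitution via the Neumann series $(\lambda I - M_\beta)^{-1}=\sum_{n\ge 0} M_\beta^n/\lambda^{n+1}$, and a pairing with the left Perron vector---is the standard one and is correct in outline.

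There is one small gap in the ``only if'' direction of (i). You rule out $\rho_\gamma=\lambda$ by pairing with the left Perron eigenvector $\bu>0$ of $M_\gamma$, obtaining $\bu^\top \bB_\gamma = \bu^\top(\lambda I - M_\gamma)\bx|_\gamma = 0$, a contradiction. But you then write ``hence $\rho_\gamma<\lambda$'' without excluding $\rho_\gamma>\lambda$. The fix is immediate and uses the same pairing: in general $\bu^\top(\lambda I - M_\gamma)\bx|_\gamma = (\lambda-\rho_\gamma)\,\bu^\top\bx|_\gamma$, and since $\bu>0$, $\bx|_\gamma>0$, and $\bu^\top\bB_\gamma>0$, one gets $\lambda-\rho_\gamma>0$ directly. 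Alternatively, observe that $M$ restricted to the support $S$ has $\bx|_S>0$ as an eigenvector with eigenvalue $\lambda$, so $\lambda=\rho(M|_S)\ge\rho_\gamma$ for every $\gamma\subseteq S$, and then your argument excludes equality.

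Two further remarks on presentation. First, your ordering ``by directed distance to $\alpha$'' works, but what you actually need is any linear extension of $\succeq$ restricted to $S_\alpha$ with $\alpha$ first; saying it that way avoids ambiguity when there are multiple paths. Second, in the strict-positivity step you should note explicitly that for irreducible $M_\beta$ the matrix $\sum_{n\ge 0}(M_\beta/\lambda)^n$ has \emph{all} entries strictly positive (for each pair $(i,j)$ some power $M_\beta^n$ contributes), which is what turns a nonzero non-negative right-hand side into a strictly positive $\bv_\alpha|_\beta$.
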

We call $\bv_{\alpha}$ the {\it distinguished eigenvector} of
$M$ corresponding to $\alpha$.
\begin{definition}{Let $M\in \M_{k\times k}(\ZZ_+)$. The
{\it core} of $M$ is
$core(M)=\bigcap_{n\geq 1}M^n(\RR_+^k).$ }
\end{definition}
The next theorem can be found in \cite{TS1}. 
\begin{theorem}
\label{generated-core} Let $M$ be a non-negative integer square
matrix verifying (\ref{eq-irre}). Then  the core of $M$ is the
cone generated by the distinguished eigenvectors of $M$.
\end{theorem}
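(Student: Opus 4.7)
The equality is two inclusions. For the easy inclusion, any distinguished class $\alpha$ with $\rho_\alpha > 0$ satisfies $\bv_\alpha = M^n(\rho_\alpha^{-n}\bv_\alpha) \in M^n(\RR_+^k)$ for every $n \ge 1$, so $\bv_\alpha \in core(M)$; since $core(M)$ is a convex cone it then contains the cone generated by the $\bv_\alpha$'s.

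For the reverse inclusion the plan is to induct on the number $\ell$ of irreducible components of $M$. The base case $\ell = 1$ treats primitive $M$ (the case $M=[0]$ being trivial). Let $\lambda > 0$ be the Perron eigenvalue with right eigenvector $\bv > \b0$ and left eigenvector $\bu > \b0$ normalized by $\bu^\top \bv = 1$; the spectral gap provides $\RR^k = \RR \bv \oplus W$ with $W = \ker \bu^\top$ and $\|M^n|_W\| \le C\rho^n$ for some $\rho < \lambda$. For $\by = M^n\bx_n \in core(M)$ with $\bx_n \ge \b0$, I would decompose $\bx_n = c_n\bv + \bz_n$, $\bz_n \in W$; the constancy of $\by = \lambda^n c_n \bv + M^n \bz_n$ forces $c_n = \bu^\top \by / \lambda^n \to 0$, and since $\bx_n \ge \b0$ with $\bu > \b0$ and $\bu^\top \bz_n = 0$, this bounds each entry of $\bz_n$ in a window of width $O(\lambda^{-n})$, so $\bz_n \to \b0$. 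The spectral gap then gives $M^n\bz_n \to \b0$ and $\by = (\bu^\top \by)\bv \in \RR_+\bv$, a non-negative multiple of the unique distinguished eigenvector.

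For the inductive step with $\ell \ge 2$, place $M$ in block upper triangular form and single out a minimal irreducible class $\alpha^*$ (no outgoing edges to other classes) as the last block, giving
\[
M = \begin{pmatrix} M' & B \\ 0 & M^* \end{pmatrix}
\]
with $M^*$ primitive or $[0]$. Writing $\by = (\by', \by^*) \in core(M)$, the block-triangular structure yields $\by^* \in core(M^*)$, so $\by^* = c\bv^*$ for the Perron eigenvector $\bv^*$ of $M^*$ by the base case. There are two subcases according to whether some distinguished class $\gamma$ of $M$ satisfies $\gamma \succeq \alpha^*$ with $\rho_\gamma = \rho_{\alpha^*}$. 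In the affirmative subcase (which covers $\alpha^*$ itself distinguished), applying the bottom block of $M\bv_\gamma = \rho_\gamma \bv_\gamma$ shows that $\bv_\gamma|_{\alpha^*}$ is a positive scalar multiple of $\bv^*$, so a suitable multiple of $\bv_\gamma$ cancels the $\alpha^*$-block of $\by$. In the negative subcase every accessing $\beta \neq \alpha^*$ has $\rho_\beta > \rho_{\alpha^*}$, and expanding
\[
M^n = \begin{pmatrix} (M')^n & \sum_{j=0}^{n-1}(M')^{n-1-j} B (M^*)^j \\ 0 & (M^*)^n \end{pmatrix}
\]
together with the base-case estimate $\bx_n^* = O(\rho_{\alpha^*}^{-n})$ and a growth-rate comparison inside $M'$ forces $c = 0$. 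In both subcases $\by$ reduces to a vector with vanishing $\alpha^*$-block, to which the inductive hypothesis applies on $M'$; distinguished eigenvectors of $M'$ extend by zero on $\alpha^*$ to distinguished eigenvectors of $M$, completing the induction.

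The main obstacle will be the non-negativity of the residual $\by - c\bv_\gamma$: since $\bv_\gamma$ is supported on every vertex with access to $\gamma$, its upper block can a priori have entries larger than the corresponding entries of $\by'$, so the naive subtraction need not stay in $\RR_+^k$. I would circumvent this by reformulating the induction in terms of extremal rays: $core(M)$ is a closed convex cone in $\RR_+^k$, provably polyhedral because each $M^n(\RR_+^k)$ is polyhedral and the intersection stabilizes in finite dimension, hence equals the conic hull of its extremal rays, and it suffices to show that every extremal ray is spanned by a distinguished eigenvector. A cycle argument for the $M$-action on the finite set of extremal rays, combined with Theorem \ref{Perron-Frobenius}, should identify these rays with precisely the distinguished eigenvectors, sidestepping the non-negativity difficulty entirely. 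This is essentially the technical content of the Tam--Schneider analysis in \cite{TS1}.
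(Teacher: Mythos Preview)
The paper does not prove this theorem; it simply quotes \cite{TS1}. So your proposal is not competing with an argument in the paper but rather attempting to supply one where the paper gives none.

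Your easy inclusion and your base case (primitive $M$) are correct. The inductive scheme is a reasonable outline, and you rightly identify the non-negativity obstruction in subtracting $c\,\bv_\gamma$.

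However, the workaround you propose has a genuine gap. The claim that the nested sequence $M^n(\RR_+^k)$ ``stabilizes in finite dimension'' is false: already for the primitive matrix $M=\left(\begin{smallmatrix}2&1\\1&2\end{smallmatrix}\right)$ the cones $M^n(\RR_+^2)$ form a strictly decreasing sequence whose intersection is the Perron ray, yet no single $M^n(\RR_+^2)$ equals that ray. Polyhedrality of $core(M)$ is true, but it is one of the substantive results of \cite{TS1}, not a consequence of a dimension count. Your subsequent ``cycle argument'' on extremal rays is then too vague to assess: $M$ does map $core(M)$ into itself, and in fact onto (this needs an argument), but turning this into a permutation of finitely many extremal rays, and identifying each fixed ray with a \emph{distinguished} eigenvector via Theorem~\ref{Perron-Frobenius}, is exactly where the work lies. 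Since you close by deferring to \cite{TS1} anyway, the proposal does not in the end improve on the paper's one-line citation. If you want a self-contained argument, the inductive route can be salvaged, but you must prove directly that an element of $core(M)$ whose restriction to a minimal block is nonzero dominates the appropriate multiple of the corresponding $\bv_\gamma$; this is the step you have not supplied.
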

\subsection{Core of the substitution matrix.}
Let $\A$ be a finite set of prototiles and  let $\omega:\A\to
\A^+$ be a substitution with expansion map $\varphi:\RR^d\to
\RR^d$ and substitution matrix $M_{\omega}=M\in\M_{\A\times
\A}(\ZZ_+)$. (Here we abuse notation a little and label the rows
and columns of $M$ by the elements of the prototile set $\A$.)
Suppose that $\alpha_1,\cdots,\alpha_l$ are the equivalence
classes (irreducible components) for the matrix $M$. For every
$1\leq i\leq l$ we denote by $M_i$ the restriction of $M$ to the
class $\alpha_i$. Let $X_1,\cdots, X_m$ be the minimal components
of $X_{\A,\omega}$, and let $\A_i\subseteq \A$ be the minimal
subset such that $X_i\subseteq X_{\A_i}$. As already mentioned, we
may assume without loss of generality that $X_i=X_{\A_i,\omega}$
and $\omega|_{\A_i}:\A_i\to \A_i^{+}$ is primitive, for every
$1\leq i\leq m$. Observe that for every $1\leq i\leq m$, the set
of prototiles $\A_i$ is equal to an equivalence class $\alpha_i$
of $M$. The restriction $M_i$ of $M$ to $\A_i$ is the matrix
associated to the substitution $\omega|_{\A_i}$. The equivalence
classes of the matrix $M$ can be ordered in such a way that $M$
has the block upper-triangular form
$$
M= \left(
  \begin{array}{ccccccc}
      M_1 & 0 & \cdots & 0 & \ast & \cdots & \ast \\
          0 & M_2 & \cdots & 0 & \ast  & \cdots & \ast \\
          \vdots & \vdots & \ddots & \vdots & \vdots & \cdots& \vdots \\
              0 & 0 & \cdots & M_m & * & \cdots & * \\
              0 & 0 & \cdots & 0 & M_{m+1} & \cdots & \ast \\
                  \vdots & \vdots & \cdots & \vdots & \vdots & \ddots & \vdots \\
                  0 & 0 & \cdots & 0 & 0 & \cdots & M_l \\
                    \end{array}
                \right)
$$
where $\ast$ stand for arbitrary non-negative integer matrices. Observe that for each  $m+1 \le i \le l$ there must be at least one non-zero off-diagonal
block, because those $M_i$ correspond to non-minimal prototile subsets.

The number $\lambda_0=|\det(\varphi)|$ is a left eigenvalue of $M$
with $\bv_0=(\vol(A))_{A\in \A}\in\RR_+^{\A}$ as an associated
eigenvector. Indeed,  the $A$-coordinate of $\bv_0^TM$ is
equal to $\vol(\omega(A))$ and $\vol(\omega(A))=\lambda_0\vol(A)$,
for every $A\in\A$ by (\ref{def-sub}).
The next proposition shows that $\lam_0$ is the unique distinguished eigenvalue of $M_{\omega}$ and characterizes the core of $M_{\omega}$.
\begin{proposition}
\label{core} Let $M\in \M_{\A\times\A}(\ZZ_+)$ be the substitution
matrix associated to the substitution $\omega:\A\to\A^+$ with
expansion map $\varphi:\RR^d\to\RR^d$. Assume that (\ref{eq-irre}) holds. Let $X_1, \cdots, X_m$ be
the minimal components of $X_{\A,\omega}$, and let $\A_i$ be the corresponding prototile sets, so that $X_i = X_{\A_i}$. Then
\begin{enumerate}
\item[{(i)}] $\lam_0 = |\det(\varphi)|$ is the unique distinguished eigenvalue of $M$.
\item[{(ii)}]
$\A_1,\cdots, \A_m$ are all the different distinguished
classes of $M$, and $\rho(M_i)=|\det(\varphi)|$ for every $1\leq
i\leq m$.
\item[{(iii)}]
Let $1\leq i\leq m$. If $\bv_{i}$ is the distinguished eigenvector
of $M$ corresponding to the class $\A_i$, then
$\bv_i(p)>0$ if and only if $p\in \A_i$.
\item[{(iv)}] $core(M)$ is the cone generated by the vectors
$\bv_1,\cdots, \bv_m$.
\end{enumerate}
\end{proposition}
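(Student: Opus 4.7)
The plan is to leverage the fact, already noted in the text, that $\bv_0 = (\vol(A))_{A\in\A}$ is a strictly positive left eigenvector of $M$ with eigenvalue $\lam_0 = |\det(\varphi)|$, combined with the block upper-triangular structure and the primitivity of every non-zero diagonal block $M_i$ guaranteed by (\ref{eq-irre}). The whole argument reduces to two estimates: (a) $\rho(M_i) = \lam_0$ for each $1\le i\le m$, and (b) $\rho(M_j) \le \lam_0$ for every irreducible class $\A_j$. For (a), since $\A_i$ is a maximal component we have $\om(A)\subseteq \A_i^+$ for all $A\in \A_i$, so the identity $\vol(\om(A))=\lam_0\vol(A)$ restricts to $\vol|_{\A_i}^T M_i = \lam_0\,\vol|_{\A_i}^T$; primitivity of $M_i$ and positivity of $\vol|_{\A_i}$ then force $\lam_0=\rho(M_i)$ by the classical Perron-Frobenius theorem. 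For (b), for a general $A\in\A_j$ the identity $\lam_0\vol(A)=\sum_{A'\in\A} M(A',A)\vol(A')$ splits into contributions from $A'\in\A_j$ and from $A'\notin\A_j$; dropping the non-negative second sum yields the componentwise inequality $\vol|_{\A_j}^T M_j \le \lam_0\,\vol|_{\A_j}^T$, and the Collatz-Wielandt characterization of the spectral radius gives $\rho(M_j)\le\lam_0$.

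Parts (i) and (ii) follow at once. Each maximal class $\A_i$, $i\le m$, is automatically distinguished (no other class has access to it) and $\rho(M_i)=\lam_0$ by (a). Conversely, for a non-maximal $\A_j$, following the $\succeq$-relation upward (and using finiteness of the set of classes) one reaches some maximal $\A_k \succeq \A_j$ with $k\le m$; then $\rho(M_k)=\lam_0 \ge \rho(M_j)$ by (b), so $\A_j$ fails the strict inequality required in the definition of distinguished class. Hence $\A_1,\dots,\A_m$ are all the distinguished classes and $\lam_0$ is the unique distinguished eigenvalue. Part (iii) is then immediate from Theorem~\ref{Perron-Frobenius}(ii): $\bv_i(p)>0$ iff $\A_i$ is accessible from $p$, and since no class other than $\A_i$ itself has access to the maximal class $\A_i$, this forces $p\in \A_i$. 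Part (iv) follows from Theorem~\ref{generated-core} combined with (ii).

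The only place requiring genuine care is the Collatz-Wielandt step in (b): one must check that discarding the off-block terms preserves enough information to bound the spectral radius of $M_j$ itself, rather than just of $M$. Once this point and the direction convention for the access relation $\succeq$ are internalized, the remainder is bookkeeping built on top of the volume-vector observation, which already pins $\lam_0$ down as the Perron eigenvalue of every minimal diagonal block.
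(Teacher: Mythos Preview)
Your proof is correct and follows essentially the same architecture as the paper's: both use the volume vector $\bv_0$ to pin down $\rho(M_i)=\lam_0$ for the minimal blocks, then show the remaining classes are not distinguished because some $\A_k$ with $k\le m$ has access to them with $\rho(M_k)=\lam_0\ge\rho(M_j)$. The only difference is in establishing the bound $\rho(M_j)\le\lam_0$ for $j>m$: the paper cites a theorem of Gantmacher (using that $M^T$ has a strictly positive eigenvector) to obtain the \emph{strict} inequality $\rho(M_j)<\lam_0$, whereas you give a self-contained Collatz--Wielandt argument yielding the weak inequality, which, as you correctly observe, already suffices to defeat the distinguished condition. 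Your route is slightly more elementary and avoids the external reference; the paper's route yields the marginally stronger intermediate fact $\rho(M_j)<\lam_0$ (which is in fact used later in the paper, e.g.\ in Lemma~\ref{lem-asymp} and in Section~5), so it is worth noting that strictness does eventually matter elsewhere even though it is not needed for the present proposition.
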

\begin{proof}
We know that $M_i$, for $i=1,\ldots,m$,
is the substitution matrix for the primitive tile substitution $\om|_{\A_i}$, hence $\rho(M_i)$ are distinguished eigenvalues of $M$. However,
$\rho(M_i) = \rho(M_i^T) = |\det(\varphi)|=\lam_0$ for $i=1,\ldots,m$, by the definition of tile substitution (\ref{def-sub}).
On the other hand, the fact that $M^T$ has a strictly positive eigenvector implies, by \cite[Theorem 13.4.6]{gant} that $\rho(M_i) = \rho(M_i^T)< \rho(M^T) =\rho(M)$
for $i=m+1,\ldots,l$.
(This is also easy to see directly: every non-minimal class $i$ ``loses entropy'' under the substitution, in view of $\om(\A_i) \not \seq \A_i$.) Therefore, the classes
$i=m+1,\ldots,l$ are not distinguished by definition. Now all the claims of the proposition immediately follow from Theorem~\ref{Perron-Frobenius}
and Theorem~\ref{generated-core}.
\end{proof}
\subsection{Invariant measures and transverse measures.}\label{transversal}
Let $(X,\RR^d)$ be a tiling dynamical system (it need not be a substitution system, although for our purposes we can take $X = \Xa$).
An {\it invariant} measure of
$(X,\RR^d)$ is a measure $\mu:\B(X)\to \overline{\RR}_+$ such that
$\mu(U)=\mu(U-\bv)$, for every $U\in \B(X)$ and
$\bv\in\RR^d$.\\
\indent Recall that we consider every prototile $A\in\A$ centered
at $\b0$. The {\it center of the tile} $t\in\T\in X$ is the point
$\bx_t\in\RR^d$ for which there exists $A\in\A$ such that
$t=A+\bx_t$. Let $\eta>0$ be such that the interior of
the support of $A$ contains the closure of $B_\eta(\b0)$, for every $A\in\A$.\\
\indent
By the {\it transversal} of $X$ we mean the set $\Gamma\subseteq
X$ of all the tilings $\T$ in $X$ for which there exists a
prototile $A\in \A$ such that $A\in \T$. In other words, if
$L_{\T}\subseteq\RR^d$ is the set of all the points $\bx\in
\RR^d$ for which there exist $\T\in X$ and $A\in\A$ such that
$A+\bx$ is a tile of $\T$,  then
$$
\Gamma=\{ \T\in X: \b0\in L_{\T} \}.
$$
\indent We say that a patch $P$ is {\it $X$-admissible} if there
exists $\T\in X$ such that $P\subseteq \T$. We denote by
$\Lambda_X$ the collection of all the $X$-admissible patches $P$
for which there exists a prototile $A\in\A$ such that $A\in P$. In
other words, this is the collection of patches such that $\b0\in
\Int(\supp(P))$ and $\b0$ coincides with the center of some tile
$t\in P$. Every $X$-admissible patch is a translate of some patch
in $\Lambda_X$. For every $P\in\Lambda_X$ we  define
$$
C_{P}=\{\T\in X: P\subseteq \T \}.
$$
The sets $C_{P}$ are subsets of $\Gamma$.\\
\indent
Equipped with the induced topology, the space $\Gamma$  is compact
and totally disconnected, with a countable base of clopen sets (the
collection of the sets $C_P$ is a base of the topology). The
collection of Borel sets $\B(\Gamma)$ of $\Gamma$ is equal to
$\B(X)\cap\Gamma$.
\begin{remark}
{\rm If $U\in \B(\Gamma)$ and $\Theta\subseteq \RR^d$ is an open
set, then $U+\Theta\in \B(X)$. To verify that, observe that this is
true for the sets $C_P$, with $P\in \Lambda_X$. Verifying that the
collection $\M=\{U\in \B(\Gamma): U+\Theta\in \B(X)\}$ is a
$\sig$-algebra, we get the result. }\end{remark}
A {\it transverse measure } on $\B(\Gamma)$   is a measure $\nu:
\B(\Gamma)\to \overline{\RR}_+$ such that $\nu(A)=\nu(A-\bv)$,
for every  $A\subseteq \B(\Gamma)$ and $\bv\in\RR^d$ for which
$A-\bv\subseteq \Gamma$.\\
\indent
Tiling spaces have been studied as {\em laminations}, or {\em translation surfaces}, see \cite{BG,BBG}.
Our definition agrees with the notion of transverse measure for laminations.
There is a one-to-one correspondence
between finite invariant measures and finite transverse measures (see \cite[Section 5]{BBG}), however, in all the existing literature it is assumed that the tiling system is minimal. We need this correspondence in the non-minimal case, and also for $\sig$-finite positive measures.
Therefore, we include details about this in the Appendix (Section \ref{Apendix A}) for the reader's convenience.
If $\mu$ is an invariant measure, we denote by $\mu^T$ the associated transverse measure.
\section{Finite invariant measures.}\label{finmeas}
\begin{theorem} \label{th-finite} Let $(X_{\A,\om},\R^d)$ be a self-affine tiling dynamical system. Then
all finite invariant measures are supported on the minimal components.
\end{theorem}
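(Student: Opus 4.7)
By ergodic decomposition it suffices to treat ergodic measures, so fix an ergodic invariant probability $\mu$ on $\Xa$. The strategy is to apply the pointwise ergodic theorem to extract tile-frequencies, use Proposition~\ref{core} to conclude that non-minimal prototiles have zero frequency (hence do not appear $\mu$-a.e.), and then invoke an orbit-escape argument to rule out ``mixed'' tilings.

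By Birkhoff's pointwise ergodic theorem for the amenable group $\R^d$, for $\mu$-a.e.\ $\Tk$ and every bounded Borel $g$, $\vol(B_R)^{-1}\int_{B_R}g(\Tk-\bx)\,d\bx \to \int g\,d\mu$, so each prototile $A$ has a $\mu$-a.e.\ constant frequency $f_A$. I claim $f_A=0$ whenever $A\in\A_j$ with $j>m$. Any ball-patch $[B_R]^\Tk$ is legal, so contained (up to translation) in $\om^{n(R)}(A^{(R)})$ for some $A^{(R)}\in\A$ with $\lam_0^{n(R)}\vol(A^{(R)}) \asymp \vol(B_R)$; the number of $A$-tiles in $\om^{n(R)}(A^{(R)})$ is $(M^{n(R)}\mathbf{e}_{A^{(R)}})_A$. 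By Proposition~\ref{core}, $\lam_0=|\det(\varphi)|$ is a semi-simple eigenvalue of $M$ whose eigenspace is spanned by the distinguished eigenvectors $\bv_1,\ldots,\bv_m$, each supported on $\A_i$ for $i\le m$; all other eigenvalues have strictly smaller modulus (the non-minimal blocks $M_k$ satisfy $\rho(M_k)<\lam_0$). Hence $M^n/\lam_0^n$ converges to the spectral projection onto this eigenspace, which annihilates non-minimal coordinates, so $(M^n\mathbf{e}_B)_A=o(\lam_0^n)$ uniformly in $B\in\A$. Dividing by $\vol(B_R)\asymp\lam_0^{n(R)}$ and passing to a subsequence along which $A^{(R)}$ is constant (possible since $\A$ is finite) gives $f_A=0$. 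Translation invariance together with a countable dense cover of $\R^d$ then yields $\mu(\{\Tk:\Tk \text{ has a tile of type }A\})=0$ for each non-minimal $A$, so $\mu$-a.e.\ $\Tk$ has only minimal tiles.

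Finally, I rule out ``mixed'' tilings to conclude. Each $X_i$ is closed and $\R^d$-invariant, so $\mu(X_i)\in\{0,1\}$ by ergodicity; if some $\mu(X_i)=1$, we are done. Otherwise $\mu$ concentrates on the set $Z$ of tilings with tiles from at least two distinct minimal classes (and no non-minimal tiles). A $\Tk\in Z$ cannot be periodic: its orbit would then be a compact torus, a minimal invariant set disjoint from all $X_i$, contradicting Lemma~\ref{lem-minim}(i). Hence the stabilizer of $\Tk$ has rank strictly less than $d$; by Lemma~\ref{lem-minim}(ii), translating $\Tk$ in directions transverse to the stabilizer drives the translates arbitrarily close to $\bigcup_{i=1}^m X_i$. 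Applying Birkhoff to $f(\Tk):=\dist(\Tk,\bigcup_i X_i)$, a Fubini-type estimate then shows $\vol(B_R)^{-1}\int_{B_R}f(\Tk-\bx)\,d\bx\to 0$ for $\mu$-a.e.\ $\Tk\in Z$, giving $\int f\,d\mu=0$ and hence $f\equiv 0$ $\mu$-a.e., i.e., $\mu$-a.e.\ $\Tk\in\bigcup_i X_i$, contradicting $\mu(\bigcup_i X_i)=0$. The main obstacle is this last orbit-escape step: one must quantitatively bound the volume of $\{\bx\in B_R:f(\Tk-\bx)\geq\varepsilon\}$ as $o(R^d)$, combining the accumulation from Lemma~\ref{lem-minim}(ii) with the non-compactness of the stabilizer quotient.
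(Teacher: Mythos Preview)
Your first step---showing that every prototile $A\in\A'$ has zero frequency---is essentially sound and matches the paper's Lemma~3.5, though your claim that $[B_R]^\Tk$ sits inside some $\om^{n(R)}(A^{(R)})$ with $\lam_0^{n(R)}\asymp\vol(B_R)$ is not justified: legality gives \emph{some} $n$, with no upper control. The paper sidesteps this by averaging over $F_n=\varphi^n(B_1(\b0))$ rather than round balls, and by writing $\Tk=\om^n(\Tk')$ via surjectivity (Lemma~\ref{onto}), so that $[F_n]^\Tk\subseteq\om^n([B_1(\b0)]^{\Tk'})$ with the inner patch of uniformly bounded cardinality. This is a minor repair.

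The genuine gap is your second step. You acknowledge it yourself: the orbit-escape argument via $f(\Tk)=\dist(\Tk,\bigcup_i X_i)$ is only a sketch, and the missing quantitative bound $\vol\{\bx\in B_R:f(\Tk-\bx)\ge\varepsilon\}=o(R^d)$ is the entire content. Lemma~\ref{lem-minim}(ii) gives only that the orbit closure contains certain $X_i$, not that most of the orbit is close to them. Unwinding the metric, $f(\Tk-\bx)\ge\varepsilon$ means precisely that the patch $[B_{1/\varepsilon}(\bx)]^\Tk$ is not $X_i$-admissible for any $i$; so what you need is that such ``mixed'' patches have zero frequency. The paper proves exactly this (Lemma~3.6), but by a concrete mechanism you have not supplied: writing $\Tk=\om^n(\Tk')$ and observing that a patch $P$ with only minimal tiles but not $X_i$-admissible for any $i$ must meet the boundary of some supertile $\om^\ell(t)$ with $t\in\A\setminus\A'$. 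One then splits the count of $P$-occurrences into (a) those inside $\om^\ell(t)$ with $t\in\A'$, controlled by the first step, and (b) those near $\partial(\supp\,\om^\ell(t))$ with $t\notin\A'$, controlled by the van Hove property of $\{\om^\ell(t)\}_{\ell\ge 1}$. Without this boundary estimate (or an equivalent), your distance-function argument does not close; and once you have it, the detour through $f$ is unnecessary, since $\mu(C_P+\Theta)=0$ for all such $P$ already forces $\mu$ onto $\bigcup_i X_i$.
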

\noindent {\em Scheme of the proof.}
Let $\mu$ be a finite invariant measure. We can assume that $\mu$ is ergodic. It is easy to see that the sets
$C_P$, where $P$ is an admissible patch,
generate the Borel $\sig$-algebra on $\Gam$. Therefore, the Borel $\sig$-algebra on $\Xa$ is generated by the sets $C_P+\Theta$, with  $\Theta\seq B_r(\b0)$, and their translates. The claim will follow once we show
that $\mu(C_P+\Theta)=0$ for every patch $P$ which does not occur in $\bigcup_{i=1}^m X_i$, the union of
minimal components. To this end, we will use the pointwise ergodic theorem and show that the frequency of such a patch $P$ in any tiling $\T\in X_{\A,\om}$ equals zero.
We have two cases to consider: (a) $P$ contains a tile from $\Ak':= \Ak \setminus
\bigcup_{i=1}^m \Ak_i$; (b) $P$ contains tiles from two distinct minimal components.
The following example shows why case (b) is needed.
\begin{example}\label{ex3}{\em
All the tiles have the unit square as its support and are distinguished only by the labels. Let $\Ak = \left\{\begin{tabular}{|c|} \hline 0 \\ \hline \end{tabular}\,,
\begin{tabular}{|c|} \hline 1 \\ \hline \end{tabular}\,,\begin{tabular}{|c|} \hline 2 \\ \hline \end{tabular}\right\}$; the substitution $\om$ is given by
$$
\begin{tabular}{|c|} \hline 0 \\ \hline \end{tabular}\ \to\  \begin{tabular}{|c|c|c|c|} \hline 0 & 0 & 0 & 0 \\ \hline 0 & 0 & 1 & 0 \\ \hline 0 & 1 & 2 & 0 \\ \hline 0 & 0 & 0 & 0
\\ \hline \end{tabular}  \,, \ \ \ \ \ \ \begin{tabular}{|c|} \hline 1 \\ \hline \end{tabular}\  \to\ {\begin{tabular}{|c|} \hline 1 \\ \hline \end{tabular}}^{\,\,4\times 4}\,,
\ \ \ \ \ \ \begin{tabular}{|c|} \hline 2 \\ \hline \end{tabular}\  \to\ \begin{tabular}{|c|} \hline 2 \\ \hline \end{tabular}^{\,\,4\times 4}.
$$
where ${\begin{tabular}{|c|} \hline a \\ \hline \end{tabular}}^{\,\, k\times k}$ stands for the $k\times k$ square filled with identical prototiles labelled $a$.
We have two minimal components, corresponding to the prototiles labelled $1$ and $2$. The tiling space $X_{\A,\om}$ contains tilings which have a half-plane filled by $1$'s and another
half-plane filled by $2$'s. As we show, such tilings have zero measure for any finite invariant measure $\mu$.
}
\end{example}

\medskip

Now we turn to the details.
We use the pointwise ergodic theorem for $\R^d$-actions, which was first proved by Wiener \cite{wiener}, with averaging over balls centered at the origin. For us, another
averaging sequence is more convenient. Let
\be \label{eq-vanh}
F_n:= \varphi^n(B_1(\b0)).
\ee
It is well-known that the pointwise ergodic theorem holds with $F_n$ used instead of the balls (see e.g.\ \cite{tempel,chatard}).
\begin{theorem} \label{thm-pet}
(Pointwise Ergodic Theorem for $\R^d$-actions) Let $\mu$ be an ergodic invariant probability measure for the system $(\Xa,\R^d)$. Then for any $f\in L^1(X_{\A,\om},\mu)$,
\be \label{th-pet}
\frac{1}{\vol(F_n)} \int_{F_n} f(\T-\bg)\,d\bg \to \int f(\S)\,d\mu(\S),\ \
\mbox{as}\ n\to \infty,
\ee
for $\mu$-a.e.\ $\T \in X_{\A,\om}$.
\end{theorem}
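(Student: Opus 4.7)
The plan is to reduce the claim to the standard pointwise ergodic theorem for measure-preserving $\R^d$-actions along tempered F{\o}lner sequences (Tempelman, Lindenstrauss; this is the content of the cited \cite{tempel,chatard}). Since $\R^d$ is already the acting group, the entire task is to verify that $\{F_n\}=\{\varphi^n(B_1(\b0))\}$ is both F{\o}lner and satisfies the Tempelman--Shulman regularity bound.

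Regularity is essentially free. Because $\varphi^n$ is linear,
$$
F_n-F_n \;=\; \varphi^n\bigl(B_1(\b0)-B_1(\b0)\bigr) \;=\; \varphi^n(B_2(\b0)),
$$
so $\vol(F_n-F_n)=2^d\vol(F_n)$, giving Tempelman's bound with constant $C=2^d$. Shulman's tempering condition $\vol\bigl(\bigcup_{k<n}(F_k-F_n)\bigr)\le C\vol(F_n)$ follows once $F_k\seq F_n$ is known for $k<n$, since the union is then contained in $F_n-F_n$.

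To secure both $F_k\seq F_n$ and the F{\o}lner property, I would adopt a norm on $\R^d$ equivalent to the Euclidean one in which $\varphi$ is strictly expanding, i.e.\ $\|\varphi\bx\|\geq c\|\bx\|$ for some $c>1$. Such an adapted norm exists because the spectral radius of $\varphi^{-1}$ is strictly less than $1$; this is the standard Jordan-form / adapted-metric construction. In the adapted norm, $\varphi(B_1(\b0))\supset B_1(\b0)$ and therefore $F_n\seq F_{n+1}$; moreover $F_n$ contains a ball of radius $c^n$ and is contained in a ball of radius $\|\varphi\|^n$, both comparable to Euclidean balls. For fixed $\bx\in\R^d$, the symmetric difference $F_n\triangle(F_n+\bx)$ lies in an $\|\bx\|$-neighbourhood of $\partial F_n$, whose relative volume is $O(c^{-n})\to 0$; this is the F{\o}lner condition. (The sets $F_n$, and hence the Birkhoff averages, are independent of the choice of norm; the norm is merely an auxiliary device for the estimates.)

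The only real obstacle is bookkeeping around the word \emph{expansive}: in this paper it means only that the eigenvalues of $\varphi$ lie outside the unit circle, which does not by itself force $\|\varphi\bx\|>\|\bx\|$ in the ambient Euclidean norm. The adapted-norm step above resolves this. Once the F{\o}lner and tempering conditions are established, Tempelman's pointwise ergodic theorem delivers (\ref{th-pet}) for every $f\in L^1(X_{\A,\om},\mu)$ and $\mu$-a.e.\ $\T\in X_{\A,\om}$.
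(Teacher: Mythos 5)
Your proposal is correct and matches the paper's treatment of this statement: the paper offers no proof, simply asserting after (\ref{eq-vanh}) that the pointwise ergodic theorem is known to hold with the averaging sets $F_n=\varphi^n(B_1(\b0))$ in place of balls, with a citation to \cite{tempel,chatard}; you supply exactly the verification (F{\o}lner property plus the Tempelman/Shulman regularity bounds) that this citation presupposes. One small correction: your parenthetical claim that the sets $F_n$ are independent of the choice of norm is false --- $B_1(\b0)$ itself changes when you pass to the adapted norm, and with the Euclidean ball the nesting $F_k\seq F_{k+1}$ can genuinely fail for an expansive but non-expanding $\varphi$. This is harmless and easily patched: either take $B_1(\b0)$ to be the adapted-norm ball throughout (the paper itself adopts such a norm in Section 4, cf.\ (\ref{eq-norm})), or keep the Euclidean ball and verify Shulman's condition directly, using that $\varphi^{-m}(B_1(\b0))\seq B_R(\b0)$ for a uniform $R$ and all $m\ge 0$, whence $\bigcup_{k<n}(F_k-F_n)\seq \varphi^n(B_{R+1}(\b0))$, a set of volume $(R+1)^d\vol(F_n)$.
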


For a bounded set $F\seq \R^d$ and $r>0$ let
$$
F^{+r} := \{\bx\in \R^d:\ \dist(\bx,F) \le r\}.
$$
\begin{definition} \label{def-vanHove}
{A van Hove (F{\o}lner) sequence for $\R^d$ is a sequence $\{\Theta_n\}_{n\ge 1}$ of bounded measurable subsets of $\R^d$ satisfying
\begin{equation} \label{eq-vanHove}
\lim_{n\to \infty} \frac{\vol((\partial \Theta_n)^{+r})}{\vol(\Theta_n)} = 0,\ \ \mbox{for all}\ r>0.
\end{equation}
}
\end{definition}
It is easy to see that our sequence $\{F_n\}_{n\ge 1}$ defined in (\ref{eq-vanh}), is van Hove. Moreover, $\{\varphi^n(\supp(t))\}_{n\ge 1}$ is van Hove for any prototile $t\in \A_i,\ i\le m$.
Indeed, the latter follows from the fact that $\vol(\partial(\supp(t)))=0$ for a prototile in a minimal component, which was proved for primitive tile substitutions in \cite[Prop.1.1]{prag}.
\\
\indent
Given a set $F\seq \R^d$, patch $P$, and a tiling $\T$, denote by $N(F,P,\T)$ the number of patches in $\T$ equivalent to $P$ such that $\supp(P)\seq F$.
Note that for any set $F$,
\be \label{eq-puga}
N(F,P,\T) \le \delta^{-1}\vol(F),
\ee
where $\delta$ is the volume of a ball contained in the interior of every prototile.
For $\T\in
X_{\A,\omega}$ and a patch $P$, the {\it frequency} of $P$ in
$\T$ with respect to $F_n$ (which is our default averaging sequence) is defined by
$$
\freq_{\T}(P)=\lim_{n\to\infty}\frac{N(F_n,P,\T)}{\vol(F_n)}\,,
$$
whenever the limit exists.\\
\indent
For $f$ the indicator function of the set $C_P+\Theta$ and $\beta = \diam(P)+\diam(\Theta)$ we obtain
\begin{eqnarray*}
0\le \frac{1}{\vol(F_n)}\int_{F_n} f(\T - \bt)\,d\bt & \le & \frac{N((F_n)^{+\beta},P,\T)}{\vol(F_n)} \cdot \vol(\Theta) \\
& \le & \left( \frac{N(F_n,P,\T)}{\vol(F_n)} + \delta^{-1} \frac{\vol((\partial F_n)^{+\beta})}{\vol(F_n)} \right)\cdot \vol(\Theta)
\end{eqnarray*}
hence $\freq_{\T}(P)=0$ for $\mu$-a.e.\ $\T$ will imply $\mu(C_P+\Theta)=0$ by (\ref{th-pet}), in view of $\{F_n\}$ being van Hove.
\begin{lemma} \label{lem-asymp}
Let $M$ be the substitution matrix for a tile substitution with expansion map $\varphi$. Then
for every $A\in \A'$ and $B\in \A$,
$$
\lim_{n\to \infty} \frac{M^n(A,B)}{|\det(\varphi)|^n} = 0.
$$
\end{lemma}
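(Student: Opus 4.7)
My plan is to exploit the block-triangular structure of $M$ together with Proposition~\ref{core} to reduce the statement to a principal submatrix whose spectral radius is strictly less than $\lam_0 := |\det(\varphi)|$.

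First I would dispose of the trivial case $B \in \A_{\min} := \bigcup_{i=1}^m \A_i$. Since each class $\A_i$ with $i \le m$ is maximal in the partial order $\succeq$, we have $\om(\A_i) \seq \A_i^+$; iterating gives $\om^n(B) \seq \A_{\min}^+$, and hence $M^n(A,B) = 0$ for every $A \in \A'$ and every $n\ge 1$. So the statement holds vacuously in this case.

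The substantive case is $B \in \A'$. Here I would prove by induction on $n$ the identity
$$
M^n(A,B) \;=\; (M')^n(A,B),\qquad A,B \in \A',
$$
where $M' := M|_{\A'\times\A'}$ is the principal submatrix indexed by $\A'$. The inductive step uses the same observation from the previous paragraph in its ``transposed'' form: because $\om(C) \seq \A_{\min}^+$ for $C \in \A_{\min}$, we have $M(A,C) = 0$ whenever $A \in \A'$ and $C \in \A_{\min}$, so in the expansion $M^{n+1}(A,B) = \sum_C M(A,C)\,M^n(C,B)$ only the terms with $C \in \A'$ survive.

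To finish, note that $M'$ is itself block upper-triangular with diagonal blocks $M_{m+1},\ldots,M_\ell$, so
$$
\rho(M') \;=\; \max_{m< j\le \ell} \rho(M_j).
$$
By Proposition~\ref{core}, $\lam_0$ is realized as $\rho(M_i)$ only for $i\le m$, and consequently $\rho(M_j) < \lam_0$ for every $j > m$, giving $\rho(M') < \lam_0$. A standard Jordan-form (or Gelfand-type) estimate then yields $(M')^n(A,B) = O\bigl(n^{k}\,\rho(M')^n\bigr)$ for some integer $k\ge 0$, and dividing by $\lam_0^n$ gives the desired limit $0$. No single step is a genuine obstacle: the reduction $M^n = (M')^n$ on $\A'\times\A'$ is a short combinatorial induction, while the strict inequality on spectral radii is already encoded in Proposition~\ref{core}; the only thing one has to keep straight is the direction of the absorbing property of $\A_{\min}$ under $\om$, dictated by the paper's convention that the graph-theoretically ``maximal'' components $\A_1,\ldots,\A_m$ correspond to the dynamically minimal ones.
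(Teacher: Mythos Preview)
Your proof is correct and follows essentially the same route the paper sketches: both arguments rest on the block upper-triangular structure of $M$ together with Proposition~\ref{core} (specifically, the strict inequality $\rho(M_j)<\lam_0$ for $j>m$, which appears in the proof of that proposition). The paper compresses the argument into a citation of Schneider's Theorem~9.4, whereas you unpack it explicitly---reducing to the principal submatrix $M'$ on $\A'\times\A'$ and then applying the elementary Gelfand/Jordan bound $(M')^n=O(n^k\rho(M')^n)$---which has the advantage of being self-contained and not relying on the external reference.
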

\begin{proof}
This follows from the structure of the matrix $M$ and Proposition~\ref{core}, using that all classes in $\A'$ are non-distinguished.
A direct reference is \cite[Theorem 9.4]{S}.
\end{proof}
\begin{lemma} \label{lem-fre1} Let $A$ be a tile in $\A'$ and $\T\in \Xa$. Then $\freq_\T(A)=0$.
\end{lemma}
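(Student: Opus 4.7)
The plan is to leverage the surjectivity of the substitution on $\Xa$ (Lemma~\ref{onto}): for each $N\in\NN$ I can write $\T=\om^N(\T_N)$ for some $\T_N\in\Xa$. Every tile of type $A$ in $\T$ lies inside a unique ``super-tile'' $\om^N(t)$ with $t\in\T_N$, and the number of type-$A$ tiles appearing in $\om^N(t)$ equals $M^N(A,\ell(t))$, where $\ell(t)$ denotes the type of $t$. Since $A\in\A'$, the uniform decay $C_N/|\det(\varphi)|^N\to 0$, with $C_N:=\max_{B\in\A}M^N(A,B)$, furnished by Lemma~\ref{lem-asymp} will drive the frequency to zero once the counting is carried out carefully.

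Step by step: first, a super-tile $\om^N(t)$ meets $F_n$ if and only if $\supp(t)$ meets $\varphi^{-N}(F_n)=F_{n-N}$. So, letting $R_{\max}$ denote the maximum prototile diameter, every contributing $t$ satisfies $\supp(t)\seq F_{n-N}^{+R_{\max}}$, and the disjoint-interiors bound yields at most $\delta^{-1}\vol(F_{n-N}^{+R_{\max}})$ such tiles (with $\delta$ as in (\ref{eq-puga})). Since each source tile contributes at most $C_N$ tiles of type $A$, I obtain
$$
N(F_n,A,\T)\;\le\;\delta^{-1}\,C_N\,\vol(F_{n-N}^{+R_{\max}}).
$$
Dividing by $\vol(F_n)=|\det(\varphi)|^N\,\vol(F_{n-N})$ and invoking the van Hove property of $\{F_n\}_{n\ge 1}$ at the (fixed) parameter $R_{\max}$ gives
$$
\limsup_{n\to\infty}\,\frac{N(F_n,A,\T)}{\vol(F_n)}\;\le\;\frac{\delta^{-1}\,C_N}{|\det(\varphi)|^N}.
$$
Letting $N\to\infty$ and applying Lemma~\ref{lem-asymp} then forces $\freq_\T(A)=0$.

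The only delicate point is the order of quantifiers: $N$ must be chosen first, to pin down the decay factor $C_N/|\det(\varphi)|^N$, and only then is $n$ sent to infinity, so that the inflation $R_{\max}$ stays absolute relative to the van Hove averaging. With that ordering, the boundary effects contribute a factor tending to $1$ while the interior contribution is arbitrarily small, and the argument closes without further effort.
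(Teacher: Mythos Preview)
Your proof is correct and rests on the same two ingredients as the paper's: surjectivity of $\om$ (Lemma~\ref{onto}) to pull $\T$ back to a preimage, and Lemma~\ref{lem-asymp} to kill the ratio $M^N(A,B)/|\det\varphi|^N$. The difference is purely in the bookkeeping. You decouple the substitution depth $N$ from the averaging index $n$, which forces you to count the super-tiles meeting $F_n$ via a volume bound and then invoke the van Hove property to absorb the boundary layer $F_{n-N}^{+R_{\max}}\setminus F_{n-N}$. The paper instead sets $N=n$: since $\varphi^{-n}(F_n)=B_1(\b0)$, the patch $[B_1(\b0)]^{\T'}$ consists of at most $K$ tiles (a uniform FPC bound), and one obtains directly
\[
\frac{N(F_n,A,\T)}{\vol(F_n)}\;\le\;\frac{\sum_{i=1}^K M^n(A,t_i)}{|\det\varphi|^n\,\vol(B_1(\b0))},
\]
with no boundary term at all. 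Your two-parameter argument is a bit more robust---it would go through for any van Hove sequence satisfying $\varphi^{-N}(F_n)=F_{n-N}$---while the paper's single-parameter version is shorter precisely because it exploits the special form $F_n=\varphi^n(B_1(\b0))$.
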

\begin{proof}
Fix $n\in \NN$. Recall that $\om:\ \Xa\to \Xa$ is onto, hence there exists $\T'\in \Xa$ such
that $\om^n(\T') = \T$. Observe that
$$
F_n= \varphi^n(B_1(\b0)) \seq \supp (\om^n([B_1(\b0)]^{\T'})).
$$
Let $[B_1(\b0)]^{\T'} = \{t_1+\bx_1,\ldots,t_k+\bx_k\}$ where $t_i$ are prototiles (not necessarily distinct) and $\bx_i\in \R^d$. Note that $k\le K$ where $K$ is a uniform constant
by the FPC property.
We have
$$
\frac{N(F_n,A,\T)}{\vol(F_n)} \le \frac{\sum_{i=1}^k N(\om^n(t_i + \bx_i),A,\T)}{\vol(F_n)} =  \frac{\sum_{i=1}^k M^n(A,t_i)}{|\det(\varphi)|^n \,\vol(B_1(\b0))}\,,
$$
and the claim follows from Lemma~\ref{lem-asymp}.
\end{proof}
\begin{lemma} \label{lem-fre2} Let $P$ be an $\Xa$-admissible patch such that all its prototiles belong to $\A\setminus \A'$ but $P$ is not admissible for any of the minimal components
$X_i$. Then $\freq_\T(P)=0$ for any $\T\in \Xa$.
\end{lemma}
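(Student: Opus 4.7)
The plan is to bound $N(F_n,P,\T)$ above by $o(|\det\varphi|^n)$, which immediately gives $\freq_\T(P)=0$. I would pull $\T$ back to a high level of the substitution hierarchy: for a fixed integer $k_0\ge 1$ (eventually sent to $\infty$), Lemma~\ref{onto} produces $\T^{(k_0)}\in\Xa$ with $\om^{k_0}(\T^{(k_0)})=\T$, so the super-tile patches $\{\om^{k_0}(s):s\in\T^{(k_0)}\}$ partition $\T$. Each copy $P'$ of $P$ with $\supp(P')\seq F_n$ then falls into one of two mutually exclusive cases: (A) $P'$ lies inside a single super-tile $\om^{k_0}(s)$, or (B) $P'$ straddles at least two super-tiles. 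In case (A), $s$ cannot lie in any minimal class $\A_j$, for otherwise $\om^{k_0}(s)\seq\A_j^+$ would be a patch in $X_j$ and $P\seq\om^{k_0}(s)$ would be admissible in $X_j$, contradicting the hypothesis. Hence $s\in\A'$.

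For the (A)-count, the condition $\supp(P')\seq F_n$ forces $s\in[F_{n-k_0}]^{\T^{(k_0)}}$, and each $\om^{k_0}(s)$ contains at most a constant number $c_{k_0}$ of copies of $P$. Applying Lemma~\ref{lem-fre1} to $\T^{(k_0)}\in\Xa$ and to each $A\in\A'$, the number of $\A'$-tiles of $\T^{(k_0)}$ meeting $F_{n-k_0}$ is $o(|\det\varphi|^{n-k_0})$, so the (A)-contribution is at most $c_{k_0}\cdot o(|\det\varphi|^{n-k_0})=o_{k_0}(|\det\varphi|^n)$. For the (B)-count, fix a distinguished tile $A_0$ of $P$; its image in $P'$ must lie within distance $\diam(P)$ of $\partial\om^{k_0}(s)$ for some $s\in[F_{n-k_0}]^{\T^{(k_0)}}$, so the (B)-count is at most $\delta^{-1}\vol(W_n)$, with $W_n=\bigcup_s(\partial\om^{k_0}(s))^{+\diam(P)}$. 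I split the sum defining $\vol(W_n)$ by the class of $s$: for minimal $s\in\A_j$, $j\le m$, the van Hove property of $\{\om^{k_0}(s)\}_{k_0}$ yields $\vol((\partial\om^{k_0}(s))^{+\diam P})\le\epsilon_{k_0}|\det\varphi|^{k_0}$ with $\epsilon_{k_0}\to 0$ as $k_0\to\infty$, and combined with the $O(|\det\varphi|^{n-k_0})$ bound on the count of such $s$ this contributes $O(\epsilon_{k_0})\,|\det\varphi|^n$; for $s\in\A'$ I use only the trivial bound $|\det\varphi|^{k_0}\vol(s)$, but the number of such $s$ is $o(|\det\varphi|^{n-k_0})$ by the same application of Lemma~\ref{lem-fre1}, contributing $o_{k_0}(|\det\varphi|^n)$.

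Combining (A) and (B), dividing by $\vol F_n$ (which is of order $|\det\varphi|^n$), sending $n\to\infty$ for fixed $k_0$ and then letting $k_0\to\infty$, yields $\freq_\T(P)=0$. The main obstacle I anticipate is the straddle estimate in case (B) when $s$ is non-minimal, where the van Hove property is unavailable (Praggastis' argument for $\vol(\partial\supp t)=0$ is stated only for primitive substitutions in the paper); this is precisely why a single-level pullback to $\om(\T^{(1)})$ is insufficient, and the remedy is to absorb the non-minimal contribution by invoking Lemma~\ref{lem-fre1} once more to say that such tiles are themselves sparse at level $k_0$.
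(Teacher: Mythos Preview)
Your proof is correct and follows essentially the same approach as the paper: decompose $\T$ into super-tiles at a fixed level $k_0$ (the paper's $\ell$), observe that a copy of $P$ contained in a single super-tile forces the base tile into $\A'$, and control the remaining copies via the van Hove property for minimal super-tiles together with the sparseness of $\A'$-tiles from Lemma~\ref{lem-fre1}. The paper's two-parameter scheme $n=s+\ell$ plays the same role as your fixed-$k_0$-then-$n\to\infty$ limit, and its dichotomy $t_{i,s}\in\A'$ versus $t_{i,s}\notin\A'$ is just a reorganization of your (A)/(B) split (your case (A) and the $\A'$-part of (B) are merged in the paper's first sum).
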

\begin{proof}
Let $n=s+\ell$ and find $\T'\in \Xa$ such that $\om^n(\T') = \T$. Consider
$$C:= \om^s([B_1(\b0)]^{\T'})=\{t_{1,s}+\by_{1,s},\ldots,t_{k_s,s} + \by_{k_s,s}\},$$
where $t_{i,s}$ are prototiles (not necessarily distinct) and $\by_{i,s}\in \R^d$.
Then $F_n \seq \supp (\om^\ell(C))$ and we have
$$
N(F_n,P,\T) \le N(\om^{\ell}(C),P,\T) = N(\bigcup_{i=1}^{k_s} \om^\ell(t_{i,s} + \by_{i,s}), P, \T).
$$
Observe that if $P$ has nonempty intersection with $\om^\ell(t_{i,s}+\by_{i,s})$, then either $t_{i,s} \in \A'$ or $\supp(P)$ intersects the boundary
$\partial(\supp(\om^\ell(t_{i,s}+\by_{i,s})))$ for $t_{i,s} \not\in\A'$. Thus, in view of (\ref{eq-puga}),
\be \label{eq-nuga}
N(F_n,P,\T) \le \delta^{-1} \sum_{i\le k_s:\ t_{i,s} \in \A'} \vol(\supp (\om^\ell(t_{i,s}))) + \delta^{-1}
\sum_{i\le k_s:\ t_{i,s} \not\in \A'} \vol((\partial(\supp (\om^\ell(t_{i,s})))^{+\alpha}))
\ee
where $\alpha = \diam(P)$.
Fix $\ep>0$. By Lemma~\ref{lem-fre1}, there exists $s_0$ such that for $s\ge s_0$ we have
$$
\#\{i\le k_s:\ t_{i,s} \in \A'\} \le \ep |\det\varphi|^s.
$$
Denoting by $V_{\max}$ the maximal volume of a prototile we obtain
\begin{equation} \label{eq-new1}
\sum_{i:\ t_{i,s} \in \A'} \vol(\supp (\om^\ell(t_{i,s})))  \le  \ep|\det\varphi|^s \max_{i,s} \vol(\supp(\om^\ell(t_{i,s})))
\le \ep|\det\varphi|^{s+\ell} V_{\max}.
\end{equation}
On the other hand, $\{\supp(\om^\ell(t))\}_{\ell\ge 1}$ is a van Hove sequence for any prototile $t\not\in \A'$, hence there exists $\ell_0$ such that for any $t\not\in \A'$, for $\ell \ge \ell_0$,
$$
\vol((\partial(\supp (\om^\ell(t))))^{+\alpha}) \le \ep \,\vol(\supp (\om^\ell(t))).
$$
Combining this with (\ref{eq-nuga}), (\ref{eq-new1}), and using that
$$
\sum_{i=1}^{k_s} \vol(\om^{\ell}(t_{i,s})) = \vol(\om^{s+\ell}([B_1(\b0)]^{\T'}) \le |\det\varphi|^{s+\ell} KV_{\max}
$$
yields
$$
N(F_{s+\ell},P,\T) \le \ep \delta^{-1} |\det\varphi|^{s+\ell} (1+K) V_{\max}\ \ \ \mbox{for}\ s\ge s_0, \ell\ge \ell_0,
$$
and the claim follows.
\end{proof}

\medskip

Now Theorem~\ref{th-finite} is proved by the scheme given after its statement. We also obtain the following
\begin{theorem}
\label{pi1} There is an affine bijection between the set of finite
invariant measures of $X_{\A,\omega}$ and $core(M)$. The finite
ergodic measures of $X_{\A,\omega}$ are in one-to-one
correspondence with the distinguished eigenvectors of $M$.
\end{theorem}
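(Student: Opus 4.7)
\emph{Proof proposal.} The plan is to combine Theorem~\ref{th-finite} with the classical unique ergodicity of primitive tile substitutions and the description of $core(M)$ from Proposition~\ref{core}. By Theorem~\ref{th-finite}, every finite invariant measure $\mu$ of $(\Xa,\R^d)$ is supported on the disjoint union $\bigcup_{i=1}^m X_i$ of minimal components, so $\mu$ decomposes canonically as $\mu=\sum_{i=1}^m \mu|_{X_i}$, with each $\mu|_{X_i}$ a finite invariant measure of the primitive self-affine tiling system $(X_i,\R^d)$. By classical unique ergodicity in the primitive case (see \cite{So} and Theorem~\ref{th-min}(ii)), we have $\mu|_{X_i}=c_i\mu_i$ for a non-negative constant $c_i$, where $\mu_i$ is the unique invariant probability measure on $X_i$. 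Hence the assignment $\mu\mapsto(c_1,\dots,c_m)$ already gives an affine bijection between the cone of finite invariant measures and $\R_+^m$, and ergodicity is equivalent to exactly one coordinate being non-zero.

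Next I would identify each $\mu_i$ with a distinguished eigenvector of $M$ via its associated transverse measure $\mu_i^T$ (see Subsection~\ref{transversal} and the Appendix). For the primitive tile substitution $\om|_{\A_i}$ with expansion $\varphi$, I claim that the vector
\[
\bv_i:=\bigl(\mu_i^T(C_A)\bigr)_{A\in\A_i}
\]
is strictly positive and satisfies $M_i\bv_i=|\det\varphi|\,\bv_i$. This eigenvalue relation follows from the self-similarity of the substitution, which allows one to partition each transversal cylinder $C_A$ (up to translations in a set of zero transverse measure) into sub-cylinders indexed by the tiles of $\om(B)$ with $M_i(A,B)>0$, combined with the volume scaling $\lambda_0=|\det\varphi|$ coming from (\ref{def-sub}) and the $\R^d$-invariance of $\mu_i$. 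By primitivity of $M_i$, $\bv_i$ is a positive scalar multiple of the Perron--Frobenius eigenvector, and by Proposition~\ref{core}(iii) its extension by zero on $\A\setminus\A_i$ is exactly the distinguished eigenvector of $M$ associated with the maximal class $\A_i$, which I continue to denote $\bv_i$.

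Combining the two steps, the map
\[
T\!:\ \sum_{i=1}^m c_i\mu_i\ \longmapsto\ \sum_{i=1}^m c_i\bv_i
\]
is affine and injective from the cone of finite invariant measures, and by Proposition~\ref{core}(iv) its image equals $core(M)$. Restricting $T$ to extreme rays produces the one-to-one correspondence between finite ergodic measures and distinguished eigenvectors of $M$, since the extreme rays on both sides are those supported on a single $i$. The main technical point that needs care is the Perron--Frobenius identification of $\bv_i=(\mu_i^T(C_A))_{A\in\A_i}$: one must verify both the eigenvalue equation $M_i\bv_i=\lambda_0\bv_i$ from the self-similar structure of the cylinders in $\Gamma\cap X_i$, and consistency of normalizations across different minimal components, so that the resulting parametrization of $core(M)$ is canonical. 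Once this bookkeeping is in place, both assertions of the theorem follow.
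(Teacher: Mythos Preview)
Your proposal is correct and follows essentially the same approach as the paper: reduce to minimal components via Theorem~\ref{th-finite}, invoke unique ergodicity of primitive self-affine tiling systems, identify $(\mu_i^T(C_A))_{A\in\A}$ with the distinguished eigenvector attached to $\A_i$, and conclude via Proposition~\ref{core}. The paper's map is written directly as $\mu\mapsto(\mu^T(C_A))_{A\in\A}$ (which coincides with your $T$), and the Perron identification you flag as the delicate step is dispatched there by citation to \cite{So1} rather than the cylinder-partition sketch you give; note also that the correct reference for unique ergodicity is \cite{So1}, not \cite{So}.
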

\begin{proof}
Let $\mu$ be a finite ergodic measure. Theorem
\ref{th-finite} implies that $\mu$ is supported on a
minimal component $X_i$. Since $X_i$ is a minimal substitution
system, \cite[Theorem 3.1]{So1}), \cite[theorem 3.3]{So1},
\cite[Corollary 3.5]{So1} and \cite[8.2.11]{HJ} imply that $\mu$
is determined by the Perron eigenvalue of the matrix $M_i$, where
$M_i$ is the restriction of $M_{\omega}$ to the minimal component
$X_i$. Indeed,  $(\mu^T(C_A))_{A\in\A_i}$ is a Perron eigenvector
of $M_i$. Since $\mu^T(C_D)=0$ for every $D\in\A'$, Theorem
\ref{Perron-Frobenius}  implies that $(\mu^T(C_A))_{A\in\A}$ is a
distinguished eigenvector of $M$ associated to the class $\A_i$.
Since $core(M)$ is the cone generated by its distinguished
eigenvectors, we get that $(\mu^T(C_A))_{A\in \A}$ is in $core(M)$.
It is straightforward to show that the function $\mu\mapsto
(\mu^T(C_A))_{A\in\A}$, defined on the set of finite invariant
measures of $(X_{\A,\omega},\R^d)$ to $core(M)$,  is affine and
bijective.
\end{proof}

 \section{Recognizability} \label{recognizability}
A substitution $\om$ is {\em non-periodic} if the dynamical system
$(\Xa,\R^d)$ has no periodic points, that is, if $\Tk\in \Xa$ and
$\Tk+\bv=\Tk$ for $\bv\in \RR^d$, then $\bv=\b0$.
 In this section we show the following theorem:
 \begin{theorem}
 \label{th-recog}
 Let $\omega:\A\to \A^+$ be an admissible tiling substitution. The function $\omega: X_{\A,\omega}\to X_{\A,\omega}$ is one-to-one if and only if $\omega$ is non-periodic.
 \end{theorem}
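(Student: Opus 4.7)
The plan is to prove both directions of the equivalence. The forward direction ($\omega$ one-to-one implies $\omega$ non-periodic) is elementary, via a shrinking-periods argument. The converse (non-periodic implies one-to-one) is the main content and generalizes Solomyak's recognizability theorem \cite{So} from the primitive to the present non-primitive setting.

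For the forward direction, I would argue by contrapositive. Suppose $\T \in \Xa$ admits a period $\bv \neq \b0$. By Lemma~\ref{onto} there exists $\S \in \Xa$ with $\omega(\S) = \T$. From the identity $\omega(\S + \varphi^{-1}(\bv)) = \omega(\S) + \bv = \T = \omega(\S)$ together with the assumed injectivity, we obtain $\S + \varphi^{-1}(\bv) = \S$. Iterating this construction produces tilings $\S_n \in \Xa$ with $\S_n + \varphi^{-n}(\bv) = \S_n$. Since $\varphi$ is expansive, $\|\varphi^{-n}(\bv)\| \to 0$. On the other hand, the FPC property together with the standing assumption $\b0 \in \Int(\supp(A))$ for every prototile $A$ gives a uniform $\rho > 0$ such that no tiling in $\Xa$ can admit a period of norm strictly between $0$ and $\rho$: any translation of norm less than the uniform interior radius $\eta$ would force two distinct identically-typed tiles to share interior points. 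This contradicts $\|\varphi^{-n}(\bv)\| \to 0$.

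For the converse, again by contrapositive, assume $\omega(\T_1) = \omega(\T_2) = \S$ with $\T_1 \neq \T_2$, and aim to construct a periodic tiling in $\Xa$. After translating, the tiles of $\T_1$ and $\T_2$ at the origin may be assumed distinct. Following the Moss\'e--Solomyak scheme, FPC applied to the overlay gives only finitely many local pair-patches $(\T_1, \T_2)|_{B_R(\bx)}$ up to translation for each $R$, so by pigeonhole there is a sequence $\bx_n \to \infty$ along which these pair-patches stabilize, with $\bx_n$ always kept at a point of disagreement. Extracting a convergent subsequence of $(\T_1 - \bx_n, \T_2 - \bx_n)$ in the compact space $\Xa \times \Xa$ yields a limit pair $(\T_1', \T_2')$ with $\omega(\T_1') = \omega(\T_2')$ and a disagreement at the origin. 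A further diagonal extraction on the differences $\bx_n - \bx_m$ then produces a nonzero translation vector $\bv$ fixing the limit tilings, exhibiting a nontrivial period in $\Xa$.

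The main obstacle is ensuring, in the non-primitive setting, that the pair does not collapse onto the diagonal during this compactness extraction and that the extracted translation is genuinely nonzero. In the primitive case this follows automatically from repetitivity of $\Xa$; here, orbit-closure pairs may cross different components. The natural remedy is to project the pair-orbit closure to the first coordinate, invoke Lemma~\ref{lem-minim} to identify a minimal component $X_i$ containing the projection, and lift back to the pair setting via Lemma~\ref{onto}, while carefully tracking the location of disagreement throughout. The access relation $\succeq$ from Section~\ref{preliminaries} controls the behavior of the second coordinate under this extraction and, combined with expansiveness of $\varphi$ to prevent the period from collapsing to $\b0$, yields the required periodic tiling.
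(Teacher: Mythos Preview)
Your forward direction is correct and essentially identical to the paper's argument: from a periodic $\T$ one produces preimages $\T_i$ with $\omega^i(\T_i)=\omega^i(\T_i+\varphi^{-i}(\bv))=\T$, and for large $i$ the small nonzero vector $\varphi^{-i}(\bv)$ cannot be a period, so $\omega$ fails to be injective.

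The converse direction, however, has a genuine gap. Your scheme ``extract a limit pair $(\T_1',\T_2')$, then by diagonal extraction on $\bx_n-\bx_m$ obtain a nonzero period'' does not work as stated. The disagreement locus of $\T_1$ and $\T_2$ need not have any accumulation, so the differences $\bx_n-\bx_m$ are in general unbounded and no finite period can be extracted from them. Repetition of local pair-patches under FPC only tells you that translates of $(\T_1,\T_2)$ agree on large balls; it does not force any single tiling to coincide with a translate of itself. Your final paragraph, invoking Lemma~\ref{lem-minim} and the access relation to repair this in the non-primitive setting, is a plan rather than an argument, and it is not clear how projecting to a minimal component and lifting back would manufacture the missing period.

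The paper takes a completely different route, following Holton--Radin--Sadun \cite{HRS}. One introduces, for a patch $W$, the sets $\P_n(W)$ of minimal admissible patches $P$ with $\omega^n(W)\subseteq\omega^n(P)$, shows $\P_0(W)\subseteq\P_1(W)\subseteq\cdots$ with uniformly bounded supports (Lemma~\ref{lema1-p}), and proves that the chain stabilizes (Lemma~\ref{lem-finite-p}). Stabilization is forced by a rigidity statement (Proposition~\ref{prop-principal2}): if $\omega^n(Z)$ and a small translate $\omega^n(Z)+\bx$ both sit inside one tiling, then $\bx=\b0$. This rigidity is obtained by locating inside $\omega^n(Z)$ a large $X_i$-admissible subpatch via Lemma~\ref{lem-claim1} and then invoking the primitive case \cite{So} (Lemma~\ref{lema5}) on that subpatch. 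Once $\P_{n+1}(Z)=\P_n(Z)$, Lemma~\ref{lem-easy1} lets one pull $\omega^n(Z)$ back through $\omega$ uniquely, and exhausting $\R^d$ by such patches gives injectivity. Nowhere does one attempt to construct a period directly from a hypothetical failure of injectivity; the non-periodicity hypothesis enters only through the primitive rigidity lemma applied inside minimal components.
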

 It is straightforward to show that a periodic substitution is not one-to-one. Indeed, if $\T\in X_{\A,\omega}$ is such that $\T=\T+\bv$ for some $\bv\in \RR^d\setminus\{\b0\}$,
 then Proposition \ref{onto} implies that for every $i\geq 1$ there exists $\T_i\in X_{\A,\omega}$ such that $\omega^i(\T_i)=\omega^i(\T_i+\varphi^{-i}(\bv))=\T$. Observe that
 for $i\geq 1$ sufficiently large,  $\T_i\neq \T_i+\varphi^{-i}(\bv)$ since $\varphi^{-i}(\bv)$ is close to zero.
This proves that $\omega$ is not one-to-one.\\ \indent
Theorem \ref{th-recog} was already proved for primitive substitutions in \cite{So}, so here we focus on the non-primitive case.

 \medskip

We also obtain ``partial recognizability'' for a class of
substitutions with periodic points which we now define. We can
assume, without loss of generality, that (\ref{eq-irre}) holds,
and $X_1,\ldots, X_m$ are the minimal components of
$X_{\A,\omega}$.  We say that $X_j$ is {\em periodic} if there
exists $\Tk\in X_j$ and $\bv\ne \b0$ such that $\Tk+\bv =
\Tk$. Then $\bv$ is a translational period for all tilings in
$X_j$. Denote by $\Aper$ the set of prototiles which occur in
periodic minimal components and $\Anonp:= \A \setminus \Aper$. For
any legal patch $P$ let $P|_{\rm  nonp}$ be the subpatch of all
$\Anonp$ tiles in $P$.
\begin{definition} \label{def-border}
We say that a substitution $\om$ satisfies the {\em non-periodic border condition} if
\begin{equation} \label{eq-bord1}
\forall\,\A\in \Anonp,\ \partial (\supp(\om(A)))\seq \supp(\om(A)|_{\rm nonp}).
\end{equation}
\end{definition}
\begin{definition} \label{part-recog}
A tile substitution $\om$ is said to be {\em partially recognizable} if for every $\T\in \Xa$ which contains a tile of
$\Anonp$ type there is a unique tiling $\T'\in \Xa$ such that $\om(\Tk') = \Tk$.
\end{definition}
\begin{theorem} \label{th-recog2}
An admissible tile substitution $\om$ satisfying the non-periodic border condition is partially recognizable.
\end{theorem}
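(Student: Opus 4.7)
Suppose $\om(\T_1) = \om(\T_2) = \T$ where $\T$ contains a tile $A + \bx$ with $A \in \Anonp$; the goal is to show $\T_1 = \T_2$. For $j = 1, 2$, let $S_j = \om(B_j + \by_j)$ be the super-tile in the decomposition $\{\om(C+\bz) : C+\bz \in \T_j\}$ containing $A + \bx$. Because $\Aper$ is the union of the periodic minimal components and those are $\om$-invariant, we have $\om(\Aper) \seq \Aper^+$, and so $B_j \in \Anonp$ (otherwise $A$ would be a periodic tile). The proof proceeds in three stages: first match $S_1$ with $S_2$, then propagate to all non-periodic super-tiles, and finally match the periodic parts using the boundary rigidity imposed by the now-fixed non-periodic tiles.

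The key stage is showing $S_1 = S_2$. Suppose for contradiction that $\supp(S_1) \ne \supp(S_2)$; since both supports are compact and contain $\supp(A+\bx)$, there is a point $q \in \partial(\supp(S_1)) \cap \Int(\supp(S_2))$ (after possibly exchanging indices). The non-periodic border condition applied to $B_1$ gives a non-periodic tile $t \in S_1$ with $q \in \supp(t)$; since $\supp(t)$ meets $\Int(\supp(S_2))$ and the tiles of $S_2 \seq \T$ partition $\Int(\supp(S_2))$, we also have $t \in S_2$. Thus every point of disagreement is witnessed by a non-periodic tile lying in both super-tiles. By FPC, only finitely many displacement vectors $\bw = \by_1 - \by_2$ can arise from a configuration in which a shared non-periodic tile sits in two distinct super-tiles. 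Iterating this boundary observation along the shared non-periodic tiles, one builds arbitrarily large patches of $\T$ around $A + \bx$ admitting two super-tile placements differing by $\varphi(\bw)$, with $\bw \ne \b0$. Passing to a compactness limit in $\Xa$, together with admissibility, this produces a tiling in $\Xa$ having a non-trivial translational period at a non-periodic tile --- which contradicts Solomyak's recognizability \cite{So} applied to the primitive non-periodic irreducible sub-component of $\A$ to which $B_1$ belongs. Hence $S_1 = S_2$, forcing $B_1 + \by_1 = B_2 + \by_2 \in \T_1 \cap \T_2$.

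Applying this argument at every non-periodic tile of $\T$ shows that $\T_1$ and $\T_2$ share all their non-periodic tiles. Let $R$ be the complement in $\R^d$ of the union of supports of these non-periodic tiles (the same set for $j = 1, 2$). Each connected component of $R$ is tiled by $\T_j$ using prototiles from a single periodic minimal component $\A_i$, and both $\T_j|_R$ substitute to the same periodic portion of $\T$ (namely the periodic tiles in $\varphi(R)$). On each such component the tiling is periodic with some lattice $\Lambda_i$ of translational periods; the boundary $\partial R$ is adjacent to the now-fixed non-periodic tiles of $\T_j$, which rigidly determine the shape, type, and position of the periodic tile immediately adjacent. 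Since a periodic tiling is pinned down by any one of its tiles modulo $\Lambda_i$, this yields $\T_1|_R = \T_2|_R$ and hence $\T_1 = \T_2$. The main obstacle is the middle stage --- rigorously converting the iterated boundary tracking into a compactness-produced translational period --- which is precisely where the non-periodic border condition does its essential work, by guaranteeing that every discrepancy supplies enough non-periodic data to feed into primitive recognizability.
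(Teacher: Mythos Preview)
Your outline has the right instinct---use the non-periodic border to
propagate information and then invoke primitive recognizability
somewhere---but two of the three stages contain genuine gaps.

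\textbf{Stage 2 (matching $S_1$ with $S_2$).}  The sentence
``contradicts Solomyak's recognizability applied to the primitive
non-periodic irreducible sub-component of $\A$ to which $B_1$ belongs''
does not type-check.  The tile $B_1$ lies in $\Anonp$, but $\Anonp$
contains all of $\A' = \A\setminus\bigcup_{i\le m}\A_i$, not just the
non-periodic minimal components.  If $B_1\in\A_j\subseteq\A'$ then
$\om|_{\A_j}$ is \emph{not} a tile substitution (the patch $\om(B_1)$
need not lie in $\A_j^+$), so there is no primitive sub-substitution to
which \cite{So} applies.  The paper handles this by a different
mechanism: inside a high-order supertile $\om^n(Z)$ with $Z\in\Fnonp$
one first locates, via a compactness lemma (Lemma~\ref{lem-claim1}), a
tile from some minimal component $X_i$, and only then invokes primitive
recognizability on $X_i$ (or, if $X_i$ is periodic, runs a separate
``tube'' argument that actually uses the non-periodic border).
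Moreover, your ``iterating the boundary observation \ldots\ passing to a
compactness limit'' paragraph is not a proof: you have not specified
what is iterated, what sequence converges, or why the limit tiling both
contains a tile of type $\Anonp$ and has a nonzero period.

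\textbf{Stage 3 (matching the periodic regions).}  The claim that the
fixed non-periodic tiles along $\partial R$ ``rigidly determine the
shape, type, and position of the periodic tile immediately adjacent'' is
unsupported.  Legality in an FPC tiling space does not force a unique
neighbour type; several periodic prototiles may legally abut the same
non-periodic tile, and periodic substitutions are precisely the ones
that fail injectivity.  The paper sidesteps this entirely: it never
separates the periodic and non-periodic regions of $\T_1$.  Instead it
shows, via the stabilisation $\P_{n+1}(Z)=\P_n(Z)$ for collared tiles
$Z\in\Fnonp$ (Lemma~\ref{lem-finite-p}) and the pull-back
Lemma~\ref{lem-easy1}, that any preimage $\T'$ of $\T$ must contain the
full patches $\om^n(Z_{n+1})\subseteq\T_1$, periodic tiles included, and
these patches exhaust $\R^d$.

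In short, your scheme tries to work at the level of first-order
supertiles and then patch up the periodic holes; the paper works with
supertiles $\om^n(Z)$ of all orders simultaneously, proves a uniform
``no small return'' statement (Proposition~\ref{prop-principal2}), and
derives recognizability from the resulting finiteness of $\P(Z)$.  The
non-periodic border condition enters only in the periodic-minimal case
of that proposition, not as a local boundary-matching device.
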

\begin{corollary} \label{cor-period}
If $\om$ has non-periodic border and $\T\in \Xa$ contains a tile in $\Anonp$, then $\T$ is non-periodic.
\end{corollary}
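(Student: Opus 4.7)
The plan is to argue by contradiction: suppose $\T\in\Xa$ contains a tile of type $A\in\Anonp$ and that $\T+\bv=\T$ for some $\bv\ne\b0$. I will apply partial recognizability (Theorem~\ref{th-recog2}) iteratively to produce preimages $\T_n$ of $\T$ under $\om^n$, transfer the period $\bv$ back through the substitution to obtain a period $\varphi^{-n}(\bv)$ of $\T_n$, and then use expansiveness of $\varphi$ to drive these periods below the minimum size forced by the geometry of the tiles.

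The first ingredient I need is the following permanence statement: if $\om(B)$ contains a tile of type $A\in\Anonp$, then $B\in\Anonp$. This is where the non-periodic border condition plays no role directly; instead I use that $\Aper$ is, by definition, the union of prototile sets $\A_i$ of periodic minimal components, and that such components correspond to \emph{maximal} irreducible components of $G(M)$ (so $\om(\A_i)\seq\A_i^+$). Consequently, if $B\in\Aper$ then every tile in $\om(B)$ also lies in $\Aper$; contrapositively, preimages of $\Anonp$-tiles under $\om$ come from $\Anonp$-tiles. Using Lemma~\ref{onto} to pick $\T_1\in\Xa$ with $\om(\T_1)=\T$ and then applying this observation, I find that $\T_1$ also contains an $\Anonp$-tile, so Theorem~\ref{th-recog2} applies to $\T_1$ as well. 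Iterating this, I obtain a sequence $\T_n\in\Xa$ with $\om^n(\T_n)=\T$ and each $\T_n$ containing an $\Anonp$-tile.

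Next I transport the period. The tile substitution is $\varphi$-equivariant, i.e.\ $\om(\S+\bu)=\om(\S)+\varphi(\bu)$ for every $\S\in\Xa$ and $\bu\in\R^d$, directly from $\om(A+\bx)=\varphi(\bx)+\om(A)$. Hence $\om(\T_1+\varphi^{-1}(\bv))=\T+\bv=\T$, and since $\T_1+\varphi^{-1}(\bv)$ still contains an $\Anonp$-tile, the uniqueness clause in Theorem~\ref{th-recog2} forces $\T_1+\varphi^{-1}(\bv)=\T_1$. Induction on $n$ gives $\T_n+\varphi^{-n}(\bv)=\T_n$ for every $n\ge 1$.

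Finally, I contradict this. Since $\varphi$ is expansive, $\varphi^{-n}(\bv)\to\b0$. On the other hand, by Definition~\ref{def-subst} every prototile contains a ball $B_\eta(\b0)$ in its interior for some fixed $\eta>0$, so the interior of every tile $t\in\T_n$ contains a translate of $B_\eta(\b0)$; therefore any nonzero period $\bu$ of $\T_n$ must satisfy $\|\bu\|\ge 2\eta$, since otherwise the interior of any tile $t\in\T_n$ would meet the interior of the distinct tile $t+\bu\in\T_n$. Taking $n$ large enough that $\|\varphi^{-n}(\bv)\|<2\eta$ yields the contradiction and completes the proof. The main subtlety in this argument is the permanence claim for $\Anonp$ under preimages by $\om$, which is needed to keep partial recognizability applicable at every step of the iteration; everything else is a direct combination of equivariance, uniqueness, and expansiveness.
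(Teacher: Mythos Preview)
Your argument is correct and follows the route the paper intends: the paper does not spell out a separate proof of this corollary, but the argument is the obvious adaptation of the paragraph following Theorem~\ref{th-recog} (where periodicity is shown to contradict injectivity of $\om$), replacing global injectivity by the partial recognizability supplied by Theorem~\ref{th-recog2}.

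One small point of phrasing: in your transport step, the hypothesis of partial recognizability (Definition~\ref{part-recog}) is on the \emph{image} tiling, not on the preimage. So at the first step the fact that $\T$ contains an $\Anonp$-tile already forces uniqueness of its preimage, hence $\T_1+\varphi^{-1}(\bv)=\T_1$; you do not need to invoke that $\T_1+\varphi^{-1}(\bv)$ contains an $\Anonp$-tile for \emph{that} conclusion. Your permanence claim (that $\om(\Aper)\subseteq\Aper^+$, so preimages under $\om$ of tilings containing $\Anonp$-tiles again contain $\Anonp$-tiles) is still needed, exactly where you place it: to guarantee that $\T_1,\T_2,\ldots$ each contain an $\Anonp$-tile, so that partial recognizability continues to apply at every stage of the iteration.
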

The non-periodic border condition is not necessary for the claim of Theorem~\ref{th-recog2} to hold (see Example~\ref{ex-saf} below), but
the following example shows that some assumption on the substitution $\om$ is needed.
It is plausible that, without any additional assumptions, a {\em non-periodic} tiling $\Tk$ has a unique preimage under $\om$; however, this remains an open question.
\begin{example} {\em
Let $\Ak = \left\{\begin{tabular}{|c|} \hline 0 \\ \hline \end{tabular}\,,
\begin{tabular}{|c|} \hline 1 \\ \hline \end{tabular}\right\}$;
$$
\begin{tabular}{|c|} \hline 0 \\ \hline \end{tabular}\ \to\  \begin{tabular}{|c|c|c|} \hline 0 & 0 & 0 \\ \hline
                                      0 & 0 & 0 \\ \hline
                                      0 & 0 & 0 \\ \hline
\end{tabular}\,
,\ \ \ \ \ \
\begin{tabular}{|c|} \hline 1 \\ \hline \end{tabular}\  \to\  \begin{tabular}{|c|c|c|} \hline 0 & 1 & 0 \\ \hline
                                                                                0 & 1 & 0 \\ \hline
                                                                0 & 1 & 0 \\ \hline
                                        \end{tabular}
$$
Note that all tilings in $\Xa$ are periodic under the vertical shift, but according to our definition, only the prototile labelled 0 (which is in the minimal component) is periodic.
Thus, the conclusion of Corollary~\ref{cor-period} is violated (of course, the non-periodic border condition does not hold).
}
\end{example}

\medskip

Now we start preparation for the proofs.
Clearly, a non-periodic substitution has non-periodic border, so Theorem~\ref{th-recog2} implies Theorem~\ref{th-recog}.
Our argument is based on the method of \cite{HRS} where a new proof of recognizability for primitive tile substitutions was given (it applied to
a more general class of tilings, not just translationally FPC).
We should note that a direct proof of Theorem~\ref{th-recog} is simpler, and we will indicate which parts can
be skipped if one is only interested in non-periodic substitutions.

\medskip

Recall that $\eta>0$ is such that the support of
every prototile in $\A$ contains the closed ball $\ov{B}_\eta(\b0)$ in its interior. Let
$\gamma=\max_{A\in \A}\{ \diam(A) \}$, and $1<\lambda_1\leq
\lambda_2<\infty$ are positive numbers such that
\begin{equation} \label{eq-norm}
\lambda_1\|\bx\|\leq \|\varphi(\bx)\|\leq
\lambda_2\|\bx\|, \mbox{ for every } \bx\in \RR^d.
\end{equation}
Since $\varphi$ is expansive, we can find a norm in $\RR^d$ with
this property, and balls in this section will always be considered with respect to this norm. Then for every $n\geq 1$ and $\by\in
\RR^d$,
\begin{equation} \label{eq-expand}
B_{\lambda_1^nr}(\varphi^n(\by))\subseteq
\varphi^n(B_r(\by))\subseteq
B_{\lambda_2^nr}(\varphi^n(\by))
\end{equation}
The next definition was introduced in \cite{HRS}.

\medskip

Let $W$ be an $X_{\A,\omega}$-admissible patch. For every $n\geq
0$, let $\P_n(W)$ be the set of
 $X_{\A,\omega}$-admissible patches $P$ satisfying:
  \begin{enumerate}
   \item
    $\omega^n(W)\subseteq \omega^n(P)$;
     \item
      $\omega^n(W)$ is not included in $\omega^n(P')$, for any proper subpatch $P'$ of $P$.
       \end{enumerate}
Actually, \cite{HRS} used only $\P_n(t)$ for a single tile $t$; this would be sufficient if we were to restrict ourselves to non-periodic tilings.
\begin{lemma}
 \label{lema1-p}
  Let $W$ be an $X_{\A,\omega}$-admissible patch.
   \begin{enumerate}
    \item[{\bf (i)}]
     Let $n\geq 0$. For every $P\in \P_n(W)$, we have
      $
       \supp(P)\subseteq B_{2\gam}(\supp(W)).
        $
     \item[{\bf (ii)}]
      $\{W\}=\P_0(W)\subseteq \P_1(W)\subseteq \P_2(W)\subseteq\cdots$.
       \end{enumerate}
        \end{lemma}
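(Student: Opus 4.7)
The plan is to derive both parts directly from the defining properties of $\P_n(W)$, exploiting the fact that $\omega^n$ applied to a patch produces tiles with pairwise disjoint interiors inherited from the input; this lets us trace each tile of $\omega^n(W) \subseteq \omega^n(P)$ back to a unique tile of $P$.

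For (i), I will show that every $t \in P$ must satisfy $\supp(t) \cap \supp(W) \ne \emptyset$, from which the statement follows since $\diam(\supp(t)) \le \gam$. Fix $t \in P$ and set $P' = P \setminus \{t\}$; minimality clause (2) yields a tile $s \in \omega^n(W)$ that is not among the tiles of $\omega^n(P')$. By clause (1), $s$ lies in $\omega^n(P) = \omega^n(P') \cup \omega^n(t)$, and since the tiles of $\omega^n(P)$ have pairwise disjoint interiors, $s$ is in fact a tile of $\omega^n(t)$. Therefore $\supp(s) \subseteq \varphi^n(\supp(t)) \cap \varphi^n(\supp(W))$, so applying $\varphi^{-n}$ produces a set of nonempty interior inside $\supp(t) \cap \supp(W)$. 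Any $y \in \supp(t)$ then satisfies $\dist(y,\supp(W)) \le \gam$, hence $\supp(t) \subseteq B_{2\gam}(\supp(W))$, and (i) follows by taking the union over $t \in P$.

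For (ii), I first observe that $\P_0(W) = \{W\}$: since $\omega^0$ is the identity, clause (1) reads $W \subseteq P$ and clause (2) forbids any tile of $P$ lying outside $W$, so $P = W$. To prove the inclusion $\P_n(W) \subseteq \P_{n+1}(W)$, fix $P \in \P_n(W)$. Applying $\omega$ to both sides of $\omega^n(W) \subseteq \omega^n(P)$ immediately yields clause (1) at level $n+1$. For clause (2), let $P' \subsetneq P$ be a proper subpatch; by hypothesis there exists a tile $s \in \omega^n(W)$ that is not a tile of $\omega^n(P')$, and by the same disjointness argument as in (i), $s$ is a tile of $\omega^n(t)$ for some $t \in P \setminus P'$. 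Then every tile of $\omega(s) \subseteq \omega^{n+1}(W)$ sits inside $\omega^{n+1}(t) \subseteq \omega^{n+1}(P \setminus P')$, whose tiles have interiors disjoint from those of $\omega^{n+1}(P')$. Hence $\omega^{n+1}(W) \not\subseteq \omega^{n+1}(P')$, completing the verification.

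The argument is essentially a direct unpacking of the definitions; the only subtlety I expect is being careful that containment between patches refers to containment of \emph{tiles} (not merely of supports), so that the disjoint-interior property of $\omega^n(P)$ can be used to localize each tile of $\omega^n(W)$ inside a single $\omega^n(t)$. I do not anticipate any serious obstacle.
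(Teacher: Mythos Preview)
Your proof is correct and follows essentially the same approach as the paper. For (i), both you and the paper show that every tile of $P$ must meet $\supp(W)$; the paper does this in one stroke by letting $P'$ be the subpatch of tiles meeting $\supp(W)$ and invoking minimality to get $P'=P$, while you remove tiles one at a time, but the underlying idea is identical. For (ii), the paper argues directly via supports (from $\omega^{n+1}(W)\subseteq\omega^{n+1}(P')$ it deduces $\supp(\omega^n(W))\subseteq\supp(\omega^n(P'))$ and hence $\omega^n(W)\subseteq\omega^n(P')$), whereas you prove the contrapositive by tracking a specific tile $s\in\omega^n(W)\setminus\omega^n(P')$ through one more iteration of $\omega$; these are equivalent elementary arguments.
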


 \begin{proof}
  (i) Let $n\geq 0$ and  $P\in \P_n(W)$. Since $\omega^n(W)\subseteq \omega^n(P)$, we have that $\supp(W) \subseteq \supp(P)$. Let $P'$ be the set of
all tiles in $P$ whose supports intersect $\supp(W)$. Then $P'\subseteq P$,
  and we have $\om^n(W) \subseteq \om^n(P') \subseteq \om^n(P)$. By
  the definition of $\P_n(W)$ we must have $P'=P$, and the desired
  property follows, since $\gam$ is the maximal diameter of a
  $X_{\A,\om}$-tile.

(ii) It is clear that $\{W\}=\P_0(W)$.
 Let $n\geq 0$ and $P\in \P_n(W)$.  The definition of the set $\P_n(W)$ implies that $\omega^{n+1}(W)\subseteq \omega^{n+1}(P)$.
 If $P'$ is a subpatch of $P$ for which $\omega^{n+1}(W)\subseteq
 \omega^{n+1}(P')$, then the support of $\omega^n(W)$ is included
 in the support of $\omega^n(P')$. This implies that
 $\omega^n(W)\subseteq \omega^n(P')$, and from definition of
 $\P_n(W)$, we get $P'=P$. This shows that $\P_n(W)\subseteq
 \P_{n+1}(W)$.
  \end{proof}

 Let $\P(W)=\bigcup_{n\geq 0}\P_n(W)$. The FPC assumption and part (i) of  Lemma \ref{lema1-p} imply that $\P(W)$ is finite up to translation.
 In the non-periodic case one can show that $\P(t)$ is finite, for any tile $t$. In the non-periodic border case, this is no longer true, and we
 have to work with ``first coronas'' or ``collared tiles'' containing at least one non-periodic tile.
More precisely, consider all legal patches of the form $[\supp(t)]^\T,\ t\in \T$,
for some $\T\in X_{\A,\om}$;
there are finitely many of them, up to translation. We choose a
representative for each of the translation-equivalent classes, and
denote their collection by $\F$. Denote by $\Fnonp$ the set of
patches in $\F$ which contain a tile of type $\Anonp$.
Now we can state the key proposition needed in the proof of Theorem~\ref{th-recog2}.

\begin{proposition}
\label{prop-principal2}   There exists $M\in \NN$ such that for
any $\T\in X_{\A,\omega}$, $n\geq 0$ and $\bx,\by\in
\RR^d$, if $P=\om^n(Z)$, with $Z - \by \in \Fnonp$, such that
     $$
     P\subseteq \T, \,\,\,\, P+\bx \subseteq \T,
     $$
     then
     $$
     \bx\in  \varphi^{n-M}(B_\eta(\b0))\ \mbox{implies}\  \bx=\b0.
     $$
\end{proposition}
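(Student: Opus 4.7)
I would argue by contradiction, adapting the $\P_n(W)$-machinery of \cite{HRS} to our non-periodic-border setting. Assume no such $M$ exists: then for each $m\ge 1$ there are data $(\T_m,n_m,Z_m,\by_m,\bx_m)$ with $\bx_m\ne\b0$, $\bx_m\in\varphi^{n_m-m}(B_\eta(\b0))$, and $\om^{n_m}(Z_m),\om^{n_m}(Z_m)+\bx_m\subseteq\T_m$. Finiteness of $\Fnonp$ lets us pass to a subsequence with $Z_m-\by_m=W$ fixed in $\Fnonp$; translating $\T_m$, take $\by_m=\b0$. Set $\bx_m':=\varphi^{-n_m}(\bx_m)\in\varphi^{-m}(B_\eta(\b0))$, so $\bx_m'\to\b0$ while $\bx_m'\ne\b0$.

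Using Lemma~\ref{onto}, choose preimages $\S_m\in\Xa$ with $\om^{n_m}(\S_m)=\T_m$. Let $U_m\subseteq\S_m$ be the set of tiles of $\S_m$ whose $\om^{n_m}$-image contains a tile of $\om^{n_m}(W)$; this is the minimal subpatch of $\S_m$ with $\om^{n_m}(U_m)\supseteq\om^{n_m}(W)$, hence $U_m\in\P_{n_m}(W)$. Analogously define $U_m''\subseteq\S_m$ for $\om^{n_m}(W)+\bx_m$, and set $V_m:=U_m''-\bx_m'\in\P_{n_m}(W)$. By Lemma~\ref{lema1-p}(i), every element of $\P(W):=\bigcup_n\P_n(W)$ has support in the fixed bounded set $B_{2\gamma}(\supp(W))$, so FPC implies $\P(W)$ is finite up to translation. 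Passing to a further subsequence, assume $U_m$ and $V_m$ are translates of fixed patches $U^{(0)},V^{(0)}\in\P(W)$ by bounded translations; another subsequence yields convergences $U_m\to U$, $V_m\to V$, and $\S_m\to\S^*$ in $\Xa$. From $U_m\subseteq\S_m\to\S^*$ we obtain $U\subseteq\S^*$; from $V_m+\bx_m'=U_m''\subseteq\S_m$ and $\bx_m'\to\b0$ we obtain $V\subseteq\S^*$.

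The heart of the argument, and the main obstacle, is to deduce $U=V=W$. Because $W\in\Fnonp$ contains a tile $t_0$ of $\Anonp$ type, iterating~(\ref{eq-bord1}) gives that every tile of $\om^{n_m}(t_0)$ adjacent to $\partial\varphi^{n_m}(\supp(t_0))$ is non-periodic. These non-periodic ``skeleton'' tiles of $\om^{n_m}(t_0)\subseteq\om^{n_m}(W)$ are tiles of $\T_m$, hence also tiles of $\om^{n_m}(u_0)$ for the (unique) ancestor $u_0\in U_m$ of $t_0$. A careful analysis, exploiting the rigid placement of $\Anonp$-tiles near the boundary of a super-tile together with the disjointness of supports of $\{\varphi^{n_m}(\supp(u))\}_{u\in U_m}$, forces $u_0=t_0$ as tiles. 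Applying the same reasoning to every tile of $W$ (using adjacency in $W$ to pin down neighboring ancestors step by step) yields $U_m=W$ and likewise $V_m=W$. Then $W\subseteq\S_m$ and $W+\bx_m'\subseteq\S_m$: the tiles $t_0$ and $t_0+\bx_m'$ of $\S_m$ have overlapping interiors once $\|\bx_m'\|<\eta$ (since $B_\eta(\b0)$ sits inside the interior of $\supp(t_0)$), contradicting the disjoint-interior axiom of $\S_m$ unless $\bx_m'=\b0$. This contradicts $\bx_m\ne\b0$ and proves the proposition. The delicate part is the propagation from a single non-periodic tile $t_0$ to all of $W$, which may contain periodic tiles and which must nonetheless be recovered rigidly from its substituted image; this is exactly what the non-periodic border condition is designed to enable.
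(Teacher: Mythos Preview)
Your approach differs substantially from the paper's and has a genuine gap at precisely the step you flag as ``delicate.'' You claim that the non-periodic boundary tiles of $\om^{n_m}(t_0)$ force the ancestor $u_0\in U_m$ to coincide with $t_0$, and then propagate to recover all of $W$ inside $\S_m$. But there is no a priori reason $t_0$ should be a tile of $\S_m$ at all: $\S_m$ is merely \emph{some} $\om^{n_m}$-preimage of $\T_m$, and the supertile decomposition of $\T_m$ it induces need not align with the one coming from a tiling containing $W$. The non-periodic border tiles of $\om^{n_m}(t_0)$ mark $\partial\varphi^{n_m}(\supp(t_0))$ only if you already know where that boundary sits; other non-periodic tiles of $\T_m$ may lie nearby, and nothing you have said distinguishes them. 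In effect, establishing $U_m=W$ (exactly, not up to translation) is the content of Lemma~\ref{lem-finite-p}, which in the paper is proved \emph{after} and \emph{using} Proposition~\ref{prop-principal2}. Your argument thus risks circularity, and the hand-waved ``careful analysis'' is carrying the entire weight of the proposition.

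The paper's proof avoids pulling back altogether and works directly in $\T$. Inside $\om^n(t)$, for the non-periodic tile $t\in Z$, it locates a large patch $P'$ lying in some minimal component $X_i$ (Lemma~\ref{lem-claim1}). If $X_i$ is non-periodic, the known rigidity for primitive non-periodic substitutions (Lemma~\ref{lema5}, quoted from \cite{So}) yields $\bx=\b0$ via Lemma~\ref{lem-period}. If $X_i$ is periodic, the paper gives a direct ``tube'' argument: since $P,P+\bx\subseteq\T$, a connected open piece $V\subseteq\supp(P')$ can be translated by multiples of $\bx$, remaining inside $\supp(\om^n(t))$ up to some step $k$; then $(V+k\bx)\cup(V+(k+1)\bx)$ is connected and crosses $\partial\supp(\om^n(t))$, so it meets an $\Anonp$-tile by the border condition~(\ref{eq-bord1}), yet every tile it meets is a translate of a tile in $[V]^\T\subseteq P'$, hence of $\Aper$ type---a contradiction. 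No preimage tilings, no compactness, and no $\P_n$-analysis are needed at this stage.
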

The proof will be based on several lemmas.

\begin{lemma}
 \label{lem-claim1}
  There exists $R_0>0$ such that for every $\T\in X_{\A,\omega}$ and $\bx\in \RR^d$, the ball $B_{R_0}(\bx)$ contains an $X_i$-admissible $\T$-patch,
  for some  $1\leq i\leq m$.
   \end{lemma}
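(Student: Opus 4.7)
The plan is to combine compactness of $\Xa$ with a key reduction: it suffices to show that every tiling $\T\in\Xa$ contains at least one tile of type in $\bigcup_{i=1}^m \A_i$. Indeed, any single tile of type $A\in\A_i$ is an $X_i$-admissible patch, since $\om|_{\A_i}$ is primitive and admissible, so every $A\in\A_i$ lies in some tiling of $X_i$. Therefore a uniform bound $R_0$ on the distance from any point to the nearest such tile yields the conclusion. Step one is to prove the existence-of-a-tile claim; step two is to upgrade it to a uniform radius via a compactness/contradiction argument.

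For step one, I would invoke Lemma~\ref{onto} together with the following graph-theoretic fact: in the finite DAG of strongly connected components of $G(M)$, every vertex is reachable by a directed path starting at some maximal component, because going backwards along edges from any vertex in a finite DAG must terminate at a source. Translating via the convention that an edge $v\to w$ in $G(M)$ corresponds to $\om(A_w)$ containing a tile of type $A_v$, this says: for every prototile $A\in\A$ there exists an integer $k_A$ such that $\om^{k_A}(A)$ contains a tile of type in some $\A_i$, $i\le m$. Setting $K:=\max_{A\in\A}k_A$ and using Lemma~\ref{onto} to obtain $\T_K\in\Xa$ with $\om^K(\T_K)=\T$, I pick any tile $t\in\T_K$ and observe that the patch $\om^K(t)\subseteq\T$ contains an $\A_i$-tile, so $\T$ itself does.

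For step two, I argue by contradiction. If no $R_0$ works, then for every $k\in\NN$ there exist $\T_k\in\Xa$ and $\bx_k\in\RR^d$ such that $B_k(\bx_k)$ contains no $X_i$-admissible $\T_k$-patch for any $i\le m$. Passing to a subsequence, by compactness of $\Xa$ assume the translates $\T_k-\bx_k$ converge to some $\T^{\ast}\in\Xa$. For any tile $t\in\T^{\ast}$ with $\supp(t)\subseteq B_R(\b0)$, convergence in the tiling metric supplies translations $\bg_k\to\b0$ with $t+\bg_k\in\T_k-\bx_k$; for all sufficiently large $k$ the support of $t+\bg_k$ lies inside $B_{R+1}(\b0)\subseteq B_k(\b0)$, so by assumption the type of $t$ cannot lie in $\bigcup_i\A_i$. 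Since every tile of $\T^{\ast}$ has bounded support, this produces a tiling in $\Xa$ containing no tile of type in $\bigcup_i\A_i$, contradicting step one.

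The main obstacle I expect is not conceptual but careful bookkeeping: keeping straight the two opposing conventions between edge direction in $G(M)$ and direction of the substitution (one reads $\om(A_w)\supseteq A_v$ off the edge $v\to w$), and handling the small translation slack built into the tiling metric when passing tiles from $\T^{\ast}$ back to the approximants $\T_k-\bx_k$. Both are routine once the conventions are set.
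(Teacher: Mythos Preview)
Your proof is correct and follows essentially the same compactness/contradiction scheme as the paper: assume no uniform $R_0$ exists, translate the bad balls to the origin, pass to a convergent subsequence, and obtain a tiling with no patch from any minimal component. The only difference is how that last contradiction is reached: the paper simply observes that the orbit closure of any tiling must contain some minimal component $X_i$ (a one-line appeal to general topological dynamics, cf.\ Lemma~\ref{lem-minim}), whereas you give a more explicit combinatorial argument via Lemma~\ref{onto} and the graph $G(M)$ to show directly that every $\T\in\Xa$ contains a tile of type in some $\A_i$. Your route is slightly longer but more self-contained; the paper's is shorter but leans on the orbit-closure fact. Either way the argument is the same in structure.
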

   \begin{proof}
   Suppose  that  for every $R>0$ there exist $\T_R\in X_{\A,\omega}$
   and $\bx_R\in \RR^d$, such that the ball $B_{R}(\bx_R)$ does not
   contain patches belonging to any minimal component.  Compactness
   of $X_{\A,\omega}$ implies there exists a sequence $R_n
   \uparrow\infty$ such that $(\T_{R_n}-\bx_{R_n})_{n\geq 0}$ converges to
   some tiling $\T\in X_{\A,\omega}$. It follows that $\T$ does not
   contain patches from any minimal component, which is not possible
   because $\clos\{\T-\bg:\,\bg\in \R^d\}$ must contain at least one minimal component
   of $X_{\A,\omega}$.
\end{proof}
The strategy of the proof of Proposition~\ref{prop-principal2} is
to find a large sub-patch of $\om^n(t)\seq \om^n(Z)$, with
$t-\bx_t \in \Fnonp$, which belongs to a minimal component
$X_i$. This component may be non-periodic or periodic. The former
case is treated with the following two lemmas. The first one is a
special case of \cite[Lemma 3.2]{So}.
  \begin{lemma}\label{lema5}{\em (\cite[Lemma 3.2]{So})}
       Let $\omega:\A\to \A^+$ be a primitive non-periodic substitution, and let $\eta>0$ be such that the support of every prototile in $\A$ contains the ball $B_\eta(\b0)$.  Then there exists $N\in \NN$ such that, for any $\T\in X_{\A,\om}$, $l>0$, and $\bx,\by\in \RR^d$, if
            $$
             P\subseteq \T, \,\,\,\, P+\bx \in \T,   \,\,\,\, \varphi^l(B_\eta(\b0))+\by\subseteq  \supp(P),
              $$
                   then
                    $$
                     \bx\in  \varphi^{l-N}(B_\eta(\b0)) \mbox{ implies }   \bx=\b0.
                          $$
                       \end{lemma}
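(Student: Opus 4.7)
I would prove this Mossé-type quantitative rigidity statement by contradiction, using compactness of $X_{\A,\om}$ together with non-periodicity. Suppose no such $N$ exists. Then for every $k \ge 1$ one can produce $\T_k \in X_{\A,\om}$, $l_k \ge 1$, a patch $P_k$, and vectors $\bx_k \ne \b0$, $\by_k \in \R^d$ such that $P_k \cup (P_k + \bx_k) \subseteq \T_k$, $\varphi^{l_k}(B_\eta(\b0)) + \by_k \subseteq \supp(P_k)$, and $\bx_k \in \varphi^{l_k - k}(B_\eta(\b0))$. Translating by $-\by_k$, both $\S_k := \T_k - \by_k$ and $\S_k - \bx_k$ contain the patch $P_k - \by_k$, so they agree on $\varphi^{l_k}(B_\eta(\b0)) \supseteq B_{\lambda_1^{l_k}\eta}(\b0)$.

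The elementary tool throughout is \emph{tile separation}: since $B_\eta(\b0) \subseteq \Int \supp(A)$ for every prototile $A \in \A$, any two distinct tiles in a tiling have centers at distance at least $2\eta$. Consequently, whenever $P \subseteq \T$ and $P + \bv \subseteq \T$ with $\|\bv\| < 2\eta$, each tile $t \in P$ yields two tiles $t$ and $t+\bv$ of $\T$ within $2\eta$, forcing $t+\bv=t$, i.e., $\bv = \b0$. If $l_k$ stays bounded along a subsequence, then $\varphi^{l_k - k}(B_\eta(\b0))$ shrinks to $\{\b0\}$, so eventually $\|\bx_k\| < 2\eta$, and tile separation contradicts $\bx_k \ne \b0$. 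Thus we may assume $l_k \to \infty$ and, by compactness of $X_{\A,\om}$, extract a subsequence with $\S_k \to \S \in X_{\A,\om}$. If $\bx_k$ stays bounded, a further subsequence has $\bx_k \to \bx$; passing to the limit in the agreement on balls of radius $\lambda_1^{l_k}\eta \to \infty$ yields $\S = \S - \bx$, which by non-periodicity of $\om$ forces $\bx = \b0$, reducing once more to tile separation.

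The main obstacle is the remaining case: $l_k \to \infty$ with $\|\bx_k\|$ unbounded. Here I would work at the $\om^{l_k}$-preimage scale. By Lemma~\ref{onto}, pick $\S_k^* \in X_{\A,\om}$ with $\om^{l_k}(\S_k^*) = \S_k$; the intertwining identity $\om(\T - \bz) = \om(\T) - \varphi(\bz)$ iterated $l_k$ times gives $\om^{l_k}(\S_k^* - \bv_k) = \S_k - \bx_k$ where $\bv_k := \varphi^{-l_k}(\bx_k)$. The crucial bound is $\bv_k \in \varphi^{-k}(B_\eta(\b0))$, so $\|\bv_k\| \le \lambda_1^{-k}\eta \to 0$, while $\bv_k \ne \b0$ since $\bx_k \ne \b0$. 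Because $\varphi$ is a homeomorphism preserving disjoint-interior supports, macrotiles $\om^{l_k}(t^*)$ of $\S_k^*$ inherit tile separation at the preimage scale; combined with the agreement of $\S_k$ and $\S_k - \bx_k$ on the inflated ball, this forces---via a matching argument between the two preimages $\S_k^*$ and $\S_k^* - \bv_k$ near the origin---the equality $\bv_k = \b0$ for $k$ large, whence $\bx_k = \b0$, the desired contradiction.

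The delicate step is the matching of the two preimages $\S_k^*$ and $\S_k^* - \bv_k$: a priori they are preimages of \emph{different} tilings ($\S_k$ and $\S_k - \bx_k$) and need not share local structure. Establishing this matching is where primitivity and non-periodicity of $\om$ are used in full force, via a further application of compactness one scale down (bootstrapping the bounded-$\bx_k$ subcase), and this self-similar argument is carried out carefully in~\cite{So}.
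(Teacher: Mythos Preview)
The paper does not give its own proof of this lemma: it is quoted verbatim as a special case of \cite[Lemma~3.2]{So} and used as a black box. So there is nothing in the paper to compare your argument against; your proposal is to be judged on its own.

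Your treatment of the two easy regimes is correct. If $l_k$ stays bounded then $\varphi^{l_k-k}(B_\eta(\b0))$ shrinks to $\{\b0\}$ and tile separation finishes. If $l_k\to\infty$ but $\bx_k$ stays bounded, compactness and non-periodicity yield a periodic limit tiling unless $\bx_k\to\b0$, and then tile separation finishes again. Both steps are clean.

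The genuine gap is in the remaining regime $l_k\to\infty$, $\|\bx_k\|\to\infty$. Your deflation move produces $\S_k^*$ with $\om^{l_k}(\S_k^*)=\S_k$ and observes that $\S_k^*-\bv_k$ is \emph{a} preimage of $\S_k-\bx_k$, with $\bv_k\to\b0$. But to ``bootstrap the bounded-$\bx_k$ subcase'' you would need $\S_k^*$ and $\S_k^*-\bv_k$ to agree on a ball whose radius tends to infinity, and nothing you have written gives this: the hypothesis only says that the \emph{images} $\S_k$ and $\S_k-\bx_k$ agree on $\varphi^{l_k}(B_\eta(\b0))$. Passing from agreement of images to agreement of preimages is exactly a recognizability statement, and this lemma is precisely the engine used in \cite{So} (and in Section~\ref{recognizability} of the present paper) to \emph{prove} recognizability---so invoking it here is circular. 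Your final paragraph concedes this and defers to \cite{So}, which is honest, but it means the proposal is not a self-contained proof: it reduces to the same citation the paper already makes. The actual argument in \cite{So} avoids this circularity by working directly with the supertile structure rather than with abstract $\om$-preimages.
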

\begin{lemma}
\label{lem-period}   There exists $N\in \NN$ such that for any
$\T\in X_{\A,\omega}$, $n\geq 0$ and $\bx,\by\in \RR^d$,
if $P$ is an $X_i$-admissible patch, where $X_i$ is a non-periodic
minimal component, such that
     $$
          P\subseteq \T, \,\,\,\, P+\bx \subseteq \T,   \,\,\,\, \varphi^n(B_\eta(\b0))+\by\subseteq  \supp(P),
           $$
                then
                 $$
                  \bx\in  \varphi^{n-N}(B_\eta(\b0))\ \mbox{implies}\  \bx=\b0.
                       $$
                     \end{lemma}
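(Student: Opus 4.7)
The plan is to reduce the assertion to Lemma~\ref{lema5} applied to the primitive non-periodic substitution $\omega|_{\A_i}$ on each non-periodic minimal component $X_i$. Since there are only finitely many such components, Lemma~\ref{lema5} supplies constants $N_i$, and I take $N := \max_i N_i$.

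First I would dispose of the trivial range $\|\bx\| < 2\eta$: a nonzero $\bx$ of norm less than $2\eta$ forces any tile $t \in P$ to share interior with its translate $t + \bx \in P + \bx$, since each tile's interior contains a ball of radius $\eta$ about its center; this contradicts $t, t+\bx \in \T$. So we may assume $\|\bx\| \geq 2\eta$, which combined with $\bx \in \varphi^{n-N}(B_\eta(\b0))$ and the expansiveness bound $\|\varphi^k(\bx)\| \leq \lambda_2^k \|\bx\|$ forces $n$ above a fixed threshold depending only on $N$, $\lambda_2$, and $\eta$.

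For the substantive range, the task is to exhibit a single tiling $\T_i^* \in X_i$ containing translates of both $P$ and $P + \bx$ differing by exactly $\bx$; after that Lemma~\ref{lema5} applied to $\omega|_{\A_i}$ on $X_i$ with $l = n$ and the translated ball condition $\varphi^n(B_\eta(\b0)) + (\by - \bg) \subseteq \supp(P - \bg)$ directly forces $\bx = \b0$ whenever $\bx \in \varphi^{n - N_i}(B_\eta(\b0))$. For this I would invoke Lemma~\ref{lem-minim}(ii): since $\T$ contains a tile of type $\A_i$ (any tile of $P$ works, as $P$ is $X_i$-admissible), the orbit closure of $\T$ contains all of $X_i$, so that for any prescribed $R$ there is a return vector $\bg$ with $\T - \bg$ agreeing with some $\T_i^* \in X_i$ on $B_R(\b0)$. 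Choosing $R > \diam(\supp(P)) + \|\bx\|$ and arranging $\bg$ to lie near $\supp(P)$, the ball of agreement covers $(\supp(P) \cup \supp(P + \bx)) - \bg$, placing both $P - \bg$ and $(P - \bg) + \bx$ inside $\T_i^*$.

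The hardest step is arranging the return vector $\bg$ to localize near $\supp(P)$ while keeping $R$ large: orbit-closure convergence alone yields $\bg_k$ that may drift to infinity, so one must combine it with the minimality (hence repetitivity) of $X_i$, which guarantees that the $X_i$-admissible patch $P$ recurs in every tiling of $X_i$ with bounded gap, in order to match a copy of $P$ in $\T_i^*$ with the position of $P$ in $\T$. This step essentially amounts to showing that the combined patch $P \cup (P + \bx)$ is $X_i$-admissible, which is forced by the size of the ball $\varphi^n(B_\eta(\b0))$ sitting inside $\supp(P)$ being much larger than $\|\bx\|$ under the hypothesis $\bx \in \varphi^{n-N}(B_\eta(\b0))$ with $N$ chosen large.
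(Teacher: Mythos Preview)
Your overall strategy---reduce to Lemma~\ref{lema5} on each non-periodic minimal component and take $N$ to be the maximum of the resulting constants---is the same as the paper's. The difficulty, as you correctly identify, is transferring the configuration $P, P+\bx \subseteq \T$ from the ambient tiling $\T \in X_{\A,\omega}$ to some tiling $\T' \in X_i$, and this is where your argument has a genuine gap.

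Your orbit-closure/repetitivity route does not close. Knowing that $X_i$ lies in the orbit closure of $\T$ gives return vectors $\bg_k$ with $\T - \bg_k$ close to some $\T_i^* \in X_i$, but you cannot force $\bg_k$ to sit near $\supp(P)$; repetitivity of $X_i$ tells you $P$ recurs inside $\T_i^*$, not that the occurrences of $P$ inside $\T$ align with those return times. You then retreat to the claim that $P \cup (P+\bx)$ is itself $X_i$-admissible, but this is exactly the point at issue, and the size comparison ``ball much larger than $\|\bx\|$'' does not by itself prove it: the tiles of $P+\bx$ whose supports protrude beyond $\supp(P)$ need not lie in any $\T' \in X_i$ extending $P$, since in $\T$ that region may be occupied by tiles from $\A'$ arranged in a way no $X_i$-tiling reproduces.

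The paper sidesteps this cleanly by \emph{shrinking} rather than extending. Fix $k$ with $2B_\eta(\b0) \subseteq \varphi^k(B_\eta(\b0))$, set $N = k + \max_j N_j$, and take $P' := [\varphi^{n-k}(B_\eta(\b0)) + \by]^{\T} \subseteq P$. The ball computation
\[
\varphi^{n-k}(B_\eta(\b0)) + \bx \subseteq \varphi^{n-k}(B_\eta(\b0)) + \varphi^{n-N}(B_\eta(\b0)) \subseteq \varphi^{n-k}(2B_\eta(\b0)) \subseteq \varphi^n(B_\eta(\b0))
\]
shows $\supp(P'+\bx) \subseteq \supp(P)$, hence $P'+\bx \subseteq P$ (both are subpatches of $\T$). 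Now pick any $\T' \in X_i$ with $P \subseteq \T'$; then $P' \subseteq \T'$, $P'+\bx \subseteq \T'$, and $\varphi^{n-k}(B_\eta(\b0)) + \by \subseteq \supp(P')$, so Lemma~\ref{lema5} (with $l = n-k$) gives $\bx = \b0$. The missing idea in your proposal is precisely this passage to a sub-patch that keeps the translate inside $P$, which makes the transfer to $X_i$ automatic.
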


\begin{proof}
Let $k\in \NN$ be such that $2B_\eta(\b0) = B_\eta(\b0)+B_\eta(\b0) \seq
\varphi^k(B_\eta(\b0))$. We can take $k =\lceil \log 2/\log \lam_1
\rceil$ by (\ref{eq-norm}). By Lemma \ref{lema5}, for $j\in
\{1,\cdots, m\}$, there exists $N_j\in \NN$ such that if  $Q$ is
an  $X_j$-admissible patch satisfying
$$
 Q\subseteq \T', \,\,\,\, Q+\bw \seq \T',   \,\,\,\, \varphi^n(B_\eta(\b0))+\bv\subseteq  \supp(Q),
 $$
  for some $\bw\in \RR^d\setminus\{\b0\}$,  $\bv\in \RR^d$ and $\T'\in X_j$, then
   $$
    \bw\notin \varphi^{n-N_j}(B_\eta(\b0)).
     $$
We claim that the desired statement holds for $N=k+\max\{N_j:  1\leq j\leq m \}$.
Let $\T\in X_{\A,\omega}$, $\bx\in \RR^d\setminus \{\b0\}$ and
$P$ be an  $X_i$-admissible patch such that
   $$
        P\subseteq \T, \,\,\,\, P+\bx \seq \T,   \,\,\,\, \varphi^n(B_\eta(\b0))+\by\subseteq  \supp(P),
         $$
         for some $n\geq 0$ and $\by\in \RR^d$.
         Further, suppose that $\bx\in \varphi^{n-N}(B_\eta(\b0))\setminus \{\b0\}$. Clearly, $n>N$, since every tile contains a ball of radius $r$, so shifting a tile
         by a vector in $B_\eta(\b0)$ will result in a tile intersecting the original one, making $P, P+\bx\seq \T$ impossible.
Since $P$ is $X_i$-admissible, there exists $\T'\in X_i$ such that $P\seq \T'$. Consider
$$
P':= [\varphi^{n-k}(B_\eta(\b0))+\by]^{\Tk} \seq \Tk'.
$$
We want to apply Lemma~\ref{lem-period} to $P'$ and $\T'$. The
only thing we need to check is that $P'+\bx \seq \T'$. This
will follow if we show that $P'+\bx\seq P$, and to verify the
latter it suffices to check that
$$
\varphi^{n-k}(B_\eta(\b0))+\by+\bx \seq \varphi^n(B_\eta(\b0)) +
\by.
$$
However,
$$
\varphi^{n-k}(B_\eta(\b0))+ \bx \seq \varphi^{n-k}(B_\eta(\b0))+
\varphi^{n-N}(B_\eta(\b0)) \seq \varphi^{n-k}(B_\eta(\b0))+
\varphi^{n-k}(B_\eta(\b0)) \seq \varphi^n(B_\eta(\b0))
$$
by the definition of $k$. The proof is complete.
\end{proof}

\begin{proof}[Proof of Proposition~\ref{prop-principal2}]
Suppose that $\Tk, P$, and $\bx$ are as in the statement of
the proposition, with some $M\ge 1$, which will be determined below.
Suppose that $\bx\ne \b0$. Let $t$ be a tile of the patch $Z$ of
type $\Anonp$. Its support contains the ball
$B_\eta(\by)$ for some $\by\in \R^d$. Then
$$
\supp(P) = \supp(\om^n(Z))\supseteq \supp(\om^n(t))\supseteq
\varphi^n(B_\eta(\by)).
$$
Let $n_0\ge 0$ be the smallest integer satisfying $ \eta\lam_1^{n_0}
\ge R_0 $, that is,
$
n_0=\left \lceil \log_{\lambda_1}\left( \frac{R_0}{\eta}\right)\right
\rceil.
$
Here $R_0$ is the constant from Lemma~\ref{lem-claim1}.
Let $\Sk$ be any tiling in $X_{\A,\om}$ satisfying
$\om^{n-n_0}(\Sk) = \Tk$. Then $\varphi^{n_0}(B_\eta (\by))$
contains a ball of radius $R_0$, hence an $X_i$-admissible tile
$t'\in \Sk$, for some minimal component $X_i$, $1 \le i \le m$, by
Lemma~\ref{lem-claim1}. Therefore, $\varphi^n(B_\eta (\by))$,
and hence the patch $\om^n(t)\seq P$, contains an $X_i$-admissible
patch $P'=\om^{n-n_0}(t')\subseteq \Tk$. Now there are two cases.
If $X_i$ is non-periodic, then we apply Lemma~\ref{lema5} to
conclude that $\bx=\b0$, provided $M \ge N + n_0$ (where $N$ is
from Lemma~\ref{lem-period}). {\bf This concludes the proof in the
case when the substitution $\om$ is non-periodic.}

\medskip

It remains to treat the case when $X_i$ is periodic. The idea is
the following: since $P,P+\bx\seq \Tk$, we have that $P'+
\bx\seq P$ as long as $\supp(P'+ \bx)\seq \supp(P)$. Then
$P'+ \bx$ is also $X_i$-admissible. We can continue in this
manner as long as the translates of $P'$ by a multiple of $\bx$
remain in $\supp(P)$, and this works for individual tiles as well,
not necessarily for the entire $P'$. If $\bx$ is small
relative to the size of $P'$, we will obtain an entire ``tube''
from $P'$ to the border of $\om^n(t)$ which is $X_i$-admissible.
But this will lead to a contradiction with the non-periodic border
assumption. Now for the details. A slight complication arises because of the possibility that the interior of a tile is disconnected, so we actually take the ``connected component" of $P'$.

Let us continue with the proof of the proposition. We can assume that
$M\ge n_0$. Clearly, the assumptions imply $n\ge M$ (since
$\bx\in \varphi^{n-M}(B_\eta(\b0))$ is a non-zero translation
between two tiles in $\Tk$, and every prototile contains $B_\eta(\b0)$
in the interior of its support). Recall that
$P'=\varphi^{n-n_0}(t')$ is $X_i$-admissible, where $X_i$ is a
periodic minimal component, $P'\seq \om^n(t)\seq P$, and $t$ is a
tile of type $\Anonp.$ It follows by induction from
(\ref{eq-bord1}) that
\begin{equation} \label{eq-bord2}
\partial (\supp(\om^n(t)))\seq \supp(\om^n(t)|_{\rm nonp}).
\end{equation}
The tile $t'$ contains a ball $B_\eta(\bz)$ for some $\bz\in
\R^d$, hence $\varphi^{n-n_0}(B_\eta(\bz))\seq\supp(P')$. Consider
$$
V:= \mbox{\ the component of \ \ $\Int(\supp(\om^{n-n_0}(t')))$ \ \ containing $\varphi^{n-n_0}(B_\eta(\bz))$}.
$$
Clearly $[V]^\Tk\seq P'$, so all its tiles are of type $\Anonp$. Note that
$V\cap (V+\bx)\ne \es$ because
$$
\varphi^{n-n_0}(B_\eta(\bz))\cap
(\varphi^{n-n_0}(B_\eta(\bz))+\bx)\ne \es\ \ \mbox{for}\
\bx\in \varphi^{n-M}(B_\eta(\b0))\seq \varphi^{n-n_0}(B_\eta(\b0)).
$$
Let $k\ge 0$ be the largest integer such that $V+k\bx\seq\supp(\om^n(t))$. Then $[V]^\Tk+(k+1)\bx\seq \Tk$,
because $\om^n(t)\seq P$ and $P+\bx\seq \Tk$. Moreover,
$$
V+(k+1)\bx\not\seq \supp(\om^n(t))\ \ \mbox{and}\ \  (V+k\bx)\cap
(V+(k+1)\bx)\ne \es.
$$
It follows that $V+(k+1)\bx$ contains a point $\bz_1$ in the interior of a tile in $\Tk\setminus \om^n(t)$ and
$V + k\bx$ contains a point $\bz_2$ in the interior of a tile in $\om^n(t)$. The set
$$W:=(V+k\bx)\cup(V+(k+1)\bx)$$
is open and connected, being a union of two open connected sets with non-empty intersection. Thus it is arcwise connected. An arc in $W$ with the endpoints $\bz_1$ and $\bz_2$ must intersect the boundary of $\supp(\om^n(t))$.
The point of intersection belongs to a tile,
say, $t''$, of type $\Anonp$ by (\ref{eq-bord2}).
This tile is in $[W]^\Tk$, but the types of all tiles in $[W]^\Tk$ are those of the tiles in $[V]^\Tk$, hence they are $X_i$-admissible.
Thus, $t''$ is of type
$\Aper$, which is a contradiction.
\end{proof}
\begin{lemma} \label{lem-finite-p}
There exists $N\in \NN$ such that for any patch $Z$ and
$\by\in \R^d$ such that $Z-\by\in \Fnonp$, we have
$\P_{n+1}(Z) = \P_n(Z)$ for all $n\ge N$.
\end{lemma}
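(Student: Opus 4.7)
The plan is to show that $\P(Z) := \bigcup_{n \ge 0} \P_n(Z)$ is a finite \emph{set}; once this is known, the ascending chain property from Lemma \ref{lema1-p}(ii) forces stabilization at some threshold $N(Z)$. A uniform $N$ is then obtained by taking the maximum of $N(Z)$ over a finite set of translation representatives for $\Fnonp$, using the obvious equivariance $\P_n(Z+\by) = \P_n(Z) + \by$.

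Fix such a representative $Z$. By Lemma \ref{lema1-p}(i), every $P \in \P(Z)$ satisfies $\supp(P) \subseteq B_{2\gam}(\supp(Z))$, so FPC already supplies finitely many translation classes of such patches. The crux is to show that within each translation class, only finitely many representatives can appear in $\P(Z)$; this is where Proposition \ref{prop-principal2} will do the real work.

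Suppose $P+\bg_1$ and $P+\bg_2$ are two distinct representatives of a common translation class in $\P(Z)$, and set $\bg := \bg_2 - \bg_1 \neq \b0$. Choose $m$ large enough that both patches lie in $\P_m(Z)$. Then $\om^m(Z) \seq \om^m(P+\bg_1)$ and $\om^m(Z) \seq \om^m(P+\bg_1) + \varphi^m(\bg)$; equivalently, $\om^m(Z) - \varphi^m(\bg) \seq \om^m(P+\bg_1)$. By admissibility, pick $\Tk \in \Xa$ containing $\om^m(P+\bg_1)$; then $\Tk$ contains both $\om^m(Z)$ and its translate by $-\varphi^m(\bg)$. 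Proposition \ref{prop-principal2}, applied with $n = m$ and $\bx = -\varphi^m(\bg)$, forces $\bx = \b0$ whenever $\bx \in \varphi^{m-M}(B_\eta(\b0))$, which rewrites as $\varphi^M(\bg) \in B_\eta(\b0)$. Since $\bg \neq \b0$, we must have $\|\varphi^M(\bg)\| > \eta$, and by (\ref{eq-norm}) this yields the uniform separation $\|\bg\| > \eta/\lam_2^M$.

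Thus any two distinct representatives of the same translation class are separated by a fixed positive constant depending only on $\om$, and since all representatives are confined to the bounded region $B_{2\gam}(\supp(Z))$, each class contributes only finitely many elements. Combined with the finiteness of the number of classes, $\P(Z)$ is finite, and we let $N(Z)$ be the smallest $n$ with $\P_n(Z) = \P(Z)$. The step I expect to be most delicate is the reduction of the pair $P+\bg_1$, $P+\bg_2$ to a configuration in a single tiling $\Tk$ to which Proposition \ref{prop-principal2} applies; this is the reason we subtract $\bg_1$ and work with the difference $\bg_2 - \bg_1$, using the admissibility of $\om^m(P+\bg_1)$ as the anchoring patch which simultaneously contains both copies of $\om^m(Z)$.
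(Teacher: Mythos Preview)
Your proof is correct and follows essentially the same approach as the paper: bound the supports via Lemma~\ref{lema1-p}(i), use Proposition~\ref{prop-principal2} to get a uniform lower bound $\eta\lam_2^{-M}$ on the separation between distinct translates of a patch appearing in $\P(Z)$, combine with FPC to conclude $|\P(Z)|<\infty$, and finish with the ascending chain from Lemma~\ref{lema1-p}(ii). The only cosmetic difference is that the paper bounds $|\P_n(Z)|$ by an explicit constant $CN_1$ independent of $n$ and of the representative $Z$ (so $N=CN_1$ works directly), whereas you argue finiteness of $\P(Z)$ first and then pass to a uniform $N$ via the equivariance $\P_n(Z+\by)=\P_n(Z)+\by$ and finiteness of $\Fnonp$; both routes are fine.
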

\begin{proof}
Let $n\ge 0$ and $P \in \P_n(Z)$. Suppose that $\bx\in
\R^d\setminus \{\b0\}$ is such that $P+\bx \in \P_n(Z)$. We have
$$
\om^{n}(Z) \subseteq \om^n(P)\ \ \ \mbox{and}\ \ \ \om^n(Z) -
\varphi^n(\bx) \subseteq \om^n(P).
$$
We conclude from Lemma~\ref{lem-period} that $\varphi^n(\bx)
\not\in \varphi^{n-M}(B_\eta(\b0))$, hence $\bx\not\in
\varphi^{-M}(B_\eta(\b0))$, which implies
$$
\|\bx\| \ge \eta \lam_2^{-M}.
$$
From part (i) of Lemma~\ref{lema1-p} it follows that the supports
of translated copies of $P$ which belong to $\P_n(Z)$, are
contained in $B_{2\gam}(\supp(Z))$. Recall that $Z-\by\in
\Fnonp$ is a ``collared tile'' hence $\diam(Z) \le 3\gam$. Thus,
there are at most
$$
\frac{\vol(B_{5\gam+\eta \lam_2^{-M}/2}(\b0))}{\vol(B_{\eta \lam_2^{-M}/2})(\b0)}=:N_1
$$
copies of the patch $P$ in $\P_n(Z)$. The FPC ensures that there are finitely many patches in $\Fnonp$, up to translation.
Also by FPC, there are
at most $C$ patches, up to translation, whose support is contained in $B_{2\gam}(\supp(Z))$ for some $Z\in \Fnonp$. From this we deduce that
$\P_n(Z)$ has at most $CN_1$ elements. Since this is valid for every $n\ge 0$, from part (ii) of Lemma~\ref{lema1-p} it follows that $|\P(Z)| \le
CN_1.$
\end{proof}
We continue with the scheme of \cite{HRS}.
\begin{lemma} \label{lem-easy1}
Suppose $\P_n(W) = \P_{n+1}(W)$, where $W$ is a legal patch. If
$\S\in X_{\A,\om}$ is such that $\om^{n+1}(W) \subseteq \om(\S)$,
then $\om^n(W) \seq \S$.
\end{lemma}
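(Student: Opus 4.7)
The plan is to lift $\S$ through $\om^n$ and then reduce the claim to a direct application of the hypothesis $\P_n(W)=\P_{n+1}(W)$. Concretely, I would first invoke Lemma~\ref{onto} (iterated $n$ times) to produce $\S'\in X_{\A,\om}$ with $\om^n(\S')=\S$; such an $\S'$ exists since each power of $\om$ is surjective on $X_{\A,\om}$. Consequently $\om(\S)=\om^{n+1}(\S')$, so the hypothesis $\om^{n+1}(W)\seq\om(\S)$ rewrites as $\om^{n+1}(W)\seq\om^{n+1}(\S')$.

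Next, I would select a finite subpatch $P\seq\S'$ that is minimal with the property $\om^{n+1}(W)\seq\om^{n+1}(P)$: start with the (finitely many) tiles of $\S'$ whose $\om^{n+1}$-images meet $\supp(\om^{n+1}(W))$ and prune tiles as long as the containment persists. The resulting $P$ is automatically $X_{\A,\om}$-admissible (being a subpatch of $\S'\in X_{\A,\om}$), so it belongs to $\P_{n+1}(W)$ by construction. The hypothesis $\P_n(W)=\P_{n+1}(W)$ then hands us $P\in\P_n(W)$, which is exactly the statement $\om^n(W)\seq\om^n(P)$. Since $P\seq\S'$, we conclude $\om^n(P)\seq\om^n(\S')=\S$, hence $\om^n(W)\seq\S$, as required.

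The only place where one might look for trouble is the lift $\S=\om^n(\S')$, which depends on the surjectivity of $\om:X_{\A,\om}\to X_{\A,\om}$ (Lemma~\ref{onto}) and thus on admissibility of the substitution; but this has already been established. Once the lift is in place, the argument is essentially bookkeeping: the hypothesis $\P_n(W)=\P_{n+1}(W)$ does all the work of converting a statement at level $n+1$ into one at level $n$, and then subpatch-monotonicity of $\om^n$ transports the conclusion from $\S'$ down to $\S$.
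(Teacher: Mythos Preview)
Your proof is correct and follows essentially the same approach as the paper: lift $\S$ to $\S'$ with $\om^n(\S')=\S$ via surjectivity, extract a minimal subpatch $P\in\P_{n+1}(W)$ from $\S'$, use the hypothesis to get $P\in\P_n(W)$, and push back down. The paper's version is more terse (it simply asserts the existence of $P\in\P_{n+1}(W)$ with $P\seq\S'$ without spelling out the pruning), but the argument is the same.
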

\begin{proof}
Let $\S'\in X_{\A,\om}$  be such that $\om^n(\S') = \S$. Then
$\om^{n+1}(W)\seq \om(\S) = \om^{n+1}(\S')$, hence there exists $P
\in \P_{n+1}(W)$ such that $P\seq \S'$. Since $P\in \P_n(W)$ we
have $\om^n(W) \seq \om^n(P) \seq \om^n(\S') = \S$.
\end{proof}
{\em Proof of Theorem~\ref{th-recog2}.} Let $\T_1 \in X_{\A,\om}$
be such that $\om(\T_1) =\T$, and further, suppose $\T_n \in
X_{\A,\om}$ are such that $\om(\T_n) = \T_{n-1}$ for $n\ge 2$.
Let $t_n \in \T_n$ be such that $\supp(t_n) \ni \b0$, and let $Z_n =
[\supp(t_n)]^{\T_n}$. Then $\om^n(Z_{n+1}) \seq \T_1$ and
$$
\bigcup_{n\ge 1} \supp(\om^n(Z_{n+1})) = \R^d,
$$
hence $Z_k$ are of type $\Fnonp$ for $k$ sufficiently large
(otherwise, $\T_1$ and $\T = \om(\T_1)$ contain only tiles from $\Aper$ contradicting our assumption),
that is, there exists $k_0$ such that $Z_k - \bz_k
\in \Fnonp$ for $k\ge k_0$.
We want to show that $\T_1$, with $\om(\T_1) = \T$, is uniquely
determined. To this end, consider any $\T'$, with $\om(\T') = \T$.
We have for $n\ge \max\{k_0,N\}$, by Lemma~\ref{lem-finite-p},
that $\P_{n+1}(Z_{n+1})= \P_n(Z_{n+1})$, hence by
Lemma~\ref{lem-easy1},
$$
\om^{n+1}(Z_{n+1}) \seq \T=\om(\T')\ \Longrightarrow\
\om^n(Z_{n+1}) \seq \T'.
$$
Therefore, $\T'$ contains the patches $\om^n(Z_{n+1})\seq \T_1$
for all $n$ sufficiently large, and these patches exhaust the
entire tiling. Thus, $\T'=\T_1$, as desired. \qed
\section{Infinite invariant measures.}
\subsection{Non-negative matrices revisited}
We use the notation and results from Sections 2.5 and 2.6.
\begin{definition}{ Let $M\in \M_{k\times k}(\ZZ_+)$.  The {\em infinite core}
of $M$ is the set of all the vectors in
$core_{\infty}(M)=\bigcap_{n\geq 1}M^n(\overline{\RR}_+^k)$ where $\ov{\RR}_+ := \RR_+\cup \{\infty\}$.}
\end{definition}
We saw in Section 3 that $core(M)$ is isomorphic to the set of finite invariant measures. Here we will show that $core_{\infty}(M)$ is closely
related to the
set of ``nice'' invariant $\sig$-finite measures, under some mild assumptions. The goal of this subsection is to describe the infinite core.

\begin{lemma}
\label{help-lemma} Let $M_1$ and $M_2$ be two non-negative square
matrices of dimensions $n_1$ and $n_2$ respectively, such that
$M_1$ is primitive and $M_2$ has a positive eigenvalue $\rho_2>0$
associated to a positive eigenvector $\bv_2$. Let $C\neq 0$ be
a non-negative $n_1\times n_2$-dimensional matrix  and let
$\rho_1$ be the Perron eigenvalue of $M_1$. If there exists a
vector $\bx\in \overline{\RR}_+^{n_1}$  such that
$$
\left(\begin{array}{c}
  \bx \\
  \bv_2 \\
\end{array}\right)\mbox{ is in the infinite core of } M= \left(%
\begin{array}{cc}
  M_1 & C \\
  0 & M_2 \\
\end{array}%
\right),
$$
then $\bx=\infty$ whenever $\rho_1\geq \rho_2$.
\end{lemma}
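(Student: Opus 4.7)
Plan: I extract a coherent preimage chain of $(\bx,\bv_2)^T$ under $M$, then apply a positive left Perron eigenvector of $M_1$ to derive a scalar identity whose right-hand side diverges for $\rho_1\ge\rho_2$, forcing $\bx=\infty$. Since $\ov{\RR}_+^{n_1+n_2}$ is compact and $M$ continuous, $\mathrm{core}_\infty(M)=M(\mathrm{core}_\infty(M))$; iterated pullback yields $(\bu_n,\bw_n)_{n\ge 0}\subset\mathrm{core}_\infty(M)$ with $(\bu_0,\bw_0)=(\bx,\bv_2)$ and $M(\bu_n,\bw_n)^T=(\bu_{n-1},\bw_{n-1})^T$, giving the recurrences $M_2\bw_n=\bw_{n-1}$ and $M_1\bu_n+C\bw_n=\bu_{n-1}$. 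Finiteness of $\bv_2$ together with the mild assumption that every column of $M_2$ is nonzero (automatic in the tiling application) makes each $\bw_n$ finite. Now suppose for contradiction $\bx\ne\infty$: by primitivity of $M_1$, $M_1^{n_0}>0$ for some $n_0$, so the telescoped equation $\bx=M_1^{n_0}\bu_{n_0}+\sum_{k=1}^{n_0}M_1^{k-1}C\bw_k$ shows that any infinite coordinate of $\bu_{n_0}$ would force every coordinate of $\bx$ to be $\infty$; symmetrically, any infinite coordinate of $\bx$ would propagate backwards to $\bu_{n_0}$ and then forwards to force $\bx$ entirely infinite. Hence $\bx$ is entirely finite and each $\bu_n$ is entirely finite.

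Let $\mathbf{l}_1>0$ satisfy $\mathbf{l}_1^T M_1=\rho_1\mathbf{l}_1^T$ and put $\mathbf{c}=C^T\mathbf{l}_1\ge 0$; then $\mathbf{c}\ne 0$ (since $C\ne 0$ and $\mathbf{l}_1>0$) and $\mathbf{c}^T\bv_2>0$. Applying $\mathbf{l}_1^T$ to the $\bu$-recurrence and iterating from $n=0$ gives
\[
\mathbf{l}_1^T \bx \;=\; \rho_1^n\mathbf{l}_1^T\bu_n \;+\; \sum_{k=1}^n \rho_1^{k-1}\mathbf{c}^T\bw_k,
\]
with both right-hand summands non-negative. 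Hence $\sum_{k=1}^\infty \rho_1^{k-1}\mathbf{c}^T\bw_k\le\mathbf{l}_1^T\bx<\infty$. The key inequality is $\mathbf{c}^T\bw_k\ge\rho_2^{-k}\mathbf{c}^T\bv_2$: the coherent chain places $\bw_k\in\mathrm{core}(M_2)$, which by Theorem~\ref{generated-core} is the cone generated by the distinguished eigenvectors of $M_2$. Since $\bv_2>0$ strictly, Theorem~\ref{Perron-Frobenius}(ii) implies the class to which $\bv_2$ corresponds is the unique distinguished class with spectral radius $\rho_2=\rho(M_2)$, so other distinguished eigenvectors $\bv_{2,j}$ have eigenvalues $\lambda_j<\rho_2$. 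Writing $\bw_k=\gamma_{k,1}\bv_2+\sum_{j>1}\gamma_{k,j}\bv_{2,j}$ with $\gamma_{k,j}\ge 0$ and pairing with a non-negative left PF eigenvector $\mathbf{m}_2\ge 0$ of $M_2$ for $\rho_2$, biorthogonality $\mathbf{m}_2^T\bv_{2,j}=0$ for $j>1$ combined with $\mathbf{m}_2^T\bw_k=\rho_2^{-k}\mathbf{m}_2^T\bv_2$ (from iterating $\mathbf{m}_2^T M_2=\rho_2\mathbf{m}_2^T$ along the chain) yields $\gamma_{k,1}=\rho_2^{-k}$; the inequality then follows since $\mathbf{c}$ and all $\bv_{2,j}$ are non-negative.

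With the bound in hand, $\sum_{k=1}^\infty \rho_1^{k-1}\mathbf{c}^T\bw_k\ge(\mathbf{c}^T\bv_2/\rho_2)\sum_{k=1}^\infty(\rho_1/\rho_2)^{k-1}$, which diverges whenever $\rho_1\ge\rho_2$—contradicting the finite upper bound and completing the proof. The main obstacle is the lower bound on $\mathbf{c}^T\bw_k$, which marries the projective-limit construction with the reducible Perron--Frobenius theory (Theorems \ref{Perron-Frobenius} and \ref{generated-core}); once strict positivity of $\bv_2$ is seen to force uniqueness of the distinguished class at the top of $\rho(M_2)$, the biorthogonality argument runs smoothly.
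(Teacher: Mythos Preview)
Your argument has a genuine gap at the key inequality $\mathbf{c}^T\bw_k \ge \rho_2^{-k}\,\mathbf{c}^T\bv_2$. You assert that, because $\bv_2>0$ strictly, Theorem~\ref{Perron-Frobenius}(ii) forces $\bv_2$ to be the distinguished eigenvector of a \emph{unique} distinguished class at level $\rho_2=\rho(M_2)$, with all other distinguished eigenvalues $\lambda_j<\rho_2$. This is not what that theorem says, and it can fail outright: take $M_2=2I_2$ and $\bv_2=(1,1)^T$. Both singleton classes are distinguished with spectral radius $2$, and $\bv_2$ is not a distinguished eigenvector at all but the sum $(1,0)^T+(0,1)^T$. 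Your biorthogonality step $\mathbf{m}_2^T\bv_{2,j}=0$ for $j>1$ then collapses, since left and right eigenvectors at the \emph{same} eigenvalue need not be orthogonal. You also import two hypotheses the lemma does not make: that every column of $M_2$ is nonzero, and---via Theorem~\ref{generated-core}---that $M_2$ satisfies~(\ref{eq-irre}).

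The paper's proof sidesteps this entirely by never introducing an uncontrolled $\bw_k$. Instead of a coherent one-step chain, it takes for each $n$ an $M^n$-preimage whose bottom block is \emph{exactly} $\bv_2/\rho_2^n$ (a legitimate solution of $M_2^n\cdot=\bv_2$). This gives directly $\bx\ge C_n\,\bv_2/\rho_2^n$, and a short telescoping computation rewrites the right side as $\rho_2^{-1}\bigl[\sum_{k=0}^{n-1}(\rho_1/\rho_2)^k(M_1/\rho_1)^k\bigr]C\bv_2$, which diverges because $(M_1/\rho_1)^k$ tends to a strictly positive rank-one matrix and $C\bv_2\ne 0$. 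No structure theory of $\mathrm{core}(M_2)$ is needed. If you wish to salvage your route, a correct replacement for the biorthogonality step is: write $\bw_k=\sum_j\gamma_{k,j}\bv_{2,j}$ and $\bv_2=\sum_j\delta_j\bv_{2,j}$ in the distinguished eigenvectors, so that $M_2^k\bw_k=\bv_2$ forces $\gamma_{k,j}=\delta_j\lambda_j^{-k}$; then $\lambda_j\le\rho_2$ yields $\mathbf{c}^T\bw_k\ge\rho_2^{-k}\mathbf{c}^T\bv_2$ termwise. But this still requires (\ref{eq-irre}) for $M_2$ and linear independence of its distinguished eigenvectors, so it proves a weaker statement than the lemma as written.
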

\begin{proof}
Let $n>0$, $C_n$ and $\bx_n\in \overline{\RR}^{n_1}_+$ be such
that
$$
\left(%
\begin{array}{cc}
  M_1 & C \\
  0 & M_2 \\
\end{array}%
\right)^n = \left(\begin{array}{cc}
  M_1^n & C_n \\
  0 & M_2^n \\
\end{array}%
\right )
$$
and
$$
\left(\begin{array}{c}
  \bx \\
  \bv_2 \\
\end{array}\right)=M^n\left(\begin{array}{c}
      \bx_n \\
  \frac{\bv_2}{\rho_2^n} \\
\end{array}\right).
$$
Using the symbol ``$\ge$'' to denote the natural (component-wise)
partial order on vector spaces, we have
\begin{equation}
\label{Section6-eq1}
\bx=M_1^{n}\bx_n+C_n\frac{\bv_2}{\rho_2^n}\geq
C_n\frac{\bv_2}{\rho_2^n},
\end{equation}
and
\begin{eqnarray*}
C_{n+1}\frac{\bv_2}{\rho_2^{n+1}} & =
&\frac{1}{\rho_2^{n+1}}M_1^nC\bv_2+C_n\frac{\bv_2}{\rho_2^{n}}\\
  &=&
  \frac{1}{\rho_2}\left(\frac{\rho_1}{\rho_2}\right)^n\left(\frac{M_1}{\rho_1}\right)^nC\bv_2+
  C_n\frac{\bv_2}{\rho_2^n}\\
  &=&
  \frac{1}{\rho_2}\left[\sum_{k=0}^n\left(\frac{\rho_1}{\rho_2}\right)^k\left(\frac{M_1}{\rho_1}\right)^k\right]C\bv_2.
\end{eqnarray*}
Thus if $\rho_1\geq \rho_2$, we have
\begin{equation}
\label{Section6-eq2} C_{n+1}\frac{\bv_2}{\rho_2^{n+1}}\geq
\frac{1}{\rho_2}\left[\sum_{k=0}^n\left(\frac{M_1}{\rho_1}\right)^k\right]C\bv_2,
\end{equation}
which tends to $\infty$ with $n$, because $C\bv_2\neq \b0$ and
$\lim_{n\to \infty}(M_1/\rho_1)^n=\bw\bv>0$, where
$\bw$ and $\bv$ are   left and right  Perron eigenvectors
of $M_1$ respectively (see \cite[Theorem 8.5.1]{HJ}). Then from
equations (\ref{Section6-eq1}) and (\ref{Section6-eq2}) we
conclude that  $\bx=\infty$ when $\rho_1\geq \rho_2$.
\end{proof}

\medskip

Suppose that $\alpha_1,\cdots,\alpha_l$ are the equivalence
classes associated to the matrix $M$. For
every $1\leq i\leq l$ we denote by  $M_i$ the restriction of $M$ to the
class $\alpha_i$. We assume that $M_i$ is primitive or equal to
$[0]$. When $M_i$ is primitive  we denote by $\rho_i$ the Perron
eigenvalue of $M_i$; if $M_i=[0]$ then $\rho_i=0$.
\begin{definition}
\label{definition1} {Let $M$ be a non-negative integer square
matrix with irreducible components $M_1,\cdots, M_l$. For every
$1\leq i\leq l$ such that $M_i$ is primitive we consider
\begin{enumerate}
\item $\J_i$---the set of indices $j\in\{1,\cdots,
l\}\setminus\{i\}$ such that the class $\alpha_j$ has access to a
class $\alpha_k$, where $\alpha_k$ has access to $\alpha_i$ and
$\rho_k\geq \rho_i$.
\item $\I_i$---the set containing $i$ and all the indices
$j\in\{1,\cdots,l\}\setminus \J_i$ such that the class $\alpha_j$
has access to the class $\alpha_i$ (then necessarily $\rho_j<\rho_i$).
\end{enumerate}}
\end{definition}
\begin{remark}
{\rm The classes $\alpha_j$, with $j\in\I_i$ do not have access to the
classes with indices in $\J_i$. The complement of $(\I_i\cup
\J_i)$ is the set of $j$ such that $\alpha_j$ does not
have access to $\alpha_i$.}
\end{remark}
Let $1\leq i\leq l$ be such that $M_i$ is primitive. The class
$\alpha_i$ is distinguished with respect to $M|_{\I_i}$, the
restriction of $M$ to the set of indices in $\I_i$. Then Theorem \ref{Perron-Frobenius}
implies that there exists a unique $|\I_i|$-dimensional normalized
positive vector $\bw_i$ such that
$M|_{\I_i}\bw_i=\rho_i\bw_i$. The restriction of
$\bw_i$ to $\alpha_i$ is an eigenvector of $M_i$ associated to
$\rho_i$.
\begin{definition}
\label{definition2} { For every $1\leq i\leq l$ such that $M_i$ is
primitive, we define $\by_i$ in $\overline{\RR}_+^k$ as
follows:
\begin{itemize}
\item The restriction of $\by_i$ to $\I_i$ is equal to
$\bw_i$.
\item The restriction of $\by_i$ to $\J_i$ is $\infty$ in every component.
\item The restriction of $\by_i$ to $(\I_i\cup
\J_i)^c$ is zero.
\end{itemize}
For every $1\leq i\leq l$, we define $\bz_i$ in
$\overline{\RR}_+^k$ as the vector whose restriction to
$\I_i\cup\J_i$ is infinite, and $\bz_i$ restricted to
$(\I_i\cup\J_i)^c$ is zero.
If $M_i=[0]$ we define $\by_i=\b0$.}
\end{definition}
 \begin{lemma}\label{infinite-distinguished}
Let $M$ be a non-negative integer $k\times k$ matrix with
irreducible components $M_1,\cdots, M_l$. For every $1\leq i\leq
l$  we have $\by_i\in core_\infty(M)$.
 \end{lemma}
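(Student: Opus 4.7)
If $M_i=[0]$ then $\by_i=\b0$ trivially lies in $core_\infty(M)$, so assume $M_i$ is primitive with Perron eigenvalue $\rho_i>0$. Set $R:=(\I_i\cup\J_i)^c$, and identify each index set with the union of vertices in the corresponding classes. The plan is to construct, for each $n\ge 1$, a vector $\bu_n\in\overline{\RR}_+^k$ satisfying $M^n\bu_n=\by_i$, by declaring
\[
\bu_n|_{\I_i}=\rho_i^{-n}\bw_i,\qquad \bu_n|_{\J_i}=\infty,\qquad \bu_n|_{R}=0.
\]
Two structural facts will be used repeatedly: (a) $R$ does not access $\I_i\cup\J_i$ (otherwise, by transitivity, a class in $R$ would access $\alpha_i$, contradicting the definition of $R$); and (b) $\I_i$ does not access $\J_i$ (the Remark following Definition~\ref{definition1}). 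In particular, walks in $G(M)$ starting in $R$ remain in $R$, walks starting in $\I_i$ remain in $\I_i\cup R$, and a walk that both starts and ends in $\I_i$ stays in $\I_i$ throughout.

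The verification on the $R$- and $\I_i$-blocks is then routine. For $r\in R$, every $j$ with $M^n(r,j)>0$ lies in $R$, where $\bu_n$ vanishes, giving $(M^n\bu_n)_r=0=(\by_i)_r$. For $r\in\I_i$, the $R$-endpoints contribute $0$ and the $\I_i$-endpoints contribute $((M|_{\I_i})^n\rho_i^{-n}\bw_i)_r=\rho_i^{-n}\rho_i^n(\bw_i)_r=(\bw_i)_r$, using $M|_{\I_i}\bw_i=\rho_i\bw_i$.

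The main obstacle is the $\J_i$-block: I need $(M^n\bu_n)_r=\infty$ for every $\J_i$-vertex $r$ and every $n\ge 1$. Because $\bu_n$ is finite off $\J_i$, it suffices to exhibit, for each such $r$ and $n$, a $G(M)$-walk of length exactly $n$ starting at $r$ and ending in a $\J_i$-vertex; the corresponding summand $M^n(r,j)\cdot\infty$ then forces the whole sum to be $\infty$. I split on the spectral radius $\rho_{j_r}$ of the class $\alpha_{j_r}$ containing $r$. If $\rho_{j_r}\ge\rho_i$, then $\alpha_{j_r}$ is itself a primitive class contained in $\J_i$, hence strongly connected with no dead ends, so walks of every length from $r$ may be taken inside $\alpha_{j_r}$. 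If $\rho_{j_r}<\rho_i$, the definition of $\J_i$ produces an intermediate class $\alpha_k$ with $\alpha_{j_r}\succeq\alpha_k\succeq\alpha_i$, $\rho_k\ge\rho_i$ and $k\neq i$; fix any path $r=w_0\to w_1\to\cdots\to w_\ell=v_k\in\alpha_k$ in $G(M)$. The class of each intermediate $w_p$ accesses $\alpha_k$ via the suffix of the path and therefore accesses $\alpha_i$, so the same $\alpha_k$ witnesses $j_{w_p}\in\J_i$---provided $\alpha_{j_{w_p}}\neq\alpha_i$, which holds because $\alpha_{j_{w_p}}=\alpha_i$ would yield $\alpha_i\succeq\alpha_k$ and thus $\alpha_k=\alpha_i$, contradicting $k\neq i$. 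Hence the whole path lies in $\J_i$-vertices; for $n\le\ell$ we use the prefix $r\to\cdots\to w_n$, while for $n>\ell$ we extend by looping inside the primitive class $\alpha_k\subseteq\J_i$. This supplies a suitable walk for every $n\ge 1$ and completes the proof that $\by_i\in core_\infty(M)$.
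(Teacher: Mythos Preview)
Your proof is correct and follows essentially the same approach as the paper: both construct the identical preimage vectors $\bu_n=\by_{n,i}$ with $\rho_i^{-n}\bw_i$ on $\I_i$, $\infty$ on $\J_i$, and $0$ elsewhere, and verify $M^n\bu_n=\by_i$ blockwise. The only difference is that the paper simply asserts without proof that for every $r\in\J_i$ and $n\ge 1$ there exists $s_n\in\J_i$ with $M^n(r,s_n)>0$, whereas you supply a careful argument for this via the case split on $\rho_{j_r}$ and the verification that intermediate vertices on the path to $\alpha_k$ remain in $\J_i$-classes.
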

\begin{proof}
If $M_i=[0]$ then $\by_i=\b0$ is in $core_\infty(M)$. Then we can
assume that $M_i$ is primitive.

For every $n\geq 1$, we define $\by_{n,i}\in \overline{\RR}_+^k$,
the vector such that
$$
\by_{n,i}|_{\I_i}=\frac{\bw_i}{\rho_i^n}, \hspace{4mm}
\by_{n,i}|_{\J_i}=\infty, \hspace{4mm}\mbox{ and }\hspace{4mm}
\by_{n,i}|_{(\I_i\cup\J_i)^c}=\b0.
$$
For $1\leq r\leq k$,  the $r$-coordinate of $M^n\by_{n,i}$ is equal
to
$$
 \sum_{s=1}^kM^n(r,s)\by_{n,i}(s)=\sum_{s\in\I_i}M^n(r,s)\by_{n,i}(s)+
\sum_{s\in\J_i}M^n(r,s)\by_{n,i}(s).
$$
Thus we have the following:\\
\indent
{\bf 1.} If $r\in (\I_i\cup \J_i)^c$, then $M^n(r,s)=0$ for every
$s\in \I_i\cup \J_i$. This implies the $r$-coordinate of
$M^n\by_{n,i}$  is equal to $\b0=\by_i(r)$.\\
\indent
{\bf 2.} If $r\in \I_i$, then $M^n(r,s)=0$ for every $s\in \J_i$.
This implies that the $r$-coordinate of $M^n\by_{n,i}$ is equal to
$$\sum_{s\in\I_i}M^n(r,s)\by_{n,i}(s)=\bw_i(r)=\by_{i}(r).$$
\indent
{\bf 3.} If $r\in \J_i$, then for every $n\geq 1$ there exists
$s_n\in \J_i$ such that $M^n(r,s_n)>0$. This implies that
$r$-coordinate of $M^n\by_{n,i}$ is  equal to
$M^n(r,s_n)\by_{n,i}(s_n)=\infty=\by_i(r)$.\\
\indent
The statements {\bf 1}, {\bf 2}, and {\bf 3} imply that
$\by_i=M^n\by_{n,i}$, for every $n\geq 1$, which shows
that $\by_i$ is in the infinite core of $M$.
\end{proof}
\begin{lemma}
\label{relation-between-eigenvalues} Let $M$ be a non-negative
integer $k\times k$ matrix with primitive irreducible components
$M_1,\cdots, M_l$. Then every  vector
$\bx \in core_{\infty}(M)$ can be written as
$$
\bx=\sum_{j=1}^l\lambda_j\by_j+
\sum_{j=1}^l\delta_j\bz_j,
$$
where $\lambda_1,\cdots, \lambda_l\geq 0$ and
$\delta_1,\cdots,\delta_l\in\{0,1\}$. Conversely, every vector
written in this way is in $core_{\infty}(M)$.
\end{lemma}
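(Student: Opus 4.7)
The statement has two directions. For the converse, by Lemma \ref{infinite-distinguished} each $\by_j \in core_\infty(M)$, so it remains to show each $\bz_j \in core_\infty(M)$; for this it suffices to check $M\bz_j = \bz_j$ (then $\bz_j = M^n\bz_j$ for all $n$). If $r$ lies in a class $\alpha_k$ with $k \in \I_j \cup \J_j$, primitivity of $M_k$ forces its $r$-row to be nonzero, so $r$ has an out-edge to some $s \in \alpha_k \subseteq \I_j \cup \J_j$ and $(M\bz_j)_r \geq M(r,s)\cdot\infty = \infty$. If $k \notin \I_j \cup \J_j$, any $s$ with $M(r,s)>0$ places $s$ in a class accessed by $\alpha_k$; were this class in $\I_j \cup \J_j$, transitivity would yield $\alpha_k \succeq \alpha_j$, contradicting $k \notin \I_j \cup \J_j$. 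Thus $\bz_j(s)=0$ for every such $s$ and $(M\bz_j)_r = 0$. Since $core_\infty(M)$ is closed under non-negative linear combinations, every $\sum \lam_j \by_j + \sum \delta_j \bz_j$ lies in $core_\infty(M)$.

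For the forward direction, fix $\bx \in core_\infty(M)$ together with $\bx_n \in \overline{\RR}_+^k$ such that $\bx = M^n\bx_n$ for every $n$. The main technical input is: since the diagonal blocks $M_i$ are primitive, there is an $N^*$ such that for every $n \geq N^*$ and every pair of vertices $r, s'$, $M^n(r, s') > 0$ iff the class of $r$ has access to the class of $s'$ (primitivity supplies cycles of all sufficiently large lengths at every vertex, so any access path can be padded to length $n$). Two rigidity consequences follow: \emph{(a)} for any class $\alpha_j$, $\bx|_{\alpha_j}$ is either identically $\infty$ or entirely finite — since any $r, s \in \alpha_j$ have the same access set, a witness $s'$ of $\bx_{n,s'}=\infty$ extracted from $\bx_s = \infty$ also satisfies $M^n(r,s')>0$, forcing $\bx_r = \infty$; and \emph{(b)} the set $\K_\infty := \{j : \bx|_{\alpha_j} \equiv \infty\}$ is upward-closed under $\succeq$ — if $\alpha_k \succeq \alpha_j$ and $j \in \K_\infty$, the access set of $r \in \alpha_k$ contains that of $\alpha_j$, so the same argument applies.

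Set $S_\infty := \bigcup_{j \in \K_\infty}\alpha_j$, $S_f := S_\infty^c$, and let $K_0 \subseteq \K_\infty$ be the minimal elements of $\K_\infty$ under $\succeq$. Upward closure gives $\bigcup_{j \in K_0}(\I_j \cup \J_j) = \K_\infty$ as index sets, so choosing $\delta_j = 1$ for $j \in K_0$ and $\delta_j = 0$ otherwise places $\infty$ exactly on $S_\infty$ via $\sum_{j \in K_0}\bz_j$. Dually $\K_f := \K_\infty^c$ is downward-closed, hence $M(r,s) = 0$ whenever $r \in S_f$ and $s \in S_\infty$; consequently $\bx|_{S_f} = (M|_{S_f})^n(\bx_n|_{S_f})$ for every $n$, and since $\bx|_{S_f}$ is finite by (a), $\bx|_{S_f} \in core(M|_{S_f})$. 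Theorem \ref{generated-core} applied to $M|_{S_f}$ then yields $\bx|_{S_f} = \sum_{j \in J_d}\mu_j\bu_j$, where $J_d$ indexes the distinguished classes of $M|_{S_f}$, the $\bu_j$'s are its distinguished eigenvectors, and $\mu_j \geq 0$. For each $j \in J_d$, any $\alpha_{k'}$ with $k' \neq j$, $\alpha_{k'} \succeq \alpha_j$, and $\rho_{k'} \geq \rho_j$ cannot lie in $\K_f$ (else $j$ would not be distinguished in $M|_{S_f}$), so it lies in $\K_\infty$; together with upward closure and the definition of $\J_j$ this gives $\J_j \subseteq \K_\infty$, i.e.\ $\J_j \cap S_f = \es$. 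Hence $\by_j|_{S_f} = \bw_j|_{\I_j \cap S_f}$ is finite, and using the block-triangular vanishing of $M$ from $S_f$ into $S_\infty$, a direct computation shows that $\by_j|_{S_f}$ is an eigenvector of $M|_{S_f}$ with eigenvalue $\rho_j$ and the same support as $\bu_j$. Theorem \ref{Perron-Frobenius} then makes $\by_j|_{S_f}$ and $\bu_j$ proportional; absorbing the scalar into $\lam_j$ gives $\bx = \sum_{j \in J_d}\lam_j\by_j + \sum_{j \in K_0}\bz_j$, the finite contributions of the $\by_j$'s on $\I_j \cap S_\infty$ being harmlessly absorbed by the $\infty$ values of the $\bz_j$'s.

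The main obstacle is the ``all-or-nothing'' rigidity in (a) and (b): one must rule out $\bx$ being partly infinite and partly finite on a single primitive class, and propagate $\infty$ upward along $\succeq$. Both steps rest on primitivity of every irreducible block $M_j$, guaranteed by assumption (\ref{eq-irre}), which converts ``access in $G(M)$'' into ``length-$n$ access for all $n$ large''. Once the dichotomy is established, the decomposition reduces by restriction to the downward-closed set $S_f$ and an application of Theorem \ref{generated-core} to $M|_{S_f}$, after identifying its distinguished eigenvectors with the canonical vectors $\by_j|_{S_f}$ via Theorem \ref{Perron-Frobenius}.
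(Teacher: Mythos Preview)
Your proof is correct and follows essentially the same route as the paper: split the coordinates into the ``infinite'' classes $\K_\infty$ and ``finite'' classes $\K_f$, use downward-closure of $\K_f$ to restrict to $M|_{S_f}$, apply Theorem~\ref{generated-core} there, and identify the resulting distinguished eigenvectors with the restrictions $\by_j|_{S_f}$. Your treatment is in fact more explicit than the paper's --- you prove the all-or-nothing rigidity (a),(b) that the paper merely asserts, you verify $\bz_j\in core_\infty(M)$ directly (the paper's appeal to Lemma~\ref{infinite-distinguished} only covers the $\by_j$), and you bypass Lemma~\ref{help-lemma} by arguing $\J_j\subseteq\K_\infty$ straight from distinguishedness in $M|_{S_f}$; the only point worth tightening is that to land in $core(M|_{S_f})$ rather than $core_\infty(M|_{S_f})$ you need $\bx_n|_{S_f}$ finite, which follows for $n\ge N^*$ since $M^n(s,s)>0$ would otherwise force $\bx_s=\infty$.
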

\begin{proof}
Let $\bx\in core_{\infty}(M)$. Theorem \ref{generated-core}
implies that when $\bx$ is finite, this vector is in the cone
generated by the vectors $\by_i$, for every $1\leq i\leq l$ such
that $\alpha_i$ is distinguished for $M$. If $\bx$ has all its
coordinates equal to $\infty$ or $zero$, it can be written as
$\sum_{i=1}^l\delta_i\bz_i$, for some $\delta_1,\cdots,
\delta_l\in\{0,1\}$. Thus we can assume  that $\bx$ has a finite
positive coordinate and an infinite coordinate. For $1\leq j\leq
l$, let $\bx_j$ be the $|\alpha_j|$-dimensional vector given by
the restriction of $\bx$ to $\alpha_j$. If some coordinate of
$\bx_j$ is positive and finite, then all the coordinates of
$\bx_j$ are positive and finite. Let $1\leq i\leq l$ be such that
$$i=\max\{1\leq k \leq l: 0<\bx_k<\infty\}.$$  If there exists
$i<k\leq l$ such that $\bx_{k}=\infty$ then for every $1\leq j
\leq l$ such that $\alpha_j$ has access to $\alpha_k$ we have
$\bx_j=\infty$. Then we can write
$$
\bx=\bu +\sum_{j=1}^l\delta_j\bz_j,
$$
where $\bu_j=\bx_j$ for every $1\leq j \leq i$,
$\bu_j=0$ for every $i<j\leq l$, and $\delta_1,\cdots,
\delta_l\in \{0,1\}.$ After rearranging the coordinates of
$\bx$ if it is necessary, we can suppose that there exists $1\leq
s\leq i$ such that $0\leq \bx_j<\infty$ for every $s\leq j\leq
i$, and $\bx_j=\infty$ for every $1\leq l<s$. The vector
$\bx|_{\alpha_s,\cdots,\alpha_i}=(\bx_s,\cdots,
\bx_i)$ is in the core of the restriction $M'$ of $M$ to the
classes $\alpha_s,\cdots, \alpha_i$. Then by Theorem
\ref{generated-core}, $\bx|_{\alpha_s,\cdots,\alpha_i}$ is in the cone generated by
the distinguished eigenvectors of $M'$. Observe that $\bv$ is
a distinguished eigenvector of $M'$ if and only if $\bv$ is
the restriction to $\alpha_s,\cdots,\alpha_i$ of a scalar multiple of the
vector $\by_j$, for some $s\leq j\leq i$ such that the class
$\alpha_j$ is distinguished in $M'$.
 Thus, using Lemma \ref{help-lemma}, we can write
$$
\bx=\sum_{j=1}^l\lambda_j\by_j+\delta_j\bz_j,
$$
where $\lambda_1,\cdots,\lambda_l\geq 0$ (with $\lambda_i>0$), and
$\delta_1,\cdots,\delta_l\in\{0,1\}$. The converse holds by
Lemma \ref{infinite-distinguished}.
\end{proof}
\subsection{Clopen nested partitions of the transversal.}
As in the previous sections, we consider a substitution $\omega$
defined on a set of prototiles $\A\subseteq \RR^d$. We denote by
$M\in\M_{\A\times\A}(\ZZ_+)$ the substitution matrix of $\omega$,
and $\A_1,\cdots, \A_l$ the equivalence classes associated to $M$.
We denote by $M_i$ the restriction of $M$ to the indices in
$\A_i$, and we assume that $M_i$ is primitive or equal to $[0]$.
We suppose there are equivalence classes which are not associated
to minimal components, namely, $\A_{m+1},\cdots,\A_l$, for some
$1\leq m< l$. We denote $\A'=\A\setminus
(\A_1\cup\cdots\cup\A_m)$.

\medskip

Recall that we denote by $\Gamma$ the transversal of $X_{\A,\omega}$,
and for every $A\in \A$, we set
$$ C_A=\{\T\in \Gamma: A\in\A\}.$$
The collection $\P_0=\{C_A: A\in \A \}$ is a clopen partition of
$\Gamma$. For  $n\geq 1$ and $A,B\in \A$,  we define
$$
D_{n,A}=\supp(\omega^n(A)),$$ $$J_{A,B}^{(n)}=\{\bv\in
D_{n,A}: B+\bv\subseteq \omega^n(A) \},$$
$$J_{A}^{(n)}=\bigcup_{B\in \A}J_{A,B}^{(n)},
$$
and
$$
\P_n=\{\omega^n(C_A)-\bv:\, \bv\in J_A^{(n)}, A\in \A\}.
$$
(These $\P_n$ have nothing to do with $\P_n(W)$ from Section 4.)
{\bf For the rest of this section we assume that the admissible substitution $\om$ is partially recognizable, see Definition~\ref{part-recog}, and also
\begin{equation} \label{eq-added}
\mbox{For every prototile $A\in \Anonp$, the patch $\om(A)$ contains a tile of type $\Anonp$.}
\end{equation}
}
Note that the latter condition is satisfied if $M$ has no components $[0]$, or if the non-periodic border condition holds.
\begin{lemma}
\label{partition} For every $n\geq 0$, the collection $\P_n$ is a
covering of $\Gamma$. Furthermore,
\begin{enumerate}
\item[(i)] For each $A\in \A$ and $n\geq 1$,
\begin{equation}
\label{partition-1} C_A=\bigcup_{B\in \A}\bigcup_{\bv\in
J_{B,A}^{(n)}}(\omega^n(C_B)-\bv).
\end{equation}
\item[(ii)] If $A,B\in \A$, $n\geq 1$, $\bv\in J_A^{(n)}$ and
$\bu\in J_B^{(n)}$ are such that $$(\omega^n(C_A)-\bv)\cap
(\omega^n(C_B)-\bu)\neq \emptyset,$$ then $A=B$ and
$\bv=\bu$, or $A, B\in \A_{\per}$.
\item[(iii)] Let $\ell>n$, $A\in \A$ , $B\in \Anonp$,
$\bv\in J_A^{(\ell)}$ and $\bu\in J_B^{(n)}$.  We have
$$(\omega^\ell(C_A)-\bv)\cap (\omega^n(C_B)-\bu)\neq \emptyset,$$ if and
only if $\omega^\ell(C_A)-\bv\subseteq \omega^n(C_B)-\bu$
and $B+\varphi^{-n}(\bv-\bu)\in \omega^{\ell-n}(A)$.
\end{enumerate}
\end{lemma}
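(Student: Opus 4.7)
My plan is to establish the covering property together with (i) via surjectivity of the substitution, then (ii) by iterated partial recognizability combined with the uniqueness of the tile containing the origin in its interior, and finally (iii) by combining the recognizability identity with the disjoint-interior structure of super-tiles.

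For the covering of $\Gamma$ and part (i): given $\T\in\Gamma$ with prototile $T$ at the origin, Lemma~\ref{onto} yields $\T''\in\Xa$ with $\om^n(\T'')=\T$. Let $A+\bx\in\T''$ be the unique tile whose substitution $\om^n(A+\bx)$ contains $T$. Since $\om^n(A+\bx)=\om^n(A)+\varphi^n(\bx)$, the tile $T-\varphi^n(\bx)=T+\bv$ (where $\bv:=-\varphi^n(\bx)$) belongs to $\om^n(A)$, so $\bv\in J_{A,T}^{(n)}$. Then $\T''-\bx\in C_A$ and $\om^n(\T''-\bx)=\T+\bv$, giving $\T\in\om^n(C_A)-\bv$. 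Specializing to $T=A$ yields the non-trivial $\supseteq$ in (i); the reverse inclusion is immediate because $\bv\in J_{B,A}^{(n)}$ means $A+\bv\in\om^n(B)\subseteq\om^n(\T')$ for any $\T'\in C_B$, hence $A\in\om^n(\T')-\bv$.

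For part (ii) I assume without loss of generality that $A\in\Anonp$ (otherwise both $A,B\in\Aper$, which is the permitted alternative) and choose lifts $\T_1\in C_A$, $\T_2\in C_B$ with $\om^n(\T_1)=\T+\bv$, $\om^n(\T_2)=\T+\bu$. The preliminary observation is that partial recognizability can be iterated $n$ times: if $\om(\S')=\S$ with $\S$ containing a non-periodic tile $\tau$, then $\tau\in\om(s)$ for the unique $s\in\S'$, and the type of $s$ cannot lie in any periodic minimal set $\A_j\subseteq\Aper$ (otherwise the maximality of $\A_j$ in the accessibility order forces $\om(\A_j)\subseteq\A_j^+$, trapping $\tau$ inside $\Aper$). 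Combined with iterated (\ref{eq-added}), which forces $\T$ to inherit a non-periodic tile from $A$, the $n$-fold application of partial recognizability yields $\T_1-\varphi^{-n}(\bv)=\T_2-\varphi^{-n}(\bu)$. Because every prototile contains $B_\eta(\b0)$ in its interior, centers of tiles in $\om^n(A)$ lie in $\varphi^n(\Int(\supp(A)))$, so $\varphi^{-n}(\bv)\in\Int(\supp(A))$; therefore the unique tile of $\T_1-\varphi^{-n}(\bv)$ with $\b0$ in its interior is $A-\varphi^{-n}(\bv)$, and similarly the unique such tile of $\T_2-\varphi^{-n}(\bu)$ is $B-\varphi^{-n}(\bu)$. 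Matching labels and centers forces $A=B$ and $\bv=\bu$.

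For part (iii) the backward direction is direct: if $B+\varphi^{-n}(\bv-\bu)\in\om^{\ell-n}(A)$ and $\T'\in C_A$, then this tile sits inside $\om^{\ell-n}(\T')$, so $\om^{\ell-n}(\T')-\varphi^{-n}(\bv-\bu)\in C_B$, and applying $\om^n$ converts $\om^\ell(\T')-\bv$ into an element of $\om^n(C_B)-\bu$, giving both the containment and the non-emptiness of the intersection. For the forward direction, pick $\T$ in the intersection with lifts $\T_1\in C_A$, $\T_2\in C_B$, and tile $T$ at the origin. Because $B\in\Anonp$, iterating partial recognizability on the identity $\om^n(\om^{\ell-n}(\T_1)-\varphi^{-n}(\bv-\bu))=\T+\bu=\om^n(\T_2)$ gives $\om^{\ell-n}(\T_1)=\T_2+\varphi^{-n}(\bv-\bu)$, hence $B+\varphi^{-n}(\bv-\bu)\in\om^{\ell-n}(\T_1)$. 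The main obstacle will be to refine this to membership in the sub-patch $\om^{\ell-n}(A)$, and I would do so by identifying the super-tile of $\om^\ell(\T_1)=\om^n(\om^{\ell-n}(\T_1))$ containing $T+\bv$ from two sides. Viewed through $\om^{\ell-n}(\T_1)$, it is $\om^n(s)$ with $s$ the unique tile of $\om^{\ell-n}(\T_1)$ whose interior contains $\varphi^{-n}(\bv)$; using $\varphi^{-n}(\bu)\in\Int(\supp(B))$ one checks $\varphi^{-n}(\bv)\in\Int(\supp(B))+\varphi^{-n}(\bv-\bu)=\Int(\supp(B+\varphi^{-n}(\bv-\bu)))$, forcing $s=B+\varphi^{-n}(\bv-\bu)$. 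Viewed through $A$, since $T+\bv\in\om^\ell(A)=\bigsqcup_{s'\in\om^{\ell-n}(A)}\om^n(s')$, the super-tile must equal $\om^n(s')$ for some $s'\in\om^{\ell-n}(A)$. Disjoint interiors of super-tiles force $s=s'$, so $B+\varphi^{-n}(\bv-\bu)\in\om^{\ell-n}(A)$ as required, completing the proof.
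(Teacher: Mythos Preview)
Your argument is correct. Parts (i) and (ii) follow essentially the same line as the paper, with you supplying more detail: in particular, you spell out why partial recognizability iterates (preimages of non-periodic tiles are non-periodic) and why the equality of $n$-th preimages forces $A=B$, $\bv=\bu$ via the unique tile whose interior contains the origin. The phrasing of your WLOG in (ii) is slightly off --- ``otherwise both $A,B\in\Aper$'' should read ``otherwise, if neither lies in $\Anonp$, both are in $\Aper$'' --- but the intended symmetry argument is clear.

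For (iii) you take a genuinely different route. The paper decomposes $\om^\ell(A)$ as a union of level-$n$ supertiles $\om^n(D_i)+\varphi^n(\bx_i)$ coming from the tiles $D_i+\bx_i$ of $\om^{\ell-n}(A)$, observes that $\om^\ell(C_A)-\bv$ sits inside one of the sets $\om^n(C_{D_i})-\bz$, and then simply applies part (ii) at level $n$ to the pair $\om^n(C_{D_i})-\bz$, $\om^n(C_B)-\bu$. This reduction to (ii) is shorter and avoids re-invoking partial recognizability. Your direct approach --- applying recognizability $n$ times to obtain $\om^{\ell-n}(\T_1)=\T_2+\varphi^{-n}(\bv-\bu)$ and then matching supertiles via the point $\varphi^{-n}(\bv)$ --- is more hands-on but equally valid, and has the virtue of making explicit which tile of $\om^{\ell-n}(\T_1)$ carries the origin. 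Either way the key input is the same: uniqueness of the supertile containing a given interior point.
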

\begin{proof}
(i) Proposition \ref{onto} ensures that $\P_n$ is a covering of
$\Gamma$ and implies (\ref{partition-1}). \\
\indent
(ii)
By partial recognizability,
if $A\neq B$ or $\bv\neq \bu$ then the
tilings in $(\omega^n(C_A)-\bv)\cap (\omega^n(C_B)-\bu)$
contain only tiles from $\Aper$. Hence the patches $\omega^n(A)$
and $\omega^n(B)$ only contain tiles in $\A_{\per}$. Condition
(\ref{eq-added}) implies that $A$ and $B$ are in
$\A_{\per}$.\\
\indent
(iii)
Let $D_1,\cdots, D_k\in \A$ and $\bx_1,\cdots, \bx_k\in \RR^d$
be such that $\omega^{\ell-n}(A)$ is the disjoint union
$\bigcup_{i=1}^k(D_i+\bx_i)$. Then $\omega^\ell(A)$ is equal
to the disjoint union
$\bigcup_{i=1}^k(\omega^n(D_i)+\varphi^n(\bx_i))$. This
implies that there exists $1\leq i\leq k$ such that
$\bz=\bv-\varphi^{n}(\bx_i)\in J^{(n)}_{D_i}$ and
$$
\omega^\ell(C_A)-\bv=
\omega^\ell(C_A)-\varphi^n(\bx_i)-\bz\subseteq
\omega^n(C_{D_i})-\bz.
$$
Then by hypothesis we have $(\omega^n(C_B)-\bu)\cap
(\omega^n(C_{D_i})-\bz)\neq \emptyset$. Since $B\in
\Anonp$, from part (ii) it follows that $B=D_i$,
$\bz=\bu$, and then $\omega^\ell(C_A)-\bv\subseteq
\omega^n(C_B)-\bu$ and $B+\varphi^{-n}(\bv-\bu)\in
\omega^{\ell-n}(A)$. The other direction of the equivalence is
immediate.
\end{proof}
\begin{remark} \label{rem-inf}
{\rm 1. If the substitution is non-periodic, then $(\P_n)_{n\geq 0}$
is a nested sequence of clopen partitions of $X_{\A,\omega}$. In
the minimal non-periodic one-dimensional symbolic case, the
sequences $(\P_n)_{n\geq 0}$ correspond to the sequence of
Kakutani-Rohlin partitions for minimal substitution subshifts
given in \cite{DHS}. In general, it is only a covering, but we will see below that it becomes a partition if we intersect it with the set of non-periodic tilings.\\
\indent
2. It is useful to give an informal interpretation of the covering $(\P_n)_{n\ge 0}$. Given a tiling $\T\in \Gamma$, we have a sequence of tilings $(\T_n)_{n\ge 0}$, with
$\T_0= \T$, such that $\om(\T_n) = \T_{n-1}$ for $n\ge 1$. This defines a sequence of ``supertilings'' obtained by composing the tiles of $\T$, with
``supertiles'' that are translates of $\om^n(A)$ for $A\in \A$. This sequence is uniquely defined if $\T$ contains a non-periodic tile. We consider
the supertile of order $n$ whose support contains the origin (it is uniquely defined since $\T$ is in the transversal). This determines the element of
$\P_n$ to which $\T$ belongs.
}
\end{remark}

\subsection{Necessary conditions for transverse measures.}
\begin{definition}Let $\mu^T$ be a transverse measure on $\B(\Gamma)$. For every $A\in \A$ and for
every $n\geq 0$, we define
$$
\mu^T_{n,A}=\mu^T(\omega^n(C_A)-\bv),
$$
where $\bv$ is a vector in $J_A^{(n)}$. The number $\mu^T_{n,A}$
does not depend on $\bv$ because $\mu$ is transverse. We denote by
$\mu^T_{n}$ the vector $(\mu^T_{n,A})_{A\in \A}$ and by $\wt{\mu}^T_n$ the vector $(\mu^T_{n,A})_{A\in \A'}$.
\end{definition}
\begin{lemma}
\label{coreN} Let $\mu$ be a transverse measure on $\B(\Gamma)$.
Then for every $A\in \A'$ and $\ell>n\geq 0$,
$$
\mu^T_{n,A}=\sum_{B\in \A'}M^{\ell-n}(A,B)\mu^T_{\ell,B}.
$$
Thus,
$$
\wt{\mu}^T_n = (M')^{\ell-n} \wt{\mu}^T_\ell,
$$
where $M'$ is the restriction of $M$ to the set of indices in $\A'$.
\end{lemma}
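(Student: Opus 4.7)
The plan is to realize the set $\om^n(C_A) - \bv$, for $A \in \A'$ and $\bv \in J_A^{(n)}$, as a \emph{disjoint} union of level-$\ell$ cylinders from $\P_\ell$ and then read off the identity using translation invariance of the transverse measure. Since $\A' \seq \Anonp$, Lemma \ref{partition}(iii), applied with the roles of the two levels swapped (so $A \in \Anonp$ plays the lower-level role), characterizes exactly which sets $\om^\ell(C_B) - \bu$ sit inside $\om^n(C_A) - \bv$: precisely those for which a tile of type $A$ at position $\by := \varphi^{-n}(\bu - \bv)$ appears in $\om^{\ell-n}(B)$, and in that case the inclusion (not just intersection) holds automatically.

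First I would enumerate the pieces. For each $B \in \A$ and each $\by$ with $A + \by \in \om^{\ell-n}(B)$, the vector $\bu := \bv + \varphi^n(\by)$ lies in $J_B^{(\ell)}$ and gives $\om^\ell(C_B) - \bu \seq \om^n(C_A) - \bv$; the number of admissible $\by$ for fixed $B$ is by definition $M^{\ell-n}(A,B)$. If $B$ lies outside $\A'$, then $B$ belongs to some minimal component $\A_i$, so $\om^{\ell-n}(B)$ contains only tiles of $\A_i$ and cannot contain the tile $A \in \A'$; thus $M^{\ell-n}(A,B) = 0$ and only $B \in \A'$ contribute to the sum.

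Next I would verify that these pieces actually \emph{partition} $\om^n(C_A) - \bv$. Exhaustiveness is a translated form of the covering formula (\ref{partition-1}) at level $\ell-n$: any $\Tk \in \om^n(C_A) - \bv$ has the form $\om^n(\Tk') - \bv$ with $\Tk' \in C_A$ (using Lemma \ref{onto}), and decomposing $\Tk'$ via (\ref{partition-1}) places $\Tk$ inside one of the listed cylinders. Disjointness is where partial recognizability enters via Lemma \ref{partition}(ii): two distinct cylinders in the list could only overlap in a tiling in which both distinguished tiles lie in $\Aper$, but every tiling in $\om^n(C_A) - \bv$ contains the $\Anonp$ tile $A$ at the origin, so the exceptional periodic case is vacuous.

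Finally, applying $\mu^T$ to the disjoint union and using translation invariance, each cylinder $\om^\ell(C_B) - \bu$ contributes $\mu^T_{\ell,B}$, and there are exactly $M^{\ell-n}(A,B)$ of them for each $B \in \A'$; this yields the stated identity, and the matrix form $\wt{\mu}^T_n = (M')^{\ell-n} \wt{\mu}^T_\ell$ is then immediate. I expect the disjointness step to be the main subtlety: it is where partial recognizability of $\om$ (standing hypothesis of the section, reinforced by (\ref{eq-added})) is used through Lemma \ref{partition}(ii) to rule out the periodic exception and obtain an honest partition rather than a mere covering.
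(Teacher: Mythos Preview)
Your proof is correct and follows essentially the same route as the paper: decompose $\om^n(C_A)-\bv$ into level-$\ell$ cylinders via Lemma~\ref{partition}(iii), restrict the union to $B\in\A'$ using $\om$-invariance of minimal components, count with $M^{\ell-n}(A,B)$, and invoke Lemma~\ref{partition}(ii) for disjointness. One small sharpening of your disjointness step: since you have already restricted to $B\in\A'\subseteq\Anonp$, the $\Aper$ alternative in Lemma~\ref{partition}(ii) is excluded outright for the types $B$, which is the direct reason the union is disjoint; your appeal to the tile $A\in\Anonp$ at the origin points at the same fact but is one step removed from the statement of (ii).
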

\begin{proof}
Let $A\in \A'$ and $\bu\in J_A^{(n)}$. From (iii) of Lemma
\ref{partition} we have
$$
\omega^n(C_A)-\bu=\bigcup_{B\in\A}\bigcup_{\bv\in
I^{(n)}_B}(\omega^\ell(C_B)-\bv),
$$
where $I_B^{(n)}$ is the set of $\bv$ in $J_B^{(n)}$ such that
$A+\varphi^{-n}(\bv-\bu)\in \omega^{\ell-n}(B)$. Since the
minimal components are $\omega$-invariant  we can restrict the
outer union to $\Ak'$:
$$
\omega^n(C_A)-\bu=\bigcup_{B\in\A'}\bigcup_{\bv\in
I^{(n)}_B}(\omega^\ell(C_B)-\bv).
$$
Observe that $|I_B^{(n)}|=M^{\ell-n}(A,B)$. Thus from (ii) of
Lemma \ref{partition} we obtain the desired equality.
\end{proof}
\begin{remark} {\rm
If $\A_{per}=\emptyset$ then the same proof shows that
$$\mu^T_{n,A}=\sum_{B\in\A}M^{\ell-n}(A,B)\mu^T_{\ell,B},
$$
for every $A\in \A$ and $\ell> n\ge 0$. It follows that
$\mu^T_0=M^n\mu^T_n$ for every $n>0$,  hence this vector belongs to $core_\infty(M)$. Thus  Lemmas \ref{help-lemma},
\ref{relation-between-eigenvalues} and \ref{coreN} imply that if
$\mu^T_{0,A}>0$ for some $A\in \A'$, then the restriction of $\mu^T_0$
to every minimal component which has access to $A$ is infinity
(because the component of $A$ is not distinguished). In the next
lemma we show the same for the general case.}
\end{remark}

\begin{lemma}
\label{finite} Let $\mu^T$ be a transverse measure on $\B(\Gamma)$
and let $1\leq p\leq m$. If there is $A\in \A_p$  for which there
exist $D\in \A'$ and $n>0$
verifying $\mu^T(C_D)>0$ and $M^n(A,D)>0$, then $\mu^T(C_E)=\infty$,
for every $E\in \A_p$.
\end{lemma}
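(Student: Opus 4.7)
My plan is to exhibit, for infinitely many $L$, a prototile $B^{*}\in\A'$ such that the product $M^{L}(E,B^{*})\,\mu^{T}_{L,B^{*}}$ grows like $(\lambda_{0}/\rho')^{L}$ for some $\rho'<\lambda_{0}$. The main inequality used is
\[
\mu^{T}(C_{E}) \;\geq\; M^{L}(E,B)\,\mu^{T}_{L,B},\qquad B\in\A',\ L\geq 0,
\]
which holds because, by Lemma~\ref{partition}(ii) and the fact that $\A'\cap\Aper=\es$, the $M^{L}(E,B)$ translates $\omega^{L}(C_{B})-\bv$ with $\bv\in J_{B,E}^{(L)}$ are pairwise disjoint subsets of $C_{E}$, each of transverse measure $\mu^{T}_{L,B}$. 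Using $M^{n}(A,D)>0$ together with primitivity of $M_{p}$, I first enlarge $n$ so that $M^{n}(E,D)>0$ for our specified $E\in\A_{p}$ as well.

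The choice of $B^{*}$ is by pigeonhole. For every $L$, Lemma~\ref{coreN} applied to $D\in\A'$ gives
\[
\mu^{T}(C_{D}) \;=\; \sum_{B\in\A'}M^{L}(D,B)\,\mu^{T}_{L,B},
\]
so some $B_{L}\in\A'$ satisfies $M^{L}(D,B_{L})\,\mu^{T}_{L,B_{L}}\geq \mu^{T}(C_{D})/|\A'|$. Since $\A'$ is finite, there exist $B^{*}\in\A'$ and an infinite set $S\subseteq\NN$ with $B_{L}=B^{*}$ for all $L\in S$. Because any path in $G(M)$ starting in $\A'$ stays in $\A'$ by the block upper-triangular structure, $M^{L}(D,B^{*})=(M|_{\A'})^{L}(D,B^{*})$. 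Since $\rho:=\rho(M|_{\A'})<\lambda_{0}$ by Proposition~\ref{core}(ii), fixing $\rho'\in(\rho,\lambda_{0})$ yields $M^{L}(D,B^{*})\leq C_{1}(\rho')^{L}$ by Gelfand's formula, hence $\mu^{T}_{L,B^{*}}\geq C_{2}(\rho')^{-L}$ for all $L\in S$.

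It then remains to bound $M^{L}(E,B^{*})$ from below by $\lambda_{0}^{L}$. Fix any $L_{0}\in S$; then $M^{L_{0}}(D,B^{*})\geq 1$, and concatenating with $M^{n}(E,D)\geq 1$ gives $M^{n+L_{0}}(E,B^{*})\geq 1$. For $L\in S$ sufficiently large, prepending a loop of length $L-n-L_{0}$ from $E$ to itself inside $\A_{p}$ produces
\[
M^{L}(E,B^{*}) \;\geq\; M_{p}^{L-n-L_{0}}(E,E)\cdot M^{n+L_{0}}(E,B^{*}) \;\geq\; C_{3}\lambda_{0}^{L},
\]
using the Perron--Frobenius asymptotic $M_{p}^{s}(E,E)\sim c\lambda_{0}^{s}$ for the primitive block $M_{p}$. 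Combining the three bounds,
\[
\mu^{T}(C_{E}) \;\geq\; M^{L}(E,B^{*})\,\mu^{T}_{L,B^{*}} \;\geq\; C_{2}C_{3}(\lambda_{0}/\rho')^{L} \;\longrightarrow\; \infty
\]
as $L\to\infty$ along $S$, which yields $\mu^{T}(C_{E})=\infty$. The step I expect to require the most care is the pigeonhole argument, where a single $B^{*}$ must realize both ``frequent appearance'' (the pigeonhole conclusion) and ``reachability from $E$''; the block upper-triangular structure of $M$ takes care of the latter automatically, since $B^{*}$ produces $D$ via $\omega^{L_{0}}$ and $D$ in turn produces every prototile of $\A_{p}$ via $\omega^{n}$ after the initial enlargement.
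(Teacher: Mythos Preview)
Your proof is correct and takes a genuinely different route from the paper's. The paper introduces the open invariant set $\Xnonp$ of tilings containing a tile of type $\Anonp$, observes (via partial recognizability) that the coverings $\P_n$ become nested partitions once intersected with $\Xnonp$, and then argues that the full vector $(\mu^T(\omega^n(C_A)\cap\Xnonp))_{A\in\A}$ lies in $core_\infty(M)$; the conclusion then drops out of the structural Lemma~\ref{relation-between-eigenvalues} on the infinite core, since $\A_p$ has access to the class containing $D$ and $\rho_p=\lambda_0>\rho_i$ for every non-minimal class $\A_i$. By contrast, you stay entirely within $\A'$ and the already-available Lemma~\ref{coreN}: you extract a single prototile $B^*\in\A'$ by pigeonhole, bound $\mu^T_{L,B^*}$ from below via Gelfand's formula applied to $M|_{\A'}$ (using $\rho(M|_{\A'})<\lambda_0$ from Proposition~\ref{core}), and bound $M^L(E,B^*)$ from below via Perron--Frobenius asymptotics for the primitive block $M_p$; the disjointness needed for the basic inequality $\mu^T(C_E)\ge M^L(E,B^*)\,\mu^T_{L,B^*}$ comes directly from Lemma~\ref{partition}(ii) together with $\A'\cap\Aper=\emptyset$. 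Your approach is more elementary and self-contained---it avoids introducing $\Xnonp$ and bypasses the infinite-core machinery---while the paper's approach is more conceptual and integrates with the framework used later for the classification of $\sigma$-finite measures.
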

\begin{proof}
Let
$$
\Xnonp:= \{\T\in \Xa:\,\T\ \mbox{contains a tile of type}\ \Anonp\}.
$$
This set is open and invariant under the $\R^d$ translation action.
Using partial recognizability, we obtain, exactly as in the proof of Lemma~\ref{partition}, the following statements:

\begin{enumerate}
\item[(ii$'$)] If $A,B\in \A$, $n\geq 1$, $\bv\in J_A^{(n)}$ and
$\bu\in J_B^{(n)}$ are such that
$$(\omega^n(C_A)-\bv)\cap
(\omega^n(C_B)-\bu)\cap \Xnonp\neq \emptyset,$$
then $A=B$ and
$\bv=\bu$.
\item[(iii$'$)] Let $\ell>n$, $A,B\in \A$,
$\bv\in J_A^{(\ell)}$ and $\bu\in J_B^{(n)}$.  We have
$$(\omega^\ell(C_A)-\bv)\cap (\omega^n(C_B)-\bu)\cap \Xnonp \neq \emptyset,$$ if and
only if $\omega^\ell(C_A)-\bv\subseteq \omega^n(C_B)-\bu$
and $B+\varphi^{-n}(\bv-\bu)\in \omega^{\ell-n}(A)$.
\end{enumerate}
In other words, by intersecting $\P_n$ with $\Xnonp$ we recover the nested partition properties even in the presence of periodic minimal components.

Next we can argue as in Lemma~\ref{coreN} to deduce that the vector
$$
(\mu^T(\om^n(C_A) \cap \Xnonp))_{A\in \A}
$$
belongs to the infinite core of $M$ for every $n\ge 0$. Let $D\in \A_i\seq \A'$ be such that $\mu(C_D) >0$. Then
$D\in \Anonp$, so
$$
\mu^T(C_D\cap \Xnonp) = \mu^T(C_D) >0.
$$
The assumption of the lemma implies that the class $\A_p$ has access to $\A_i$, and since $\A_p$ corresponds to a
minimal component, we have $\rho_p> \rho_i$. Now it follows by Lemma~\ref{relation-between-eigenvalues}
that
$$
\mu^T(C_E)\ge \mu^T(C_E\cap \Xnonp) = \infty\ \ \ \mbox{for all}\ E\in \A_p.
$$
\end{proof}

\subsection{Constructing infinite transverse measures.}
In Section \ref{finmeas} we proved that finite invariant
measures of the substitution tiling system $(X_{\A,\omega},\RR^d)=(X,\RR^d)$
are supported on its minimal components. Therefore, if $\mu$ is a
finite invariant measure and $\mu^T$ is the associated transverse
measure, then $\mu^T(C_D)=0$ for every prototile $D\in\A'$. In
this section we characterize the infinite $\sig$-finite invariant
measures $\mu$ for which there exists  $D\in\A'$ such that
$0<\mu^T(C_D)<\infty$. It follows from Lemmas \ref{coreN} and \ref{finite} that the values of $\mu^T$ on the elements of $\P_n$ belong to the
infinite core of $M$ (at least, if we exclude periodic components). Lemma~\ref{relation-between-eigenvalues} suggests that those which
correspond to ergodic measures
should come from the vectors $\by_i$. We will show that this is indeed the case, under some mild assumptions.

\medskip

Recall that $\A_1,\cdots,\A_m$ are the equivalence classes of
$M$ associated to the minimal components, and
$\A'=\A\setminus\cup_{i=1}^m\A_i=\A_{m+1}\cup\cdots\cup\A_l$. Let
$m+1\leq i\leq l$ be such that $M_i$ is primitive. Let $\A_{\J_i^c}$
be the set of prototiles $A\in\A$ such that if $A\in \A_j$ then
$j\in \J_i^c$ (see Definition \ref{definition1} for $\J_i$).
Equivalently, $\AJC$ is the set of prototiles with indices in $\I_i$ and those which have no access to the class $\A_i$.
By definition, $\A_i$ is a distinguished class for the restriction of $M$ to the indices in $\AJC$. A class $\A_j$, with $j\le m$,
corresponding to a minimal component, is in $\AJC$ if and only if it has no access to $\A_i$.
We define
$$\Gamma_i=\bigcup_{n\geq
0}\bigcup_{A\in\AJC}\bigcup_{\ \ \bv\in
J_A^{(n)}}(\omega^n(C_A)-\bv),$$ and $\F_i$,  the collection of
subsets of $\Gamma_i$ given by
$$
\F_i=\{\omega^n(C_A)-\bv: A\in\AJC,\ \bv\in J_A^{(n)},
n\geq 0\}.
$$
Recalling the informal description of partition elements from Remark~\ref{rem-inf}, we note that the tilings in $\Gam_i$
are those for which the ``supertiles'' of order $n$ containing the origin are of ``type'' $\AJC$ for $n$ sufficiently large.
 Observe that if this is the case for some $n_0$, then this is also true for $n > n_0$, since if a tile of type $\A_j$ occurs in $\om^n(B)$ for $B\in
\A_k$, then $j$ has access to $k$, and so either $j=k$, or $k$ does not have access to $j$.
Let $\by_i\in\overline{\RR}^{\A}_+$ be the vector given in
Definition \ref{definition2} for the class $\A_i$. For  $n\geq 0$,
let $\by_{n,i}\in\overline{\RR}^{\A}_+$ be such that
$\by_i=M^n\by_{n,i}$
(that is, $\by_{n,i}=\by_{i}/\rho^n_i$, where $\rho_i$ is the
Perron eigenvalue of $M_i$). We define the function $\phi_i:
\F_i\cup\{\emptyset\}\to \RR_+$ by $\phi_i(\emptyset)=0$ and
\be\label{def-phi}
\phi_i(\omega^n(C_A)-\bv)= \by_{n,i}(A), \mbox{ for every
} A\in \AJC \mbox{ and } n\geq 0.
\ee
Since $\by_i|_{\J_i^c}<\infty$, this function is well-defined. A
standard argument shows that the function $\phi_i^*: 2^{\Gam_i}\to
\overline{\RR}_+$ given by
$$
\phi_i^*(U)=\inf\left\{\sum_{n\in\NN}\phi_i(C_n): (C_n)_{n\in\NN}\subseteq
\F_i\cup\{\emptyset\}, \ U\subseteq \bigcup_{n\in\NN}C_n\right\}
$$
is an outer measure. Observe that $\phi_i^*$ is well-defined
because $\F_i$ is countable and the union of all the sets in $\F_i$ is
equal to $\Gamma_i$.
The collection
$$
\eta_i^*=\{U\subseteq \Gamma_i:\ \forall E\subseteq \Gamma_i,\
\phi_i^*(E)\geq \phi_i^*(E\cap U)+\phi_i^*(E\setminus U)\}
$$
is a $\sig$-algebra and the restriction of $\phi_i^*$ to $\eta_i^*$
is a complete measure (every negligible set with respect to
$\phi_i^*$ is in $\eta^*$).
\begin{lemma}\label{lem-meas1}
$\F_i \subseteq \eta_i^*$.
\end{lemma}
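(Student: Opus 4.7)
The plan is to verify the Carath\'eodory condition $\phi_i^*(E) \geq \phi_i^*(E \cap U) + \phi_i^*(E \setminus U)$ for every $U \in \F_i$ and every $E \subseteq \Gamma_i$. I would argue that $\F_i$ behaves like a semi-ring on which $\phi_i$ is countably additive, modulo a $\phi_i^*$-null ``periodic'' part, and then invoke the standard Carath\'eodory extension mechanism.

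First I would dispose of the case $\phi_i(U) = 0$, which happens exactly when $U = \omega^n(C_A) - \bv$ with $A \in \A_{(\I_i \cup \J_i)^c}$: here $\phi_i^*(U) \leq \phi_i(U) = 0$, so $U$ is $\phi_i^*$-null and automatically in $\eta_i^*$. For the substantive case $A \in \A_{\I_i}$, I would first show that $A \in \Anonp$: a minimal class $\A_j$ has $\rho_j = |\det\varphi| > \rho_i$, so if $\A_j$ has access to $\A_i$ it must lie in $\J_i$, never in $\I_i$. Consequently every tiling of $U$ lies in $\Xnonp$, partial recognizability applies, and combining Lemma~\ref{partition}(ii)--(iii) with the fact that all root tiles of positive-weight $\F_i$-elements lie in $\Anonp$ gives the nesting property: for any $U, V \in \F_i$ with $\phi_i(U), \phi_i(V) > 0$, either $U \subseteq V$, $V \subseteq U$, or $U \cap V \subseteq \Gamma_i \setminus \Xnonp$.

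The crucial technical step is refinement-additivity of $\phi_i$. Decomposing $U = \omega^n(C_A) - \bv$ at a level $m > n$ yields pieces $\omega^m(C_B) - \bw$, where $B$ ranges over prototile types in $\omega^{m-n}(A)$ with multiplicity $M^{m-n}(A, B)$. I would show that all such $B$ lie in $\A_{\J_i^c}$, so the refinement stays inside $\F_i$: by the graph-theoretic argument used in the proof of Lemma~\ref{infinite-distinguished}, if $A \in \I_i$ had access to some $B \in \J_i$, the chain $A \to^* B \to^* k^* \to^* i$ with $\rho_{k^*} \geq \rho_i$ would force $A \in \J_i$, a contradiction. The Perron relation $M|_{\I_i}^{m-n} \bw_i = \rho_i^{m-n}\bw_i$ then yields
$$\sum_{B, \bw} \phi_i(\omega^m(C_B) - \bw) = \sum_{B \in \I_i} M^{m-n}(A, B)\,\frac{\bw_i(B)}{\rho_i^m} = \frac{\bw_i(A)}{\rho_i^n} = \phi_i(U).$$

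With nesting and refinement-additivity in hand, I would complete the proof as follows. Given any $\F_i$-cover $(C_k)$ of $E$, refine each $C_k$ up to a common level at least $n$; by nesting, on $\Xnonp$ each refined piece is either contained in $U$ or disjoint from $U$, giving covers of $E \cap U$ and $E \setminus U$ whose total $\phi_i$-sum equals $\sum_k \phi_i(C_k)$ by additivity. The remainder $\Gamma_i \setminus \Xnonp$ is $\phi_i^*$-null, since it is covered by cylinders $C_A$ over periodic minimal prototiles (all in $\A_{(\I_i \cup \J_i)^c}$), each with $\phi_i = 0$. Taking the infimum over covers gives the Carath\'eodory inequality. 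The main obstacle is precisely the refinement-additivity step: one must simultaneously check that refinement stays inside $\F_i$ (via the $\I_i$/$\J_i$ access analysis) and that the numerical sum collapses exactly via the Perron relation on $\bw_i$. Everything else is routine bookkeeping or an application of the standard semi-ring-to-$\sigma$-algebra extension.
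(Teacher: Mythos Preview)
Your proposal is correct and follows essentially the same route as the paper's proof. Both arguments hinge on the same two ingredients: the nesting of positive-weight $\F_i$-elements coming from Lemma~\ref{partition}, and the refinement identity $\sum_{D\in\AJC} M^{k}(A,D)\,\by_{n+k,i}(D)=\by_{n,i}(A)$ derived from the Perron relation $M|_{\I_i}\bw_i=\rho_i\bw_i$. The paper packages these as a two-step argument (first verify the Carath\'eodory inequality for $E\in\F_i$, then bootstrap to arbitrary $E$ via an $\varepsilon$-cover and subadditivity), whereas you fold the two steps together by refining an arbitrary cover and splitting the pieces directly; this is a cosmetic reorganisation, not a different idea. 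Your explicit check that $\I_i$ contains no minimal class (hence roots of positive-weight elements lie in $\Anonp$) and that refinement stays inside $\AJC$ via the access argument makes explicit what the paper leaves to the remark after Definition~\ref{definition1} and the line ``thus we can assume that $A$ and $B$ are in $\Anonp$''. Your observation that $\Gamma_i\setminus\Xnonp$ is $\phi_i^*$-null is correct but in fact unnecessary here: once you know that refining a cylinder with root in $\I_i$ produces pieces whose roots are all in $\A'\subseteq\Anonp$ (minimal classes being inaccessible from $\I_i$), nesting with $U$ is exact and no periodic remainder needs to be absorbed.
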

\begin{proof}
Let $U=\omega^n(C_A)-\bv\in \F_i$, with $A\in \AJC$ and $\bv\in J_A^{(n)}$. We need to show
\be \label{eq-want1}
\phi_i^*(E)\geq \phi_i^*(E\cap U)+\phi_i^*(E\setminus U)
\ee
for $E\seq \Gam_i$.\\
\noindent
\underline{\em Step 1.} Suppose first that
$E=\omega^m(C_B)-\bw$ is also in $\F_i$. If $A$  (resp.\ $B$)  is
in a minimal component, then $\phi_i(U)=0$ (resp.\ $\phi_i(E)=0$).
This immediately implies (\ref{eq-want1}).
Thus we can assume that $A$ and $B$ are in
$\Anonp$.
The inequality (\ref{eq-want1}) is also clear if $U\cap E=\emptyset$ or $E\setminus
U=\emptyset$.

If $m\geq n$, then $U\cap E\neq \emptyset$ implies that
$E\subseteq U$ by Lemma~\ref{partition}(iii); then $E\setminus U=\emptyset$ and we are done.

If $m<n$ and $U\cap E\neq \emptyset$, then $U\cap E=U$. In this
case we have
$$E\setminus U=\bigcup_{D\in \AJC}\bigcup_{\footnotesize{ \begin{array}{c}
  \bu\in J_D^{(n)} \\
  \omega^n(C_D)-\bu\subseteq E \\
  \bu\neq\bv \mbox{ if }D=A
\end{array} }}(\omega^n(C_D)-\bu),$$
which implies that
\begin{eqnarray*}
\phi_i^*(E\setminus U) & \leq & \sum_{D\in
\AJC}\sum_{\footnotesize{ \begin{array}{c}
  \bu\in J_D^{(n)} \\
  \omega^n(C_D)-\bu\subseteq E \\
\bu\neq\bv \mbox{ if }D=A
\end{array} }}\phi_i^*(\omega^n(C_D)-\bu)\\
  &=& \sum_{D\in \AJC}M^{n-m}(B,D)\by_{n,i}(D)-\phi_i^*(U).
\end{eqnarray*}
Therefore,
\begin{eqnarray*}
\phi_i^*(E) =  \by_{m,i}(B) & = & \sum_{D\in
\AJC}M^{n-m}(B,D)\by_{n,i}(D) \\
& \ge & \phi_i^*(E\setminus U)+\phi_i^*(U) = \phi_i^*(E\setminus U)+\phi_i^*(U\cap E),
\end{eqnarray*}
as desired.

\medskip

\underline{\em Step 2.}
If $E$ is any subset of $\Gamma_i$, then we have two
cases:

 (a) if $\phi_i^*(E)=\infty$,  then (\ref{eq-want1}) is clear.

 (b) If $\phi_i^*(E)<\infty$, then given $\varepsilon>0$, there
exists $(U_n)_{n\in\NN}\subseteq \F_i$ such that
      $$\sum_{n\in\NN} \phi_i^*(U_n)\leq \phi^*_i(E)+\varepsilon.$$
 Using Step 1 and the fact that $\phi^*_i$ is an outer measure,  we get
\begin{eqnarray*}
\phi_i^*(E\cap U)+\phi^*_i(E\setminus U)& \leq &
\phi^*_i(\bigcup_{n\in\NN} U_n\cap U)+\phi^*_i(\bigcup_{n\in\NN} U_n \setminus U)\\
& \leq & \sum_{n\in\NN}\left( \phi^*_i(U_n\cap U)+ \phi^*_i(U_n\setminus U)\right)\\
& \leq & \sum_{n\in\NN}\phi^*_i(U_n)\\
& \leq & \phi_i^*(E)+\varepsilon,
\end{eqnarray*}
concluding the proof.
\end{proof}

\medskip

In the sequel we will prove that under some conditions, the Borel sets
of $\Gamma_i$ (with respect to the induced topology) are contained
in $\eta^*$.
We define
$$
Y_i=\{\T\in \Xa: \T \mbox{ has a tile  equivalent to some }
A\in \A_i\}$$ and
$$\wt{\Gam}_i = \bigcup_{m\ge 0}\bigcap_{n\geq m}\bigcup_{A\in\A_i}\,\bigcup_{\bv\in
J_A^{(n)}}(\omega^n(C_A)-\bv)).
$$
Recall (Section 2.4) that in the special case when $\A_i$ is a
maximal irreducible component of the graph $G(M^T)$, the set $Y_i$
is a maximal component of the tiling space; here we consider the
same kind of set for an arbitrary non-minimal component.
\begin{lemma}
\label{supported-on-i} We have

{\em (i)} $\phi_i^*(\Gamma_i\setminus \wt{\Gam}_i) = 0$;

{\em (ii)} $\phi_i^*(\Gam_i \setminus Y_i) = 0$.
\end{lemma}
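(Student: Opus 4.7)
The plan is to prove (i) directly and then deduce (ii) as an immediate corollary. For (i), given $\T\in\Gam_i$ I will track the sequence $c_n(\T)\in\{1,\ldots,l\}$ of irreducible classes of the level-$n$ supertile at the origin, obtained by iterated preimages under $\om$; this is unique up to the periodic ambiguity of Lemma~\ref{partition}(ii), which can only enlarge the covers we construct and is thus harmless for outer-measure estimates. Because an $A_n$-tile occurring in $\om(A_{n+1})$ gives an edge in $G(M)$ from the class of $A_n$ to the class of $A_{n+1}$, the sequence $(c_n(\T))$ is non-increasing in the access order $\succeq$. A direct check using Definition~\ref{definition1} shows $\J_i^c$ is closed downward under $\succeq$, so once $c_n(\T)\in\J_i^c$ (which happens for some $n$ by the definition of $\Gam_i$) the value persists. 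Since $\succeq$ is a partial order on a finite set, $(c_n(\T))$ stabilizes at some $c_\infty(\T)\in\J_i^c$, and by inspection $\T\in\wt\Gam_i$ iff $c_\infty(\T)=i$. Setting $F_j:=\{\T\in\Gam_i:\,c_\infty(\T)=j\}$, the task reduces to showing $\phi_i^*(F_j)=0$ for every $j\in\J_i^c\setminus\{i\}$.

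Two disjoint cases arise. If $j\in(\I_i\cup\J_i)^c$, then by Definition~\ref{definition2} the function $\phi_i$ vanishes on every $\om^n(C_A)-\bv$ with $A\in\A_j$. Writing $F_j^{(m)}:=\{\T\in F_j:\,c_n(\T)=j\text{ for all }n\geq m\}$, so that $F_j=\bigcup_m F_j^{(m)}$, each $F_j^{(m)}$ is covered by the finite family $\{\om^m(C_A)-\bv:\,A\in\A_j,\,\bv\in J_A^{(m)}\}$ of $\phi_i$-null sets, so $\phi_i^*(F_j^{(m)})=0$ and hence $\phi_i^*(F_j)=0$ by countable sub-additivity. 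The substantive case is $j\in\I_i\setminus\{i\}$, where necessarily $\rho_j<\rho_i$. Again by sub-additivity it suffices to bound $\phi_i^*(F_j^{(m)})$ for each $m$, and after splitting $F_j^{(m)}$ into the finitely many pieces indexed by $A_m\in\A_j$ and $\bv_m\in J_{A_m}^{(m)}$, it is enough to bound $\phi_i^*\bigl(F_j^{(m)}\cap(\om^m(C_{A_m})-\bv_m)\bigr)$.

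The combinatorial heart of the argument is that a refinement $(\om^n(C_{A_n})-\bv_n)\seq(\om^m(C_{A_m})-\bv_m)$ with $A_n\in\A_j$ corresponds to a path $A_m\to\cdots\to A_n$ of length $n-m$ in $G(M)$; the intermediate classes form a non-increasing $\succeq$-chain with both endpoints equal to $j$, which on a partial order forces every intermediate class to equal $j$. Hence the path stays in $\A_j$, and the number of such refinements equals $M_j^{n-m}(A_m,A_n)$, where $M_j$ is the primitive block of $M$ on $\A_j$ with Perron eigenvalue $\rho_j$. Summing $\phi_i$ over this cover yields
\be
\phi_i^*\bigl(F_j^{(m)}\cap(\om^m(C_{A_m})-\bv_m)\bigr) \;\leq\; \frac{\bigl(M_j^{n-m}\bw_i|_{\A_j}\bigr)(A_m)}{\rho_i^n} \;\leq\; \frac{C}{\rho_j^m}\Bigl(\frac{\rho_j}{\rho_i}\Bigr)^{n}
\ee
for a constant $C$ depending only on $\bw_i|_{\A_j}$, by Perron-Frobenius asymptotics for the primitive matrix $M_j$. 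Letting $n\to\infty$ and using $\rho_j<\rho_i$ gives $0$, completing (i).

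Part (ii) is immediate: if $\T\in\wt\Gam_i$ then $\T$ contains some patch $\om^m(A)-\bv$ with $A\in\A_i$, and since $\om(\A_i)\seq\A_i^+$ this patch consists entirely of $\A_i$-tiles, so $\T\in Y_i$. Hence $\wt\Gam_i\seq Y_i$, whence $\Gam_i\setminus Y_i\seq\Gam_i\setminus\wt\Gam_i$ and (ii) follows from (i) by monotonicity. The main obstacle is the combinatorial identification of the refinement count with the primitive sub-block $M_j^{n-m}$ rather than the full $M^{n-m}$; this replacement of the ambient growth rate $\rho_i^n$ by $\rho_j^n$ is precisely what produces the decay factor $(\rho_j/\rho_i)^n$, exploiting the strict inequality $\rho_j<\rho_i$ guaranteed by $j\in\I_i\setminus\{i\}$.
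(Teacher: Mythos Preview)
Your argument for (i) is essentially the paper's own proof: the paper defines $\Gamma_{i,j,m}$ and $\Gamma_{i,j}$ which coincide with your $F_j^{(m)}$ and $F_j$, establishes the stabilization of classes (your $c_\infty(\T)$), disposes of the case $j\in(\I_i\cup\J_i)^c$ by the vanishing of $\phi_i$, and for $j\in\I_i\setminus\{i\}$ bounds the cover by $\sum_{B\in\A_j}M_j^{n-m}(A,B)\by_i(B)/\rho_i^n$ and lets $n\to\infty$ using $\rho_j<\rho_i$. The paper passes silently from $M^{n-m}(A,B)$ to $M_j^{n-m}(A,B)$ for $A,B\in\A_j$; your explicit observation that any $G(M)$-path between two $\A_j$-vertices stays in $\A_j$ (by antisymmetry of $\succeq$) is exactly the justification for that step.

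There is one slip in your derivation of (ii). You write ``since $\om(\A_i)\seq\A_i^+$ this patch consists entirely of $\A_i$-tiles.'' This is false: $\A_i$ is by hypothesis a \emph{non}-minimal class ($m+1\le i\le l$), so $\om(A)$ for $A\in\A_i$ will in general contain tiles from other classes. What you actually need---and what the paper uses---is only that $\om^m(A)$ contains \emph{at least one} tile from $\A_i$, which follows from the standing assumption that $M_i$ is primitive (a primitive matrix has no zero columns, so every $A\in\A_i$ has some $B\in\A_i$ occurring in $\om(A)$). With this correction your inclusion $\wt\Gam_i\seq Y_i$ goes through and (ii) follows from (i) exactly as you say.
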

\begin{proof} Note that (ii) follows from (i). Indeed, $\T\in \wt{\Gam}_i$ if and only if eventually all ``supertiles'' containing the
origin (see Remark~\ref{rem-inf}) have a type from $\A_i$. Since we assumed that $M_i \ne [0]$, any substitution of a tile in $\A_i$
must contain a tile in $\A_i$, so $\wt{\Gam}_i \seq Y_i$.\\
\indent
It remains to verify (i). For every $j\in \J_i^c$ and $m\geq 0$, we define
$$
\Gamma_{i,j,m}=\bigcap_{n\geq
m}\bigcup_{A\in\A_j}\bigcup_{\bv\in
J_A^{(n)}}(\omega^n(C_A)-\bv) \hspace{3mm} \mbox{ and
}\hspace{3mm} \Gamma_{i,j}=\bigcup_{m\geq 0}\Gamma_{i,j,m}.
$$
We have
\begin{equation} \label{eq-new2}
\Gamma_i\setminus \wt{\Gam}_i = \bigcup_{j\in \J_i^c\setminus \{i\}}\Gamma_{i,j}.
\end{equation}
Indeed, $\Gamma_{i,j}$ is the set of tilings for which the supertiles containing the origin are eventually of type from $\A_j$. Since we assumed that
all non-zero components of $M$ are primitive, the types of the supertiles containing the origin must stabilize into types from one of the components,
hence the claim (\ref{eq-new2}).
Let $j\in \J_i^c$, $j\ne i$. If $j$ has no access to $i$, then the
definition of $\phi_i^*$ implies that $\phi_i^*(\Gamma_{i,j})=0$.
If $M_j=[0]$ then $\Gamma_{i,j}=\emptyset$. Thus we can assume
that $j\in \I_i$ and $M_j$ is primitive. Let $A\in \A_j$, $m\geq
0$ and $\bv\in J_A^{(m)}$. For every $n\geq m$ we have
$$
\Gamma_{i,j,m}\cap(\omega^m(C_A)-\bv)\subseteq
\bigcup_{B\in\A_j}\bigcup_{\footnotesize{\begin{array}{c}
  \bw\in J_B^{(n)} \\
  \omega^n(C_B)-\bw\subseteq \omega^m(C_A)-\bv \\
\end{array}}}\!\!\!\!(\omega^n(C_B)-\bw),
$$
which implies that
\begin{eqnarray*}
\phi_i^*(\Gamma_{i,j,m}\cap(\omega^m(C_A)-\bv)) & \leq &
\sum_{B\in\A_j}M^{n-m}(A,B)\frac{\by_i(B)}{\rho_i^n}\\
  &=
  &\rho_j^{-m}\sum_{B\in\A_j}\frac{M_j^{n-m}(A,B)}{\rho_j^{n-m}}\left(\frac{\rho_j}{\rho_i}\right)^n\by_i(B)
\end{eqnarray*}
Since $\lim_{n\to \infty}(M_j^{n-m}(A,B)/\rho_j^{n-m})$ exists and
is finite, and since $\rho_j<\rho_i$, we get
$$\phi_i^*(\Gamma_{i,j,m}\cap(\omega^m(C_A)-\bv))=0.$$
This implies that $\phi_i^*(\Gamma_{i,j,m})=0$, and then
$\phi_i^*(\Gamma_i\setminus \wt{\Gamma}_i)=0$.
\end{proof}

\medskip

For $n\geq 0$ and $A\in \A$ we define
$$
I_{n,r,A}=\bigcup_{B\in \A}\{\bv\in J^{(n)}_{A,B}:
(\supp(B)+\bv)\cap (\partial D_{n,A})^{+r}\neq
\emptyset \}.
$$
In other words, $I_{n,r,A}$ is the set of $\bv$ such that $B+\bv$  occurs in $\om^n(A)$ for some $B$, within distance $r$ from the boundary.
It is straightforward to show that the   tilings $\T\in \Gamma_i$
for which there exists $\bv\in\RR^d$ such that $\T-\bv\in
\Gamma\setminus \Gamma_i$ are in
\be \label{def-C}
C=\bigcup_{r\in\NN}\bigcup_{k\in\NN}\bigcap_{n\geq k}\bigcup_{A\in\AJC}\bigcup_{\bv\in
I_{n,r,A}}(\omega^n(C_A)-\bv).
\ee
The set $C$ is a ``bad set'' for us: it is the set of tilings in $\Gam_i$ which ``belong to the border'' in the
following sense: the union of supertiles containing the origin, discussed in Remark~\ref{rem-inf}, is not the entire space $\R^d$.
Observe that
\be \label{eq-goodint}
\T \in \Gam_i \setminus C \ \Longrightarrow\ \bigcap_{n\in \NN} (\om^n(C_{A_n}) - \bv_n) = \{\T\},
\ee
where
$\omega^n(C_{A_n})-\bv_n\in\P_n$ is the set containing $\T$, for every $n\in\NN$. Note also that $C$ is translation-invariant.
The next lemma gives a sufficient condition
for $C$ to be negligible with respect to $\phi_i^*$.
\begin{lemma}
\label{hip} If there exist $D\in \A_i$ and $n>0$ such that a
translate of $D$ appears in the interior of $\omega^n(D)$, then
$C$ is negligible with respect to $\phi_i^*$.
\end{lemma}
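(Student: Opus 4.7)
My strategy is to use countable subadditivity to reduce to showing $\phi_i^*(C_{r,k})=0$ for each $r,k\in\NN$, where $C_{r,k}=\bigcap_{n\geq k}\bigcup_{A\in\AJC}\bigcup_{\bv\in I_{n,r,A}}(\om^n(C_A)-\bv)$. Since $C_{r,k}$ lies in the level-$n$ cover $\bigcup_{A\in\AJC}\bigcup_{\bv\in I_{n,r,A}}(\om^n(C_A)-\bv)$ for every $n\geq k$, applying the definition of $\phi_i^*$ with this cover gives, for every $n\geq k$,
$$
\phi_i^*(C_{r,k})\;\leq\;\sum_{A\in\AJC}|I_{n,r,A}|\,\by_{n,i}(A)\;=\;\sum_{A\in\I_i}|I_{n,r,A}|\,\frac{\bw_i(A)}{\rho_i^n},
$$
where the last equality uses that $\by_{n,i}$ vanishes off $\I_i$. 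Since $\A$ is finite, it suffices to show $|I_{n,r,A}|/\rho_i^n\to 0$ as $n\to\infty$ for each fixed $A\in\I_i$.

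Next, I propagate the interior-copy property from $D$ to all prototiles in $\I_i$. For $A\in\A_i$, primitivity of $M_i$ supplies $N_A$ with $M_i^{N_A}(D,A)>0$, so $\om^{N_A}(A)$ contains a $D$-tile. Applying $\om^{n_0}$ to this configuration, that $D$-tile is substituted into an $\om^{n_0}(D)$-sub-patch, and the hypothesis $D+\bx_0\subseteq \Int(\supp\om^{n_0}(D))$ with buffer $r_0$ then yields a $D$-tile strictly inside $\om^{N_A+n_0}(A)$ with buffer $r_0$. Setting $N^*:=\max_{A\in\A_i}(N_A+n_0)$ and iterating $\om^{n-N^*}$, for $n\geq N^*$ the patch $\om^n(A)$ contains an interior copy $\om^{n-N^*}(D)+\bz_n$ with buffer at least $\lambda_1^{n-N^*}r_0\to\infty$. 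For $A\in\I_i\setminus\A_i$, I first use the access relation to locate an $\A_i$-tile in some $\om^N(A)$ and apply the above argument inside that supertile.

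Once $\lambda_1^{n-N^*}r_0>r+\gam$, every tile counted by $I_{n,r,A}$ lies outside the inner copy, hence its support is contained in the outer shell $(\partial\om^n(A))^{+(r+\gam)}\cap\supp\om^n(A)$. Using the change of variables $\varphi^n$ and the lower bound $\|\varphi^n(\bx)\|\geq\lambda_1^n\|\bx\|$, the Lebesgue volume of this shell is at most $|\det\varphi|^n\,\vol((\partial\supp A)^{+(r+\gam)/\lambda_1^n})$, which decays to zero by the regularity (Minkowski content) of $\partial\supp A$; dividing by the minimum tile volume $\delta$ yields the target bound $|I_{n,r,A}|=o(\rho_i^n)$. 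The main obstacle is precisely this last quantitative step: the crude volume-to-surface bound only delivers $o(|\det\varphi|^n)$, whereas $\rho_i<|\det\varphi|$ forces us to extract more. The iterated interior-copy shield provides an exponentially growing buffer $\lambda_1^{n-N^*}r_0$ which traps boundary tiles in a thin outer shell, and combining this shield with a Minkowski-content estimate on $\partial\supp A$ (or, if necessary, iterating the shielding argument at nested scales) must beat the gap between $\rho_i$ and $|\det\varphi|$ so that the outer-shell mass is genuinely $o(\rho_i^n)$.
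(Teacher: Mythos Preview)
Your argument has a genuine gap, and you have correctly located it yourself: the volume estimate on the boundary annulus yields only $|I_{n,r,A}|=o(|\det\varphi|^n)$, whereas what is needed is $|I_{n,r,A}|=o(\rho_i^n)$, and for a non-minimal component one has $\rho_i<|\det\varphi|$ strictly. Your proposed remedies do not close this gap. A Minkowski-content bound of the form $\vol\bigl((\partial\supp A)^{+\epsilon}\bigr)=O(\epsilon^{\alpha})$ would give $|I_{n,r,A}|=O\bigl(|\det\varphi|^n/\lambda_1^{\alpha n}\bigr)$, but there is no a priori inequality linking $|\det\varphi|$, $\lambda_1$, $\rho_i$ and the boundary regularity of the prototiles that would force this to be $o(\rho_i^n)$; for a generic self-affine substitution it can fail. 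Note also that the ``buffer'' you extract from the interior-copy hypothesis plays no role in your actual estimate: the bound $|I_{n,r,A}|\le\delta^{-1}\vol\bigl((\partial D_{n,A})^{+(r+\gam)}\bigr)$ holds unconditionally, so as written the hypothesis is never used.

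The paper's proof avoids volume comparisons entirely. It first invokes Lemma~\ref{supported-on-i} to restrict attention to supertiles of type $\A_i$ (so the non-distinguished classes in $\I_i\setminus\{i\}$ are discarded at the outset, not handled by an access argument). It then works at \emph{two} scales: fix $m$ and, for $n>m$, decompose $\om^n(A)$ into level-$m$ supertiles and count how many of these lie on the border. The hypothesis gives $b_{n_0,B,A}\le\delta\,M^{n_0}(B,A)$ for some $\delta<1$, and iterating the substitution yields $b_{kn_0,B,A}\le\delta^k M^{kn_0}(B,A)$. Because $A,B$ range only over $\A_i$, the right-hand side is exactly of order $\rho_i^{kn_0}$, so summing against $\by_i(A)/\rho_i^{m+kn_0}$ produces a factor $\delta^k\to 0$. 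This is precisely the ``iterating the shielding argument at nested scales'' that you gesture at, but it must be done combinatorially (counting border supertiles against $M_i^{kn_0}$) rather than metrically (counting border tiles against $|\det\varphi|^n$).
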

\begin{proof}
In view of Lemma \ref{supported-on-i}, it is enough to show that for every $m\ge 0$,
$$
C_{i,m}:=C\cap  \bigcap_{n\geq m}\bigcup_{A\in\A_i}\bigcup_{\bv\in
J_A^{(n)}}(\omega^n(C_A)-\bv)
$$
is negligible with respect to $\phi_i^*$. It is clear that
$$
C_{i,m} = \bigcup_{r\in \NN} \bigcap_{n\ge m} \bigcup_{A\in \A_i} \bigcup_{\bv\in I_{n,r,A}} (\omega^n(C_A) - \bv).
$$
We have $C_{i,m} = \bigcup_{r\in \NN} \bigcap_{n\ge m} C_{i,n,r}$ where
$$
C_{i,n,r} = \bigcup_{A\in \A_i} \bigcup_{\bv \in I_{n,r,A}} (\om^n(C_A) - \bv).
$$
Fix $r\in \NN$. It is enough to show that $\phi^*(\bigcap_{n\ge m} C_{i,n,r}) = 0$ for $m$ sufficiently large, and we will do this by estimating
$\phi^*(C_{i,n,r} \cap C_{i,m,r})$ for $n > m$. For $A \in \A_i$ consider the decomposition of $\om^n(A)$ into supertiles of order $m$, which is
the inflated decomposition of $\om^{n-m}(A)$ into tiles. By the ``border'' of $\om^{n-m}(A)$ we mean the patch of tiles whose supports intersect
the boundary of $\supp(\om^{n-m}(A))$. Applying $\om^m$ to the border increases its width, hence we can choose $m$ sufficiently large, so that
any tile in $\om^n(A)$ within distance $r$ from $\partial D_{n,A} = \partial(\supp(\om^n(A)))$ belongs to a supertile $\om^m(B)$ in the
inflated border. Then we have
$$
\phi_i^*(C_{i,n,r} \cap C_{i,m,r}) \le \sum_{A\in \A_i} \sum_{B\in \A_i}  b_{n-m,B,A} |I_{m,r,B}| \frac{\by_i(A)}{\rho_i^n}\,,
$$
where $b_{k,B,A}$ is the number of different translates of $B$ which appear in the border of $\om^{k}(A)$.

On the other hand, the hypothesis implies that there exists
$n_0>0$ such that for every $A,B\in \A_i$ there exists a translate
of $B$ in the interior of $\om^{n_0}(A)$. Thus, there exists
$0\leq \delta<1$ such that $b_{n_0,B,A}\leq \delta M^{n_0}(B,A)$
for every $A,B\in \A_{i}$. Inductively, we deduce
$b_{kn_0,B,A}\leq \delta^k M^{kn_0}(B,A)$ for every $k>0$ and for
every $A,B\in \A_{i}$. Hence we get
\begin{eqnarray*}
\phi_i^*(C_{i,m+nn_0,r} \cap C_{i,m,r}) & \leq & \delta^n \sum_{A\in
\A_{i}}\sum_{B\in\A_{i}}M^{nn_0}(B,A)|I_{m,r,B}|\frac{\by_i(A)}{\rho_i^{m+nn_0}}\\
    & = & \delta^n\sum_{B\in\A_{i}}|I_{m,r,B}| \frac{\by_i(B)}{\rho_i^m},
\end{eqnarray*}
which implies that $\lim_{n\to\infty}\phi_i^*(C_{i,m+nn_0,r} \cap
C_{i,m,r})=0$ and then $\phi_i^*(C)=0$.
\end{proof}
In the sequel we will suppose that the  hypothesis of Lemma
\ref{hip} holds. That is, we will assume that
\begin{equation}
\label{hyp-hip} \exists \,A\in \A_i, \, \exists\, n>0 \mbox{ such
that a translate of } A \mbox{ appears in the interior of }
\omega^n(A).
\end{equation}
\begin{remark}
{\rm
If $\A_i$ is one of the maximal components, that is, $A\in \A_i$ does not appear in the substitution of any prototile from another component, then
(\ref{hyp-hip}) holds automatically, because the admissibility assumption implies that $A$ must appear in the interior of $\om^n(E)$ for some tile
$E$, which can only be from $\A_i$. Lemma \ref{hip} implies that in this case, the set
$C$ is always negligible with respect to $\phi_i^*$.}
\end{remark}

\begin{lemma}
\label{medida0} Suppose that $\omega$ verifies (\ref{hyp-hip}).
Then for every non-empty open set $U\subseteq \Gamma_i$ there
exists a countable collection $(C_n)_{n\in\NN}\subseteq \F_i$ of
disjoint sets such that $\bigcup_{n\in\NN}C_n\subseteq U$ and
$\phi_i^*(U\setminus \bigcup_{n\in\NN}C_n)=0$.  Thus
$\B(\Gamma_i)\subseteq \eta_i^*$.
\end{lemma}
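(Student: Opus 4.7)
The plan is, given any nonempty open $U \seq \Gam_i$, to exhibit a countable pairwise disjoint subfamily of $\F_i$ contained in $U$ whose union differs from $U$ by a $\phi_i^*$-null set. The final assertion $\B(\Gam_i) \seq \eta_i^*$ will then follow from Lemma~\ref{lem-meas1} together with the completeness of $\phi_i^*$ on $\eta_i^*$ (which automatically contains all $\phi_i^*$-null sets).

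First, I will invoke hypothesis (\ref{hyp-hip}) together with Lemma~\ref{hip} to get $\phi_i^*(C) = 0$, where $C$ is the set from (\ref{def-C}). The crucial geometric consequence, recorded in (\ref{eq-goodint}), is that for every $\T \in \Gam_i \setminus C$ the unique sequence $(\omega^n(C_{A_n(\T)})-\bv_n(\T))_{n\ge 0}$ of $\P_n$-elements containing $\T$ shrinks to $\{\T\}$, and by the definition of $\Gam_i$ these sets belong to $\F_i$ for all sufficiently large $n$. Since $U$ is open, for each $\T \in U \setminus C$ I may then choose the smallest integer $n(\T) \ge 0$ with $\omega^{n(\T)}(C_{A_{n(\T)}(\T)})-\bv_{n(\T)}(\T) \in \F_i$ and contained in $U$; call this set $F(\T)$.

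Next I plan to show that the family $\{F(\T) : \T \in U\setminus C\}$ is pairwise equal or disjoint. By Lemma~\ref{supported-on-i}, $\phi_i^*$ is concentrated on $\wt{\Gam}_i \seq Y_i \seq \Xnonp$, so working modulo a $\phi_i^*$-null set I may restrict attention to $U \cap \Xnonp \setminus C$. On $\Xnonp$, Lemma~\ref{partition}(ii)--(iii), in the strengthened nested-partition form (ii$'$)--(iii$'$) used in the proof of Lemma~\ref{finite}, shows that any two $\P_n$-elements that meet in $\Xnonp$ are either equal or one contains the other. Hence if $F(\T_1)\cap F(\T_2)\ne\es$ in $\Xnonp$ with $n(\T_1)\le n(\T_2)$, then $F(\T_2)\seq F(\T_1) \seq U$; the minimality in the definition of $n(\T_2)$ then forces $n(\T_1)=n(\T_2)$, whence $F(\T_1)=F(\T_2)$. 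Since $\F_i$ is countable, the distinct values among $\{F(\T)\}$ constitute a countable disjoint family $(C_n)_{n\in\NN}\seq\F_i$ with $\bigcup_n C_n \seq U$ and $U\setminus \bigcup_n C_n \seq C$, so $\phi_i^*(U\setminus \bigcup_n C_n)=0$.

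To finish, Lemma~\ref{lem-meas1} yields $\F_i \seq \eta_i^*$, and completeness lets us write every open $U \seq \Gam_i$ as $U = (\bigcup_n C_n) \cup N$ with $N$ a $\phi_i^*$-null set lying in $\eta_i^*$, so $U \in \eta_i^*$; since $\eta_i^*$ is a $\sig$-algebra, $\B(\Gam_i) \seq \eta_i^*$. The main obstacle will be the disjointness step: a priori two selected sets could overlap only at periodic tilings outside $\Xnonp$, where the $\P_n$ are a mere covering, and the cleanest way to sidestep this is precisely the reduction to $U \cap \Xnonp \setminus C$ using that $\phi_i^*$ is supported on $\wt{\Gam}_i$.
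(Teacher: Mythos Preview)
Your approach is essentially the paper's: cover $U$ (outside the null set $C$) by elements of $\F_i$, use the nesting property to arrange disjointness, and then conclude $\B(\Gamma_i)\subseteq\eta_i^*$ from Lemma~\ref{lem-meas1} and the completeness of $(\eta_i^*,\phi_i^*)$. The paper is in fact terser than you are on the disjointness step; it simply asserts that ``thanks to Lemma~\ref{partition}, the sets \ldots\ can be chosen disjoint.''

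There is, however, one small gap in your disjointness argument. You prove only that if $F(\T_1)\cap F(\T_2)\cap\Xnonp\ne\emptyset$ then $F(\T_1)=F(\T_2)$; from this you jump to ``the distinct values among $\{F(\T)\}$ constitute a countable disjoint family.'' This does not follow: two distinct $F(\T_1),F(\T_2)$ could a priori overlap only outside $\Xnonp$, and then they are neither equal nor disjoint as elements of $\F_i$. Concretely, this can happen precisely when the types $A_{n(\T_1)},A_{n(\T_2)}$ lie in $\A_{\per}\cap\AJC$ (a periodic minimal component with no access to $\A_i$), in which case Lemma~\ref{partition}(ii),(iii) gives no conclusion. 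The fix is easy and in the spirit of what you already suggest: restrict not just to $\Xnonp$ but to $\wt\Gamma_i$ (legitimate by Lemma~\ref{supported-on-i}), and choose $n(\T)$ minimal subject additionally to $A_{n(\T)}(\T)\in\Anonp$; this is possible since for $\T\in\wt\Gamma_i$ the supertile types eventually lie in $\A_i\subseteq\Anonp$. Then Lemma~\ref{partition}(ii),(iii) applies directly (not merely the primed versions) and yields genuine pairwise disjointness of the $F(\T)$ as subsets of $\Gamma_i$. With this adjustment your minimality argument goes through verbatim.
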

\begin{proof}
Let  $U_1\subseteq U$ the set of all $\T\in U$ for which there
exist $n_{\T}\in\NN$, $A_{\T}\in \A_{\J^c_i}$ and $\bv_{\T}\in
J_{A_{\T}}^{(n_{\T})}$ such that
$$
\T\in \omega^{n_{\T}}(C_{A_{\T}})-\bv_{\T}\subseteq U.
$$
Note that $U\setminus U_1\seq C$ by (\ref{eq-goodint}).
We have
$$
U_1\subseteq \bigcup_{\T\in
U_1}\omega^{n_{\T}}(C_{A_{\T}})-\bv_{\T}\subseteq U,
$$
and since the collection $\F_i$ is countable, there exists a
sequence $(\T_n)_{n\in\NN}\subseteq U_1$ such that
$$
U_1\subseteq\bigcup_{\T\in
U_1}\left(\omega^{n_{\T}}(C_{A_{\T}})-\bv_{\T}\right)=\bigcup_{k\in\NN}\left(\omega^{n_{\T_k}}(C_{A_{\T_k}})-\bv_{\T_k}\right)\subseteq
U.
$$
Moreover, thanks to Lemma \ref{partition}, the sets
$(\omega^{n_{\T_k}}(C_{A_{\T_k}})-\bv_{\T_k})_{k\in\NN}$ can be
chosen disjoint.
Since
$$
U\setminus
\bigcup_{k\in\NN}\left(\omega^{n_{\T_k}}(C_{A_{\T_k}})-\bv_{\T_k}\right)\subseteq
U\setminus U_1\subseteq C,
$$
Lemma \ref{hip} implies that $U$ is in $\eta_i^*$, because it is a countable
union of sets in $\F_i$ up to a negligible set with respect to
$\phi_i^*$.
\end{proof}

Now define $\mu^T_i: \B(\Gamma)\to \overline{\RR}_+$ by
$$\mu^T_i(U)=\phi_i^*(U\cap \Gamma_i) \mbox{ for every } U\in \B(\Gamma).$$
\begin{lemma}
\label{medida1} Suppose that $\omega$ verifies (\ref{hyp-hip}).
Then $\mu^T_i$ is a $\sigma$-finite transverse measure on
$\B(\Gamma)$ supported on $Y_i\cap \Gam$. Furthermore,
$$
\mu^T_i(C_A)=\by_i(A) \mbox{ for every } A\in\A.
$$
\end{lemma}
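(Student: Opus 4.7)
The plan is to dispatch the four assertions of the lemma in turn, standing on Lemma~\ref{medida0} (applicable because $\omega$ satisfies (\ref{hyp-hip})) and on the outer-measure machinery developed above. I would handle well-definedness, $\sigma$-finiteness, and support together: Lemma~\ref{medida0} gives $\B(\Gamma_i)\seq\eta_i^*$, so $\phi_i^*$ restricted to $\B(\Gamma_i)$ is a countably additive measure; since $\Gamma_i$ is a countable union of clopen sets, $\Gamma_i\in\B(\Gamma)$, and $\mu_i^T(U):=\phi_i^*(U\cap\Gamma_i)$ is a well-defined Borel measure on $\Gamma$. The countable family $\{\om^n(C_A)-\bv:A\in\AJC,\bv\in J_A^{(n)},n\ge 0\}$ covers $\Gamma_i$ with sets of finite $\phi_i$-value $\by_{n,i}(A)<\infty$ (this finiteness is the very point of restricting to $\AJC$), and $\mu_i^T$ vanishes off $\Gamma_i$, giving $\sigma$-finiteness. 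Lemma~\ref{supported-on-i}(ii) then yields $\phi_i^*(\Gamma_i\setminus Y_i)=0$, so $\mu_i^T$ is supported on $Y_i\cap\Gamma$.

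For the transverse property I would exploit that $\phi_i(\om^n(C_A)-\bv)=\by_{n,i}(A)$ depends only on $(n,A)$. If $U=\om^n(C_A)-\bv\in\F_i$ and $\bw\in\R^d$ satisfies $U-\bw\seq\Gamma$, then the origin must remain a tile-center in each translated tiling, forcing $\bv+\bw\in J_A^{(n)}$; hence $U-\bw=\om^n(C_A)-(\bv+\bw)\in\F_i$ with identical $\phi_i$-value. Countable $\F_i$-covers of $V\cap\Gamma_i$ therefore translate to $\F_i$-covers of $(V-\bw)\cap\Gamma_i$ of the same total mass, using also that $\Gamma_i$ is invariant under translations preserving the transversal (the type of the nesting supertile at the origin is preserved). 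This gives translation invariance of $\phi_i^*$, and hence of $\mu_i^T$.

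The substantive content is the formula $\mu_i^T(C_A)=\by_i(A)$. For any $A\in\A$ and $n\ge 1$, Lemma~\ref{partition}(i) gives the refinement
$$C_A=\bigcup_{B\in\A}\bigcup_{\bv\in J_{B,A}^{(n)}}(\om^n(C_B)-\bv),$$
essentially disjoint by Lemma~\ref{partition}(ii) (overlaps occur only among periodic tiles in minimal components with no access to $\A_i$, which carry zero $\by_i$-weight and are invisible to $\phi_i^*$). For $A\in\AJC$ the definitions of $\I_i$ and $\J_i$ force $M^n(A,B)=0$ whenever $B\in\A_{\J_i}$: such a $B$ would give class $A$ access to a $\J_i$-class, placing class $A$ itself into $\J_i$ or merging $\alpha_i$ with the class of $B$. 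Hence only $B\in\AJC$ contribute, each $\om^n(C_B)-\bv$ lies in $\F_i\subseteq\eta_i^*$, and countable additivity together with the identity $M^n\by_{n,i}=\by_i$ from Lemma~\ref{infinite-distinguished} yields
$$\mu_i^T(C_A)=\sum_{B\in\AJC}M^n(A,B)\,\by_{n,i}(B)=(M^n\by_{n,i})(A)=\by_i(A).$$
For $A\in\A_{\J_i}$ the same refinement restricted to $B\in\A_i$ gives the lower bound $\mu_i^T(C_A)\ge\sum_{B\in\A_i}M^n(A,B)\by_i(B)/\rho_i^n$, and the access of $A$'s class to some $\alpha_k$ with $\rho_k\ge\rho_i$ that in turn reaches $\A_i$ drives the right-hand side to infinity as $n\to\infty$ via the asymptotic mechanism of Lemma~\ref{help-lemma}. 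Thus $\mu_i^T(C_A)=\infty=\by_i(A)$.

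The step I expect to be trickiest is the disjointness bookkeeping in the refinement when periodic prototiles are present: Lemma~\ref{partition}(ii) only rules out overlaps outside tilings made entirely of $\Aper$-tiles. The remedy is to observe that any such offending overlap sits in a minimal component $X_j$ whose class has no access to $\A_i$, so the involved $\F_i$-sets carry $\by_i$-weight zero and the double-counting is invisible to $\phi_i^*$; alternatively, one intersects with $\Xnonp$ exactly as in the proof of Lemma~\ref{finite} to recover a genuine partition. Once this is absorbed, both halves of the formula reduce cleanly to the matrix relation $M^n\by_{n,i}=\by_i$ and the asymptotics already exploited in Lemma~\ref{help-lemma}.
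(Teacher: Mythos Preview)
Your treatment of well-definedness, $\sigma$-finiteness, support, and the formula $\mu_i^T(C_A)=\by_i(A)$ is essentially sound and parallels the paper.  The substantive problem is your argument for the transverse property.

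You claim that if $U=\om^n(C_A)-\bv\in\F_i$ and $U-\bw\subseteq\Gamma$, then necessarily $\bv+\bw\in J_A^{(n)}$, so that $U-\bw$ is again an element of $\F_i$ with the same $\phi_i$-value.  This implication is false.  A tiling $\T\in U$ has the form $\om^n(\S)-\bv$ with $\S\in C_A$; the condition $\T-\bw\in\Gamma$ only says that $\bv+\bw$ is a tile center of the \emph{entire tiling} $\om^n(\S)$, not that it lies inside the particular supertile $\om^n(A)$.  The translation $\bw$ may push the origin across the boundary of $\om^n(A)$ into a neighboring supertile (think of tilings with all prototiles of the same shape: every lattice point is a tile center regardless of which supertile it sits in).  Consequently $U-\bw$ need not lie in $\F_i$, and there is no reason for countable $\F_i$-covers to transfer with equal mass.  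Relatedly, your assertion that $\Gamma_i$ is invariant under transversal-preserving translations is not true on the nose: a tiling in $\Gamma_i$ may well contain supertiles of $\J_i$-type elsewhere, and translating can land the origin in one of them.

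The paper's proof repairs exactly this.  For $U\in\F_i$ it refines $U$ into level-$m$ supertile cells for $m>n$; those cells for which the translate stays inside the same $m$-supertile contribute equally to $\mu_i^T(U)$ and $\mu_i^T(U-\bw)$, while the residual set $U_m$ (cells near the supertile boundary) satisfies $\bigcap_m U_m\subseteq C$ and $\bigcap_m(U_m-\bw)\subseteq C$, hence is $\phi_i^*$-null by Lemma~\ref{hip}.  This limiting argument, together with the approximation of Borel sets by $\F_i$-unions from Lemma~\ref{medida0}, is what actually yields invariance; your shortcut skips the essential use of (\ref{hyp-hip}) via the negligibility of $C$.
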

\begin{proof}
Lemma \ref{medida0} ensures that the restriction of $\phi_i^*$ to
$\B(\Gamma_i)$ is a measure. Then $\mu^T_i$ is a measure on
$\B(\Gamma)$. It is $\sig$-finite because $\mu_i^T(\Gamma\setminus
\Gamma_i)=0$ and   $\Gamma_i$ is a countable union of sets of
finite measure. Lemma \ref{supported-on-i} implies that $\mu_i^T$
is supported on $Y_i\cap \Gam$.

Next we prove that $\mu_i^T$ is transverse. Let $U\in \B(\Gamma)$
and $\bv\in\RR^d$ be such that $U-\bv\subseteq \Gamma$.

\medskip

\underline{\em Step 1.} First we suppose that
$U=\omega^n(C_A)-\bu$, for some  $A\in \AJC\cap \A'$,
$\bu\in J_A^{(n)}$ and $n\geq 0$. If $\bu+\bv\in
J_A^{(n)}$ then by definition of $\mu_i^T$ we have
$\mu^T_i(U)=\mu^T_i(U-\bv)$. If not, for $m>n$ consider the
sets $\omega^m(C_{A_{m,1}})-\bu_{m,1}, \cdots,
\omega^m(C_{A_{m,k_m}})-\bu_{m,k_m}$ in $\F_i$ whose union is
equal to $U$ (this corresponds to looking at $m$-level supertiles
and finding translates of $\om^n(A)$ in them). This union is
disjoint since $A\in \A'$. Let $J_m=\{1\leq i \leq k_m:
u_{m,i}+\bv \in J_{A_{m,i}}^{(m)}\}$ and $U_m=\bigcup_{i\notin
J_m}(\omega^m(C_{A_{m,i}})-\bu_{m,i})$. We have
\begin{eqnarray*}
\mu_i^T(U)& = &\sum_{i\in
J_m}\mu_i^T(\omega^m(C_{A_{m,i}})-\bu_{m,i}) +\mu_i^T(U_m),\\
\mu_i^T(U-\bv) & = & \sum_{i\in
J_m}\mu_i^T(\omega^m(C_{A_{m,i}})-\bu_{m,i})+\mu_i^T(U_m-\bv),
\end{eqnarray*}
hence
$$
\mu_i^T(U)-\mu_i^T(U-\bv) = \mu_i^T(U_m) - \mu_i^T(U_m-\bv).
$$
Note that $U_{m+1} \seq U_m$ and $\bigcap_{m} U_m \seq C,\ \ \bigcap_{m} (U_m -\bv)\seq C$, so
Lemma \ref{hip} implies
$\mu^T_i(U-\bv)=\mu^T_i(U)$.
If $A\in \AJC \setminus \A'$, then $A$ is in a minimal component which has no access to $\A_i$ and we have $\mu_i^T(U) = 0$.
Then a similar argument yields $\mu_i^T(U - \bv) \le \mu_i^T(U_m-\bv)$, whence $\mu_i^T(U-\bv)=0$.

\medskip

\underline{\em Step 2.} Now we suppose that $U\subseteq \Gamma_i$
is an open set. Let $(C_n)_{n\in\NN}\subseteq \F_i$ be a disjoint
collection of sets such that $\bigcup_{n\in\NN}C_n\subseteq U$ and
$\mu^T_i(U)=\sum_{n\in\NN}\mu_i^T(C_n)$. This collection exists
due to Lemma \ref{medida0}. On the one hand, Step 1 implies that
$\mu^T_i(U)=\sum_{n\in\NN}\mu^T_i(C_n-\bv)=\mu^T_i(\bigcup_{n\in\NN}(C_n-\bv))$.
On the other hand, $(U-\bv)\setminus
\bigcup_{n\in\NN}(C_n-\bv)\subseteq C$ which implies that
$\mu^T_i(U-\bv)=\mu_i^T(U)$.

\medskip

\underline{\em Step 3.} Now let $U$ be any set in $\B(\Gamma)$.
Since the elements in $\F_i$ are clopen sets in $\Gamma_i$,   we
have
\begin{eqnarray}
\phi_i^*(U\cap\Gamma_i)& = & \inf\left\{\sum_{n\in\NN}\mu_i^T(C_n):
(C_n)_{n\in\NN}\subseteq \F_i\cup\emptyset, U\cap\Gamma_i\subseteq
\bigcup_{n\in\NN}C_n\right\}\nonumber\\
&\geq & \inf\{\mu^T_i(V): U\cap\Gamma_i\subseteq V, \mbox{ and } V
\mbox{ is
open in } \Gamma_i\}. \label{eq-new3}
\end{eqnarray}
Let $V$ be an open set in $\Gamma_i$ which contains $U\cap \Gamma_i$.
We have
$$
\mu_i^T((U\cap\Gamma_i)-\bv)\leq \mu_i^T(V-\bv)=\mu_i^T(V),
$$
which implies, by (\ref{eq-new3}),
$$
 \mu_i^T((U\cap\Gamma_i)-\bv)\leq \phi_i^*(U\cap \Gamma_i) = \mu_i^T(U).
$$
We claim that
$\mu_i^T((U\cap\Gamma_i)-\bv)=\mu_i^T(U-\bv)$. Indeed,
$$
\mu_i^T(U-\bv) = \mu_i^T((U\cap \Gamma_i) - \bv) + \mu_i^T((U\cap \Gamma_i^c) - \bv),
$$
but
$$
\mu_i^T((U\cap \Gamma_i^c) - \bv) = \mu_i^T(((U\cap \Gamma_i^c) - \bv)\cap \Gamma_i) = 0,
$$
because $((U\cap \Gamma_i^c) - \bv)\cap \Gamma_i\seq C$.
The claim is proved, and we obtain $\mu_i^T(U - \bv) \le \mu_i^T(U)$.

Finally, replacing $U$ by $U-\bv$ and $\bv$ by $-\bv$ we get
$\mu_i^T(U)=\mu_i^T(U-\bv)$.
This concludes the proof that $\mu_i^T$ is a transverse measure.

\medskip

It remains to verify the formula. By definition,
$\mu_i^T(C_A)=\by_i(A)$ for every $A\in\AJC$. If $A\in \J_i$
then Lemma \ref{coreN}, Lemma \ref{help-lemma} and Lemma
\ref{finite} imply that $\mu_i^T(C_A)=\infty$, which is equal to
$\by_i(A)$.
\end{proof}

\begin{lemma}\label{conditions-zero-measure}
Let $\mu$ be an invariant $\sigma$-finite measure of the tiling
system $(X,\RR^d)$, and let $U\in \B(X)$ be an invariant set such
that $\mu(U)=0$. Then $\mu^T(U\cap \Gamma)=0$.
\end{lemma}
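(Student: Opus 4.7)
The plan is to exploit the local product structure between the transverse measure $\mu^T$ on $\Gamma$ and the invariant measure $\mu$ on $X$ that is established in the Appendix. The key relation one needs is: for any Borel set $A\subseteq \Gamma$ and any open set $\Theta$ contained in a sufficiently small ball around $\b0$ (specifically, with $\Theta\seq B_\eta(\b0)$ so that tiles in the transversal setup locally identify $A-\Theta$ with a ``product''), one has
\[
\mu(A-\Theta)\;=\;\vol(\Theta)\cdot \mu^T(A).
\]
This is the content of the measure-theoretic correspondence discussed in Section~\ref{Apendix A}, extended there to the non-minimal, $\sig$-finite setting.

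With that formula in hand, the argument is short. Set $A:=U\cap \Gamma$; since $\B(\Gamma)=\B(X)\cap \Gamma$, we have $A\in\B(\Gamma)$ and hence $\mu^T(A)$ is defined. By the remark in Section~\ref{preliminaries}, $A-\Theta\in\B(X)$ for every open $\Theta\seq\RR^d$. Translation invariance of $U$ gives $A-\Theta\seq U$: for every $\T\in A$ and $\bg\in\Theta$, $\T-\bg\in U-\bg=U$. Therefore $\mu(A-\Theta)\le \mu(U)=0$, and plugging this into the local product formula yields $\vol(\Theta)\cdot\mu^T(A)=0$. Choosing any $\Theta$ of positive volume (which is possible since $\eta>0$), we conclude $\mu^T(U\cap\Gamma)=\mu^T(A)=0$.

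The only subtle point is the formulation and applicability of the local product formula in the $\sig$-finite case, since classically it is stated for finite invariant/transverse measures. I would either quote directly the version of this identity proved in the Appendix, or, if only a finite-measure version is stated there, first write $A=\bigcup_n A_n$ as an increasing union of Borel subsets with $\mu^T(A_n)<\infty$, apply the product identity to each $A_n$ to get $\mu^T(A_n)=0$, and then take the limit using $\sig$-additivity of $\mu^T$. This is the main (and essentially the only) obstacle; once the product identity is invoked, the invariance of $U$ does all the work.
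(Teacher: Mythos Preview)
Your argument is correct and is essentially the same as the paper's: both invoke the product identity $\mu(A+\Theta)=\vol(\Theta)\,\mu^T(A)$ from Lemma~\ref{invariant-transversal} (which is stated there for arbitrary invariant measures, so your $\sigma$-finite concern is already handled) together with the invariance of $U$ to conclude that the relevant set has $\mu$-measure zero. The only cosmetic difference is that the paper applies the identity to each $U\cap C_P$ and then uses $\Gamma=\bigcup_{A\in\A} C_A$, whereas you apply it directly to $U\cap\Gamma$; also note the paper's convention is $A+\Theta$ rather than $A-\Theta$, but since $B_\eta(\b0)$ is symmetric this is immaterial.
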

\begin{proof}
Since $U$ is invariant, for every patch $P$ we have $U\cap
(C_P+B_{\varepsilon}(\b0))=(U\cap C_P)+B_{\varepsilon}(\b0)$. Thus if
$P$ is centered at $\b0$ and $\varepsilon$ is sufficiently small, we
get $$0=\mu(U\cap (C_P+B_{\varepsilon}(\b0)))=\mu^T(U\cap
C_P)\vol(B_{\varepsilon}(\b0))$$
by Lemmas \ref{invariant-transversal} and \ref{transversal-0}. Since $\Gamma=\bigcup_{A\in\A}C_A$,
we deduce $\mu^T(U\cap \Gamma)=0$.
\end{proof}
\begin{lemma}
\label{ergodic} Let $\mu$ and $\nu$ be two $\sig$-finite ergodic invariant
measures for the tiling system $(X,\RR^d)$ for which there exists
$A\in \A$ such that $0<\mu^T(C_A)=\nu^T(C_A)<\infty$ and
$\mu^T|_{\B(C_A)}=\nu^T|_{\B(C_A)}$. Then $\mu=\nu$.
\end{lemma}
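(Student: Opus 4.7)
My plan is to use the Appendix correspondence between $\sig$-finite invariant measures and $\sig$-finite transverse measures to reduce the problem to showing $\mu^T = \nu^T$ on $\B(\Gam)$; since the clopen cylinder sets $C_P$ with $P\in \Lambda_X$ form a countable generating family for $\B(\Gam)$, it is enough to verify $\mu^T(C_P) = \nu^T(C_P)$ for every $P\in \Lambda_X$.

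The first step is to localize both measures. Consider the $\RR^d$-invariant open set
$$
U \,=\, \bigcup_{\bv\in \RR^d}(C_A - \bv) \,=\, \{\,\T\in X:\ \T\ \mbox{contains a tile of type}\ A\,\}.
$$
The Appendix formula $\mu(C_A+B_\eta(\b0)) = \vol(B_\eta(\b0))\,\mu^T(C_A)$, together with the hypothesis $\mu^T(C_A)>0$, gives $\mu(U)>0$, and ergodicity of $\mu$ then forces $\mu(X\setminus U) = 0$; the same holds for $\nu$. Lemma~\ref{conditions-zero-measure} yields $\mu^T(\Gam\setminus U) = \nu^T(\Gam\setminus U) = 0$, so it suffices to show $\mu^T(C_P\cap U) = \nu^T(C_P\cap U)$ for every $P\in \Lambda_X$.

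The argument then splits into two cases. If $A + \bv_P \in P$ for some $\bv_P$, then $P-\bv_P$ is an admissible patch containing $A$ at the origin, and $C_P - \bv_P = C_{P-\bv_P}\seq C_A$. Transversality of both $\mu^T$ and $\nu^T$, combined with their agreement on $\B(C_A)$, gives, for every Borel $E\seq C_P$,
$$
\mu^T(E) \,=\, \mu^T(E-\bv_P) \,=\, \nu^T(E-\bv_P) \,=\, \nu^T(E).
$$
If instead $A$ does not appear in $P$, let $V_P^A$ be the set of all $\bv\in \RR^d$ such that $P\cup \{A+\bv\}$ is admissible; this set is countable because FPC only allows finitely many admissible patches in any bounded region. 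Enumerating $V_P^A = \{\bv_1,\bv_2,\ldots\}$ and setting $Q_k = P\cup \{A+\bv_k\}$, the sets $D_k := C_{Q_k}\setminus \bigcup_{j<k} C_{Q_j}$ form a Borel partition of $C_P\cap U$, each $D_k$ being a Borel subset of $C_{Q_k}$, to which the first case applies. Summing yields $\mu^T(C_P\cap U) = \nu^T(C_P\cap U)$.

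The conceptually delicate step is the localization to $U$: it needs the $\sig$-finite version of the $\mu\leftrightarrow \mu^T$ correspondence, which the Appendix supplies, and uses ergodicity in its $\R^d$-action form. Once this is in place, the case split propagates the hypothesis $\mu^T|_{\B(C_A)}=\nu^T|_{\B(C_A)}$ to every cylinder set via transversality, and the countability of $V_P^A$ ensures the measurable partitioning works. No single step appears to be a substantive obstacle.
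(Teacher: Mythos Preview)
Your argument is correct and follows essentially the same route as the paper: localize to the invariant open set $U$ of tilings containing a copy of $A$ via ergodicity and Lemma~\ref{conditions-zero-measure}, then decompose using a countable family of translation vectors and push each piece into $C_A$ by transversality. The only difference is bookkeeping: the paper works directly with an arbitrary $V\in\B(\Gamma)$ and decomposes $V\cap U$ using the countable set $\Lambda=\{\bv\in\RR^d:(\Gamma-\bv)\cap\Gamma\ne\emptyset\}$, whereas you first reduce to cylinder sets $C_P$ and decompose via the positions $V_P^A$ at which $A$ can be adjoined.

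One small remark: your opening reduction (``it is enough to verify $\mu^T(C_P)=\nu^T(C_P)$'') is not automatic for $\sigma$-finite measures---agreement on a generating $\pi$-system requires a countable cover by $\pi$-system sets of finite measure, which you did not check. This is easily supplied (the cylinders $C_{\{B,\,A+\bv\}}$ with $B\in\A$, $\bv\in\Lambda$ cover $\Gamma\cap U$ and each has transverse measure at most $\mu^T(C_A)<\infty$), but in fact the detour is unnecessary: your Case~1 already proves $\mu^T=\nu^T$ on all of $\B(C_P)$ whenever $P$ contains $A$, so the Case~2 partition gives equality on all Borel subsets of $C_P\cap U$ for every $P$, and summing over the disjoint cover $\Gamma=\bigsqcup_{B\in\A}C_B$ finishes without any $\pi$-$\lambda$ argument. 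That is exactly what the paper does, just phrased for general $V$ from the start.
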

\begin{proof}
Let $U$ be the subset of $X$ of all the tilings $\T$ containing a
tile   equivalent to  $A$. This set is open and invariant, and
contains the set $C_A+B_{\varepsilon}(\b0)$, for every
$\varepsilon>0$. Thus, for $\varepsilon$ sufficiently small we get
$\mu(U),\nu(U)\geq \mu^T(C_A)\vol(B_{\varepsilon}(\b0))>0$. This
implies, by the ergodicity of   $\mu$ and $\nu$, that
$\mu(U^c)=\nu(U^c)=0$. Since $U^c$ is invariant, Lemma
\ref{conditions-zero-measure} implies that
$\mu^T(U^c\cap\Gamma)=\nu^T(U^c\cap\Gamma)=0$. Then for every
$V\in \B(\Gamma)$, $\mu^T(V)=\mu^T(V\cap U)$ and
$\nu^T(V)=\nu^T(V\cap U)$.
Let $\Lambda=\{\bv\in\RR^d: (\Gamma-\bv)\cap \Gamma\neq
\emptyset\}$. This set is countable because $X$ satisfies the FPC; let
$\Lambda=\{\bv_n: n\geq 0\}$. For $V\in\B(\Gamma)$, we
have
$$
U\cap V=\bigcup_{n\geq 0}(C_A+\bv_n)\cap V=\bigcup_{n\geq 0}
V_n,
$$
where $V_0=V\cap(C_A+\bv_0)$ and
$V_n=(V\cap(C_A+\bv_n))\setminus(V_0\cup\cdots\cup V_{n-1})$,
for $n>0$. Then
$$
\mu^T(V)=\mu^T(V\cap U)=\sum_{n\geq 0}\mu^T(V_n) \hspace{2mm}
\mbox{ and } \hspace{2mm} \nu^T(V)=\nu^T(V\cap U)=\sum_{n\geq
0}\nu^T(V_n).
$$
Since for every $n\geq 0$ we have $V_n-\bv_n\subseteq C_A$, we
get
$$\mu^T(V_n)=\mu^T(V_n-\bv_n)=\nu^T(V_n-\bv_n)=\nu^T(V_n),$$
which implies that $\mu^T(V)=\nu^T(V)$. Theorem
\ref{correspondence} implies $\mu=\nu$.
\end{proof}
\begin{theorem}
\label{final-theorem-infinite} Let $\om$ be an admissible tile substitution, which is partially recognizable and satisfies (\ref{eq-added}) and (\ref{eq-irre}).
\begin{enumerate}
\item[(i)] Let $m+1\leq i\leq l$ be such that
$M_i$ is primitive, and suppose that (\ref{hyp-hip}) holds for $\A_i$. Then the
measure $\mu^T_i$ is the unique transverse measure on $\B(\Gamma)$
supported on $Y_i\cap \Gamma$ such that
$$
\mu^T_i(C_A)=\by_i(A) \mbox{ for every } A\in \A.
$$
Moreover, the associated invariant measure $\mu_i$ is
$\sig$-finite and ergodic. \item[(ii)] Suppose that
(\ref{hyp-hip}) holds for all $\A_i \seq \A'$. Then any
$\sig$-finite ergodic measure $\mu$ with the property that
$0<\mu^T(C_A)<\infty$ for some $A\in\A'$, is equal to some measure
$\mu_i$ up to scaling. \item[(iii)] If $Y_i$ is a maximal
component, then any $\sig$-finite ergodic measure on $Y_i$ which
is positive and finite on some open set, is equal to $\mu_i$ up to
scaling. (Note that (\ref{hyp-hip}) holds for maximal components
by admissibility.)
\end{enumerate}
\end{theorem}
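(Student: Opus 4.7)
The existence of $\mu_i^T$ in part (i) is Lemma \ref{medida1}, so the plan focuses on uniqueness, ergodicity of the associated $\mu_i$, and then parts (ii)--(iii). For uniqueness in (i), suppose $\nu^T$ is any transverse measure on $\B(\Gamma)$ supported on $Y_i\cap\Gamma$ with $\nu^T(C_A)=\by_i(A)$ for every $A\in\A$. I would first show $\nu^T=\mu_i^T$ on the countable clopen generating family $\F_i$. Because $\nu^T$ is transverse, the values $\nu_{n,A}^T:=\nu^T(\omega^n(C_A)-\bv)$ depend only on $A,n$, and the decomposition in Lemma \ref{partition} together with Lemma \ref{coreN} forces the Perron recursion $M^n\nu_n^T=\nu_0^T=\by_i$. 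Combining this with Lemma \ref{help-lemma} and Lemma \ref{infinite-distinguished}, and the support hypothesis on $Y_i\cap\Gamma$ (which kills ``parasitic'' solutions living on classes with no access to $\A_i$), pins down $\nu_n^T=\by_{n,i}$, matching $\mu_i^T$ on $\F_i$. The extension to $\B(\Gamma_i)$ via outer measure uses that the boundary set $C$ of (\ref{def-C}) is null for both measures: for $\mu_i^T$ by Lemma \ref{hip}, and for $\nu^T$ via its support on $Y_i\cap\Gamma\subseteq\Gamma_i$. Outside $\Gamma_i$ both measures vanish.

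For ergodicity of $\mu_i$ I would argue by contradiction. An $\R^d$-invariant splitting $\Xa=U\sqcup U^c$ with $0<\mu_i(U)<\mu_i(\Xa)$ yields two invariant $\sigma$-finite measures whose transverse counterparts remain supported on $Y_i\cap\Gamma$. Their value-vectors $(\cdot^T(C_A))_{A\in\A}$ still lie in $core_{\infty}(M)$ (via the $\Xnonp$-refinement of Lemma \ref{coreN} used in the proof of Lemma \ref{finite}) and sum to $\by_i$. Lemmas \ref{help-lemma}--\ref{relation-between-eigenvalues} then force each piece's value-vector to be a scalar multiple of $\by_i$, and applying uniqueness from (i) to each piece gives scalar multiples of $\mu_i$, contradicting the nontrivial splitting.

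Part (ii) follows the same cone-theoretic strategy: for an ergodic $\mu$ with $0<\mu^T(C_A)<\infty$ at some $A\in\A'$, the vector $(\mu^T(C_B))_{B\in\A}$ belongs to $core_{\infty}(M)$, decomposes as $\sum_j\lambda_j\by_j+\sum_j\delta_j\bz_j$ by Lemma \ref{relation-between-eigenvalues}, and the finiteness at $A$ rules out any $\by_j$ or $\bz_j$ whose support puts $A$ at an infinite coordinate. Ergodicity (same argument as in (i)) collapses the surviving terms to a single $\lambda_i\by_i$, and part (i) then identifies $\mu=\lambda_i\mu_i$. For part (iii), any open set of finite positive $\mu$-mass contains some $C_A+\Theta$, yielding $0<\mu^T(C_A)<\infty$; since $\mu$ lives on the maximal component $Y_i$, translation-invariance of $\mu$ together with Lemma \ref{lem-maxim} allows one to replace $A$, if necessary, by some $B\in\A_i\subseteq\A'$ with $0<\mu^T(C_B)<\infty$. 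Part (ii) then gives $\mu=\lambda_j\mu_j$, and disjointness of the maximal components (Lemma \ref{lem-maxim}(i)) forces $j=i$.

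The main obstacle I anticipate is the uniqueness step in (i): the infinite coordinates $\by_i(A)=\infty$ for $A\in\J_i$ preclude a direct Carath\'eodory-style extension, so one has to carefully exploit the support condition $\nu^T(\Gamma\setminus(Y_i\cap\Gamma))=0$ together with the $\sigma$-finiteness of $\mu_i^T$ on the clopen family $\F_i$ to reduce the comparison to a countable subfamily of finite-measure generators where the outer-measure extension runs smoothly.
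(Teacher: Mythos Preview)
Your strategy for (i) matches the paper's: uniqueness by showing agreement on the clopen family $\F_i$ and extending, and ergodicity by restricting $\mu_i$ to an invariant set $U$ and showing the resulting value-vector $(\mu_i^T(C_A\cap U))_{A\in\A}$ must be a scalar multiple of $\by_i$. In (ii) and (iii), however, your argument has gaps that the paper closes by different means.

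In (ii) you finish by invoking the uniqueness clause of (i) to identify $\mu$ with $\lambda_i\mu_i$, but that clause requires $\mu^T$ to be supported on $Y_i\cap\Gamma$, which you have not established for the given ergodic $\mu$. The paper bypasses this: it sets $i=\max\{j:\,0<\mu^T(C_A)<\infty,\ A\in\A_j\}$, uses Lemma \ref{coreN} on the primitive block $M_i$ to show $\mu^T$ and $\lambda\mu_i^T$ agree on all of $\B(C_A)$ for $A\in\A_i$, and then invokes Lemma \ref{ergodic}, which says two ergodic $\sigma$-finite measures agreeing on $\B(C_A)$ for a single $A$ with $0<\mu^T(C_A)<\infty$ must coincide. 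Your route can be repaired (once $\mu^T(C_A)>0$ for some $A\in\A_i$, ergodicity gives $\mu(Y_i^c)=0$, and Lemma \ref{conditions-zero-measure} then supplies the support condition), but Lemma \ref{ergodic} is the cleaner tool. In (iii), an open set of positive finite $\mu$-measure in $Y_i$ contains a patch-cylinder $C_P+\Theta$, not a tile-cylinder $C_A+\Theta$; passing from $C_P$ to the larger $C_A\supseteq C_P$ can destroy finiteness, and Lemma \ref{lem-maxim} does not remedy this. The paper instead observes that, since we are on $Y_i$, such $P$ contains a tile of type $\A_i$, decomposes $C_P$ via the nested coverings of Lemma \ref{partition} into sets $\om^n(C_A)-\bv$ with $A\in\A_i$, selects one with positive finite $\mu^T$-measure, and uses the recursion of Lemma \ref{coreN} to deduce $0<\mu^T(C_A)<\infty$, reducing to (ii).
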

\begin{proof}
(i)
The uniqueness of $\mu_i^T$ follows from the fact that if $\nu^T$
is another transverse measure satisfying
$\nu^T|_{\F_i}=\mu_i^T|_{\F_i}$ then
$\nu^T|_{\B(\Gamma_i)}=\mu^T_i|_{\B(\Gamma_i)}$. Let $\mu_i$ be
the invariant measure of $(X_{\A,\omega},\RR^d)$ associated to
$\mu_i^T$. It is $\sig$-finite and invariant by Theorem~\ref{correspondence}. It remains to verify that it is ergodic.

Let $U\in \B(X_{\A,\omega})$ be an invariant set, that is,
$U-\bv=U$ for every $\bv\in\RR^d$.  Since the set $U$ is invariant
the measure $\mu_{U} =\mu_i|_{U}$ is invariant. Let $\mu_{U}^T$ be
the transverse measure on $\B(\Gamma)$ associated to  $\mu_{U}$.
By  Lemma  \ref{invariant-transversal}, we get
$$
\mu_{U}^T(C)=\mu_i^T(C\cap U) \mbox{ for every } C\in\B(\Gamma).
$$
The measure $\mu_{U}^T$ verifies $\mu_{U}^T\leq \mu_i^{T}$,  which
ensures that $\mu_{U}^T$ is supported on $\Gamma_i$, and that
$\mu_U(C_B)<\infty$ for every $B\in \J_i^c$. In particular,
$\mu_U^T(C_B)=0$ for every $B\in (\J_i\cup \I_i)^c.$

{\it Case 1:} there exists $A$ in the class  $\A_i$ such that
$\mu_{U}^T(C_A)>0$. Then $\mu_{U}^T(C_B)>0$ for every $B$ in the
class $\A_i$. Thus Lemma \ref{help-lemma} implies that
$\mu_U^T(C_B)=\infty$ for every $B\in \J_i$, and  since
$\mu_U^T(C_B)=0$ for every $B\in (\J_i\cup \I_i)^c,$ the vector
$(\mu_U^T(C_B))_{B\in\I_i}$ is in the core of the restriction of
$M$ to $\I_i$. This implies that $(\mu^T_{U}(C_B))_{B\in\A}=\alpha
\by_i$, for some $0<\alpha\leq 1$. This vector determines
$\mu_U^T(\omega^n(C_B)-\bv)$, for every $B\in\J_i^c$, $\bv\in
J_B^{(n)}$ and $n\geq 0$. Then
$\alpha\mu^T_i|_{\F_i}=\mu^T_{U}|_{\F_i}$, and since $\mu^T_{U}$
is supported on $\Gamma_i$, we get that $\mu^T_{U}=\alpha\mu^T_i$.
This implies that $\mu^T_i(U^c\cap \Gamma)=0$, because of
$\mu^T_U(U^c\cap \Gamma)=0$. This shows that $\alpha=1$ and then
$\mu_U^T=\mu_i^T$. From Theorem \ref{correspondence} we obtain
that $\mu_U=\mu_i$, which implies that $\mu_i(U^c)=0$.

{\it Case 2:} If $\mu^T_{U}(C_A)=0$ for every $A$ in the class
$\A_i$ then  $\mu_{U^c}^T(C_A)=\mu_i^{T}(C_A)$ for every $A$ in
the class $\A_i$. As in the previous case, but replacing $U$ by
$U^c$, we get  that $\mu^T_{U^c}=\mu_i^T$ and $\mu_i(U)=0$. This
shows that $\mu_i$ is ergodic.

\medskip

(ii)
Let $\mu$ be  a $\sig$-finite ergodic measure such that
$0<\mu^T(C_A)<\infty$ for some $A\in\A'$. Let $i=\max\{1\leq j\leq
l: 0<\mu^T(C_A)<\infty, A\in\A_j\}.$  By Lemma \ref{coreN}, the
vector $(\mu^T(C_A))_{A\in\A_i}$ is in $core(M_i)$. Then there
exists $\lambda>0$ such that for every $A\in\A_i$, $\bv\in
J_A^{(n)}$ and $n\geq 0$,
$$
\mu^T(\omega^n(C_A)+\bv)=\lambda\mu_i^T(\omega^n(C_A)+\bv).
$$
A standard argument shows that this equation implies
that $\mu_T$ and $\lambda\mu_i^T$ coincide on each Borel set
contained in $\bigcup_{A\in\A_i}C_A$. From Lemma \ref{ergodic} we
obtain that $\mu=\lambda\mu_i$.

\medskip

(iii)
Let $\mu$ be an ergodic $\sig$-finite measure on a maximal component $Y_i$, such that $\mu(U)$ is positive and finite for some open set $U\seq Y_i$. Let $\mu^T$ be the corresponding transverse measure; then $\mu^T(U\cap \Gamma)$ is positive and finite. The topology on $Y_i\cap \Gamma$ is generated by the sets $C_P$ for $P \in \Lambda_X$ (see Section 2.7), with $P$ containing at least one
tile from $\A_i$. Decomposing $C_P$ as a disjoint union, we can find $A\in \A_i$ and $n\in \NN$ such that $\mu^T(\om^n(C_A)-\bv)$ is positive and finite for some $\bv\in \R^d$ such that $\om^n(C_A)-\bv\in \Gamma$.
Then Lemma~\ref{coreN} implies that $\mu^T(C_A)$ is positive and finite, and we can conclude by applying part (ii).
\end{proof}

\medskip \noindent {\em Proof of Theorem B.} This follows from Theorem~\ref{final-theorem-infinite}. We only need to note that every point in a maximal component $Y_i$ has a neighborhood of the form
$(\om^n(C_A)-\bv)+B_\varepsilon(\b0)$, with $A\in \A_i$, which has positive and finite $\mu_i$ measure. \qed

\section{Examples and concluding remarks}

\begin{example}\label{nonperiodic-substitution}{\em
All the tiles have the unit square as its support and are distinguished only by the labels. Let $\Ak = \left\{\begin{tabular}{|c|} \hline 0 \\ \hline \end{tabular}\,,
\begin{tabular}{|c|} \hline 1 \\ \hline \end{tabular}\,,\begin{tabular}{|c|} \hline 2 \\ \hline \end{tabular}\right\}$;
$$
\begin{tabular}{|c|} \hline 0 \\ \hline \end{tabular}\ \to\  \begin{tabular}{|c|c|c|} \hline 0 & 0 & 1 \\ \hline
                                                                                             0 & 0 & 1 \\ \hline
                                                                             1 & 1 & 1 \\ \hline
                    \end{tabular}\,, \ \ \ \ \ \
  \begin{tabular}{|c|} \hline 1 \\ \hline \end{tabular}\  \to\  \begin{tabular}{|c|c|c|} \hline 1 & 1 & 0 \\ \hline
                                                                                               1 & 1 & 0 \\ \hline
                                                                     0 & 0 & 0 \\ \hline \end{tabular}\,,  \ \ \ \ \ \
\begin{tabular}{|c|} \hline 2 \\ \hline \end{tabular}\  \to\  \begin{tabular}{|c|c|c|} \hline 1 & 1 & 1 \\ \hline
                                                                                              1 & 2 & 2 \\ \hline
                                                      1 & 2 & 2 \\ \hline \end{tabular}
$$
The substitution matrix is $M = \left(\begin{array}{ccc} 4 & 5 & 0 \\ 5 & 4 & 5 \\ 0 & 0 & 4 \end{array}\right)$. The tiling dynamical system has
one minimal and one maximal component. It is easy to check that it is non-periodic.
The  restriction of the
substitution matrix to the minimal components is
$
\left(%
\begin{array}{cc}
  4 & 5 \\
  5 & 4 \\
\end{array}%
\right).
$
Then the unique   probability measure $\mu$ is given by
$\mu^T(C_2)=0$ and
$$
\mu^T(\omega^n(C_0)-\bv)=\mu^T(\omega^n(C_1)-\bv)=\frac{1}{2\cdot
9^n}, \mbox{ for every } \bv\in J_0^ {(n)}=J_1^{(n)} \mbox{
and } n\geq 0.
$$
This substitution satisfies (\ref{hyp-hip}); then applying Theorem
\ref{final-theorem-infinite} we get that every $\sig$-finite
ergodic measure $\mu$ such that $0<\mu^T(C_{2})<\infty$, is a constant
multiple of the unique measure $\mu_2$ such that $\mu_2^T$ is
supported on $\bigcup_{n\geq 0}\bigcup_{\bv\in
J_2^{(n)}}(\omega^n(C_2)-\bv)$ and that verifies
$$
\mu_2^{T}(C_2)=1 \mbox{ and } \mu_2^T(C_0)=\mu_2^T(C_1)=\infty.
$$
Since the restriction of the substitution matrix to $\A'$ is equal
to  $[4]$, we get
$$
4^{-n}\mu_2^T(C_2)=\mu_2^T(\omega^n(C_2)-\bv),
$$
for every $\bv\in J_2^{(n)}$ and $n\geq 0$.}
\end{example}

\begin{example}[Sierpi\'nski carpet]\label{Cantor-substitution}{\em
All the tiles have the unit square as its support and are distinguished only by the labels. Let $\Ak = \left\{\begin{tabular}{|c|} \hline 0 \\ \hline \end{tabular}\,,
\begin{tabular}{|c|} \hline 1 \\ \hline \end{tabular}\right\}$;
$$
\begin{tabular}{|c|} \hline 0 \\ \hline \end{tabular}\ \to\  \begin{tabular}{|c|c|c|} \hline 0 & 0 & 0 \\ \hline
                                                                                             0 & 0 & 0 \\ \hline
                                                                                             0 & 0 & 0 \\ \hline
                                                                                                                                  \end{tabular}\,, \ \ \ \ \ \
                    \begin{tabular}{|c|} \hline 1 \\ \hline \end{tabular}\  \to\  \begin{tabular}{|c|c|c|} \hline 1 & 1 & 1 \\ \hline
                                                                              1 & 0 & 1 \\ \hline
                                                                                                                              1 & 1 & 1 \\ \hline \end{tabular}
$$
The substitution matrix is $M = \left(\begin{array}{cc} 9 & 1 \\ 0 & 8 \end{array} \right)$.
Here the minimal component is periodic; it consists of periodic tilings with only one tile type, labeled $0$. However, the ``non-periodic border''
condition holds (see Section 4 for details).
This example, which we call the ``integer Sierpi\'nski carpet'' tiling, is a generalization of the 1-dimensional symbolic substitution $0\to 000,\ 1\to 101$, which
was analyzed by A. Fisher \cite{Fi1}.
The intersection of the
transversal with the unique minimal component contains only one
element $\{\T_{0}\}$. Then the transverse measure associated to
the unique invariant probability measure $\mu_0$ supported on the
minimal component is the atomic measure $\mu_0^T(\{\T_0\})=1$. The
measure $\mu_0$ corresponds to the Lebesgue measure on the torus
(the minimal component is conjugate to the $\RR^2$-translations on
the torus).
This substitution satisfies (\ref{hyp-hip}), then applying Theorem
\ref{final-theorem-infinite} we get that every $\sig$-finite
ergodic measure $\mu$ such that $0<\mu^T(C_{1})<\infty$, is a constant
multiple of the unique measure $\mu_1$ such that $\mu_1^T$ is
supported on $\bigcup_{n\geq 0}\bigcup_{\bv\in
J_1^{(n)}}(\omega^n(C_1)-\bv)$ and verifies
$$
\mu_1^{T}(C_1)=1 \mbox{ and } \mu_1^T(C_0)=\infty.
$$
Since the restriction of the substitution matrix to $\A'$ is equal
to $[8]$, we get
$$
8^{-n}\mu_1^T(C_1)=\mu_1^T(\omega^n(C_1)-\bv),
$$
for every $\bv\in J_1^{(n)}$ and $n\geq 0$.}
\end{example}

\begin{example}[Sierpi\'nski gasket]{\em
Consider $\A=\{\vartriangle, \triangledown,
\blacktriangle\}$ and the substitution given below.
$$
\blacktriangle\ \to\ \begin{array}{ccc} & \!\!\!\!\!\!\!\!\blacktriangle & \\[-9pt] \blacktriangle & \!\!\!\!\!\!\!\!\triangledown & \!\!\!\!\!\!\!\!\!\!\!\!\blacktriangle \end{array},\ \ \
\vartriangle\ \to\ \begin{array}{ccc} & \!\!\!\!\!\!\!\!\vartriangle & \\[-9pt] \vartriangle & \!\!\!\!\!\!\!\!\triangledown & \!\!\!\!\!\!\!\!\!\!\!\!\vartriangle \end{array},\ \ \
\triangledown\ \to \begin{array}{ccc} \triangledown & \!\!\!\!\!\!\!\!\vartriangle & \!\!\!\!\!\!\!\!\!\!\!\!\triangledown \\[-9pt] & \!\!\!\!\!\!\!\!\triangledown & \end{array}
$$
The
substitution matrix of $\omega$ is
$
M=\left(%
\begin{array}{ccc}
  3 & 1 & 0 \\
  1 & 3 & 1\\
  0 & 0 & 3
\end{array}%
\right).
$
The system $X_{\A,\omega}$ has a unique minimal component, and the
submatrix of $M$ associated to the unique minimal component is
$\left(%
\begin{array}{cc}
  3 & 1  \\
  1 & 3 \\
\end{array}%
\right).$ This implies that the unique probability transversal
measure is given by
$$
\mu^T(\omega^n(C_{\vartriangle})-\bv)=2^{-2n-1}
\mbox{ and }
\mu^T(\omega^n(C_{\triangledown})-\bu)=2^{-2n-1},
$$
where  $\bv\in J_{\vartriangle}^{(n)}$, $\bu\in
J_{\triangledown}^{(n)}$ and $n\geq 0$.

The tile substitution satisfies the non-periodic border condition, so it is partially recognizable (this is also easy to verify directly) and satisfies (\ref{hyp-hip}).
By Theorem B, there is a unique, up to scaling, ergodic $\sig$-finite measure $\mu_\blacktriangle$, for which every point containing a tile $\blacktriangle$ has a neighborhood with positive and finite measure.
For this measure we have
$$
3^{-n}\mu_\blacktriangle^T(C_{\blacktriangle})=\mu_\blacktriangle^T(\omega^n(C_{\blacktriangle})-\bv),
$$
for every $\bv\in J_{\blacktriangle}^{(n)}$ and $n\geq 0$, and
$\mu_\blacktriangle^T(C_{\vartriangle})=\mu_\blacktriangle^T(C_{\triangledown})=\infty$.}
\end{example}

\begin{example} \label{ex-saf} {\em
Let $\Ak = \left\{\begin{tabular}{|c|} \hline 0 \\ \hline \end{tabular},
\begin{tabular}{|c|} \hline 1 \\ \hline \end{tabular}\right\}$;
$$
\begin{tabular}{|c|} \hline 0 \\ \hline \end{tabular}\ \to\  \begin{tabular}{|c|c|c|} \hline 0 & 0 & 0 \\ \hline
                                      0 & 0 & 0 \\ \hline
\end{tabular}\,
,\ \ \ \ \ \
\begin{tabular}{|c|} \hline 1 \\ \hline \end{tabular}\  \to\  \begin{tabular}{|c|c|c|} \hline 0 & 1 & 0 \\ \hline
                                                                                1 & 0 & 1 \\ \hline
                                     \end{tabular}
$$
This substitution is {\em self-affine}, rather than self-similar, and generates the integer analog of the ``Bedford-McMullen carpet'', see
\cite{Bedf,McM}. Note that here the non-periodic border condition does not hold; however, partial recognizability (i.e.\ every tiling containing
a prototile labeled by 1 has a unique pre-image under the substitution) is easy to verify directly. Thus Theorem~\ref{final-theorem-infinite} applies,
and we get a conclusion similar to the above examples.
}
\end{example}

\subsection{Concluding remarks.}
\begin{enumerate}
\item[1.] One can draw an (admittedly vague) analogy between substitution tiling flows and horocycle flows on manifolds of negative curvature. Moreover, the dynamics of the substitution $\omega$ is analogous to the
geodesic flow. The compact manifold case corresponds to the case of minimal/primitive substitution systems, for which the horocycle flow, respectively, the tiling flow, is uniquely ergodic. The
non-primitive substitutions then loosely correspond to the non-compact (but, perhaps, geometrically finite) case, where often the only ``natural'' invariant measure is $\sig$-finite, see e.g.\ \cite{Burger}.

\item[2.] A. Fisher \cite{Fi1} obtained a ``second-order ergodic theorem'' for his ``integer Cantor set'' substitution system. Is it possible to obtain similar results for our systems? We believe that it is, at least for examples such as the ``Sierpi\'nski gasket and carpet'' tilings. What about more general non-primitive substitutions? One should probably stick to the self-similar case, or else use averaging over the F{\o}lner sets $\varphi^n (B_1(\b0))$. In general, there will be a {\em graph-directed Iterated Function System} associated to the tiling system. Objects analogous to the tilings, such as the ``Sierpi\'nski gasket and carpet'' tilings, were considered by Strichartz, in the framework of {\em Reverse Iterated Function Systems} \cite{Str}.

\item[3.] All our examples have tiles of very simple geometry, but there exist tile substitutions  with very complicated tiles: e.g.\ tiles with a fractal boundary, disconnected tiles, connected tiles with disconnected interior, etc. This is known for primitive substitution tilings, see e.g. \cite{Vince}, and it is easy to construct a non-primitive substitution tiling using the same shapes as for a primitive one. For example, suppose we have a primitive substitution tiling $\om: \A\to \A$, with $\om(A) = \bigcup_{B\in \A} (B + D_B)$ where $D_B$ is a finite set for every prototile $B$.
    Let $\wt{\A} = \A \times \{0,1\}$, in other words, we consider ``old'' prototiles with an additional label. We assume that $\supp(A,j) = \supp(A)$ for $j=1,2$. Consider a non-trivial partition $\A = \A_1 \cup \A_2$ and
    define the substitution $\wt{\omega}:\,\wt{\A} \to \wt{\A}$ by
    $$\wt{\om}(A,0) = \bigcup_{B\in \A} ((B,0) + D_B),\ \ \ \wt{\om}(A,1) = \bigcup_{B\in \A_1} ((B,0) + D_B) \cup \bigcup_{B\in \A_2} ((B,1) + D_B).
    $$
    This is a non-primitive tile substitution, and one can refine this construction to satisfy any additional properties, such as admissibility, non-periodic border, etc.
\end{enumerate}
\section{Appendix: Invariant measures versus transverse
measures.}\label{Apendix A}
We use the notation and terminology from Section~\ref{transversal}.
Recall that
a {\it transverse measure } on $\B(\Gamma)$   is a measure $\mu:
\B(\Gamma)\to \overline{\RR}_+$ such that $\mu(A)=\mu(A-\bv)$,
for every  $A\subseteq \B(\Gamma)$ and $\bv\in\RR^d$ for which
$A-\bv\subseteq \Gamma$ (see \cite[Definition~5.1]{BBG} for a
definition of transverse measure in the context of laminations).
Recall that $\eta>0$ is such that the closure of $B_\eta(\b0)$ is contained in the interior of every prototile.
Observe that
\be \label{eq-inter}
\T\in \Gam,\ \T+\bv\in \Gam,\ \bv\in B_{2\eta}(\b0)\ \Rightarrow\ \bv=\b0.
\ee
We write $X:= \Xa$ to simplify the notation.
\subsection{From invariant measures to transverse measures.}
\begin{lemma}
\label{invariant-transversal} Let $\mu$ be an invariant measure of
$(X,\RR^d)$. For every $U\in\B(\Gamma)$, there exists $\mu^T(U)\in
\overline{\RR}_+$ such that for every open set $\Theta$ contained
in the ball $B_{\eta}(\b0)$, we have
$$
\frac{\mu(U+\Theta)}{\vol(\Theta)}=\mu^T(U).
$$
\end{lemma}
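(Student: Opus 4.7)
The plan is to realize $\mu^T(U)$ as the constant of proportionality in the identity $\mu(U+\Theta)=c\cdot\vol(\Theta)$, viewing $U+\Theta$ as a ``tube'' with base $U$ and fiber $\Theta$, and to extract this constant from the translation invariance of $\mu$. First I would promote the elementary fact (\ref{eq-inter}) into a Fubini-type disjointness statement for tubes: for any Borel $U\seq \Gam$ and any two disjoint Borel subsets $\Theta_1,\Theta_2$ of $B_\eta(\b0)$, the sets $U+\Theta_1$ and $U+\Theta_2$ are disjoint in $X$, since if $\T_1-\bv_1=\T_2-\bv_2$ with $\T_i\in U$ and $\bv_i\in\Theta_i$, then $\T_1-\T_2=\bv_1-\bv_2\in B_{2\eta}(\b0)$, whence $\T_1=\T_2$ by (\ref{eq-inter}) and then $\bv_1=\bv_2\in\Theta_1\cap\Theta_2=\es$. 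Extending this to countable disjoint unions and using the countable additivity of $\mu$, the set function
$$f_U(\Theta):=\mu(U+\Theta)$$
is a (possibly infinite) Borel measure on $B_\eta(\b0)$.

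Next I would exploit the $\RR^d$-invariance of $\mu$. Whenever $\Theta$ and $\Theta+\bv$ both lie in $B_\eta(\b0)$, the identity $U+(\Theta+\bv)=(U+\Theta)+\bv$ gives
$$f_U(\Theta+\bv)=\mu((U+\Theta)+\bv)=\mu(U+\Theta)=f_U(\Theta),$$
so $f_U$ is invariant under every translation keeping its argument inside $B_\eta(\b0)$. A standard argument---partition $B_\eta(\b0)$ by a countable grid of small open cubes, note that any two cubes of the same size sitting inside $B_\eta(\b0)$ differ by an admissible translation, then refine the grid to compare cubes of different sizes via finite additivity---shows that any such locally translation-invariant Borel measure is a constant multiple (possibly $\infty$) of Lebesgue measure. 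Denoting this constant by $\mu^T(U)$ yields the claimed formula $\mu(U+\Theta)/\vol(\Theta)=\mu^T(U)$ for every open $\Theta\seq B_\eta(\b0)$.

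The hard part is this last step, since $f_U$ is not invariant under the full $\RR^d$-action but only under a ``local pseudogroup'' of translations preserving $B_\eta(\b0)$, so the uniqueness of Haar measure cannot be invoked as a black box; the countable-grid-of-cubes argument has to be carried out by hand. Everything else---Borel measurability of $U+\Theta$ (already recorded in the remark preceding the lemma), countable additivity of $f_U$ on disjoint unions, and the passage from open to general Borel fibers---is routine once the tube-disjointness lemma is established.
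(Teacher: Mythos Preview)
Your proposal is correct and follows essentially the same route as the paper: define $E\mapsto \mu(U+E)$ as a Borel measure on $B_\eta(\b0)$ via the disjointness property (\ref{eq-inter}), observe its local translation invariance from the invariance of $\mu$, and conclude it is a multiple of Lebesgue measure. The only notable difference is in the last step: rather than carrying out the grid-of-cubes argument by hand, the paper first disposes of the cases where $\mu_U$ is zero or infinite on some open set, and in the remaining (positive, finite) case restricts $\mu_U$ to a small cube and extends it \emph{periodically} to all of $\R^d$; the extension is then a genuinely translation-invariant, locally finite Borel measure on $\R^d$, so the standard uniqueness of Haar measure applies directly. This sidesteps the need to argue about a ``local pseudogroup'' of translations, which you correctly flagged as the part requiring care.
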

\begin{proof}
Fix $U\seq \B(\Gam)$.
Observe that $\mu_U:\, E\mapsto \mu(U+E)$ is a Borel measure on the ball $\Bl$. This follows from (\ref{eq-inter}), which implies
$$
E_1, E_2 \seq \Bl,\ E_1\cap E_2 = \es\ \Rightarrow\ (U+E_1) \cap (U+E_2) = \es.
$$
Moreover, by the invariance of $\mu$ we have
$$
E \seq \Bl,\ E-\bv \seq \Bl\ \Rightarrow \ \mu(E-\bv) = \mu(E).
$$
It is easy to see that if for some open $\Th_1\seq \Bl$ we have $\mu(U+\Th_1)=0$, then  $\mu(U+\Th)=0$ for all open subsets
of $\Bl$ and we can set $\mu^T(U)=0$. Similarly,  if for some open $\Th_1\seq \Bl$ we have $\mu(U+\Th_1)=\infty$, then  $\mu(U+\Th)=\infty$ for all open subsets of $\Bl$  and we can set $\mu^T(U)=\infty$.
So we can suppose that $\mu_U$ is positive and finite on open subsets of $\Bl$. Consider the restriction of $\mu_U$ to a cube
$\prod_{i=1}^d [a_i, a_i+h)$ contained in $\Bl$ and extend it to $\R^d$ by periodicity, i.e.\ let
$$
\nu_U(E) = \sum_{\bx \in \ZZ^d} \mu\left(U + \left(E \cap \left(\prod_{i=1}^d [a_i, a_i+h) + h\bx\right)\right)\right).
$$
This is a Borel measure on $\R^d$ which is translation-invariant and positive and finite on open subsets.  It follows that $\nu_U(E) = c_U \vol(E)$
and we can set $\mu^T(U) = c_U$.
\end{proof}
\begin{lemma}
\label{transversal-0} Let $\mu^T:\B(\Gamma)\to \overline{\RR}_+$
be the function obtained in Lemma \ref{invariant-transversal}.
Then $\mu^T$ is a transverse measure.
\end{lemma}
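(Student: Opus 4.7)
The plan is to check the two defining properties of a transverse measure in turn: that $\mu^T$ is a countably additive set function on $\B(\Gamma)$, and that it is invariant under translations that keep a Borel set inside $\Gamma$. Both reductions will follow by a dimension-stripping argument: for any $U \in \B(\Gamma)$ and any open $\Theta \seq B_\eta(\b0)$, the defining relation
\[
\mu(U+\Theta) \;=\; \mu^T(U)\cdot \vol(\Theta)
\]
from Lemma~\ref{invariant-transversal} lets us transfer identities for $\mu$ on $X$ to identities for $\mu^T$ on $\Gamma$ by choosing a suitable $\Theta$ and dividing by $\vol(\Theta)$.

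First I would verify countable additivity. Let $\{U_n\}_{n\ge 1}$ be pairwise disjoint sets in $\B(\Gamma)$ and fix any open $\Theta \seq B_\eta(\b0)$. The key observation is that the thickened sets $\{U_n+\Theta\}$ are still pairwise disjoint in $X$: if $\Tk+\bt_n = \Sk+\bt_m$ for some $\Tk\in U_n$, $\Sk\in U_m$, and $\bt_n,\bt_m\in\Theta$, then $\Tk - \Sk = \bt_m-\bt_n\in B_{2\eta}(\b0)$, and since $\Tk,\Sk\in\Gamma$, property (\ref{eq-inter}) forces $\bt_n=\bt_m$ and hence $\Tk=\Sk$, so $n=m$. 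Therefore
\[
\mu^T\!\Bigl(\bigcup_n U_n\Bigr)\vol(\Theta) \;=\; \mu\!\Bigl(\bigcup_n (U_n+\Theta)\Bigr) \;=\; \sum_n \mu(U_n+\Theta) \;=\; \Bigl(\sum_n \mu^T(U_n)\Bigr)\vol(\Theta),
\]
and dividing by $\vol(\Theta)>0$ yields countable additivity. (The cases where the right-hand side is $0$ or $\infty$ are handled by the same dichotomy used in the proof of Lemma~\ref{invariant-transversal}.)

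Next I would verify the transverse invariance. Let $A\in\B(\Gamma)$ and $\bv\in\RR^d$ satisfy $A-\bv\seq\Gamma$. Pick any open $\Theta\seq B_\eta(\b0)$. Then both $A$ and $A-\bv$ lie in the transversal, so Lemma~\ref{invariant-transversal} applies to each, giving
\[
\mu^T(A-\bv)\cdot \vol(\Theta) \;=\; \mu\bigl((A-\bv)+\Theta\bigr) \;=\; \mu\bigl((A+\Theta)-\bv\bigr).
\]
Since $\mu$ is $\RR^d$-invariant, the last quantity equals $\mu(A+\Theta) = \mu^T(A)\vol(\Theta)$. Dividing by $\vol(\Theta)$ gives $\mu^T(A-\bv)=\mu^T(A)$, as required.

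I do not expect any real obstacle here: the main content of the correspondence (showing $\mu^T$ is well defined and finite-valued in a consistent way) was already achieved in Lemma~\ref{invariant-transversal}, and what remains is purely formal once one checks the disjointness of thickened transversal sets via (\ref{eq-inter}). The only subtlety worth being careful about is the $0$/$\infty$ dichotomy inherited from Lemma~\ref{invariant-transversal}, which simply means the additivity and invariance identities have to be interpreted in $\overline{\RR}_+$; this causes no issue since both sides scale by the same factor $\vol(\Theta)$.
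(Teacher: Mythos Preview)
Your proposal is correct and follows essentially the same route as the paper's proof: thicken the transversal sets by an open $\Theta\subseteq B_\eta(\b0)$, use countable additivity and $\R^d$-invariance of $\mu$, then divide by $\vol(\Theta)$. The only cosmetic issue is the line ``$\Tk-\Sk=\bt_m-\bt_n$'', which abuses notation since tilings are not vectors; what you mean (and what makes the appeal to~(\ref{eq-inter}) work) is that $\Tk+\bt_n=\Sk+\bt_m$ gives $\Sk+(\bt_m-\bt_n)=\Tk\in\Gamma$ with $\Sk\in\Gamma$ and $\bt_m-\bt_n\in B_{2\eta}(\b0)$.
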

\begin{proof}
It is clear that $\mu^T$ is a measure. Indeed, if
$(U_n)_{n\in\NN}$ is a collection of disjoint sets in
$\B(\Gamma)$, and $\varepsilon>0$ is small enough, then the sets
$(U_n+B_{\varepsilon}(\b0))_{n\in\NN}$ are disjoint. It follows from
the definition of $\mu^T$ that
$\mu^T(\bigcup_{n\in\NN}U_n)=\sum_{n\in\NN}\mu^T(U_n)$. If
$U\in\B(\Gamma)$ and $\bv\in\RR^d$ is such that
$U-\bv\subseteq \Gamma$, then for $\varepsilon>0$  we have
$\mu(U-\bv+B_{\varepsilon}(\b0))= \mu(U-B_{\varepsilon}(\b0))$,
which implies that $\mu^T$ is transverse.
\end{proof}
\begin{definition}
Let $\mu$ be an invariant measure of $(X,\RR^d)$. We denote by $\mu^T$
the transverse measure associated to $\mu$.
\end{definition}

\subsection{From transverse measures to invariant
measures}\label{transversal-to-invariant}

Let $\nu$ be a $\sigma$-finite transverse measure on $\Gamma$. We
write $\lambda_d$ for the Lebesgue measure on $\RR^d$, and
$\nu\otimes\lambda_d$ for the product measure on $\Gamma\times \RR^d$.

For every $\bw, \bv\in \RR^d$, we define
$\psi^{(\bw,\bv)}:X\times \RR^d\to X\times \RR^d$ by
$$
\psi^{(\bw,\bv)}(\T,\bu)=(\T-\bw,\bu-\bv),
\mbox{ for every } \T\in X \mbox{ and } \bu\in\RR^d.
$$
This function is a homeomorphism (with respect to the product
topology).
\begin{lemma}\label{transverse-invariant-invariance}
Let $U$ be a Borel set in $\Gamma\times \RR^d$. Then
$$
\nu\otimes\lambda_d(\psi^{(\bw,\bv)}(U))=\nu\otimes\lambda_d(U),
$$
for every $(\bw,\bv)\in\RR^{2d}$ such that
$\psi^{(\bw,\bv)}(U)\subseteq \Gamma\times \RR^d$.
\end{lemma}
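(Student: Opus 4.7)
The plan is to reduce the invariance of $\nu \otimes \lambda_d$ to a change-of-variable identity on $\Gamma$ itself, using Fubini's theorem on one side and the transversality of $\nu$ on the other. First, since $\nu$ is $\sigma$-finite, for any Borel $W\subseteq \Gamma\times\RR^d$ we have
$$
(\nu\otimes\lambda_d)(W) \;=\; \int_{\Gamma}\lambda_d(W_{\T})\,d\nu(\T),\qquad W_{\T}:=\{\bu\in\RR^d:(\T,\bu)\in W\}.
$$
Set $V:=\psi^{(\bw,\bv)}(U)$. Because $\psi^{(\bw,\bv)}$ is a homeomorphism of $X\times\RR^d$ and the hypothesis places $V$ inside $\Gamma\times\RR^d$, the set $V$ is Borel. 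A direct slice computation gives $V_{\sig}=U_{\sig+\bw}-\bv$ when $\sig+\bw\in\Gamma$ and $V_{\sig}=\es$ otherwise; together with the translation-invariance of $\lambda_d$ this yields $\lambda_d(V_{\sig})=\lambda_d(U_{\sig+\bw})$ for every $\sig\in\Gamma$ (interpreting $U_{\sig+\bw}=\es$ when $\sig+\bw\notin\Gamma$).

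Next, introduce $\Gamma_{\bw}:=\Gamma\cap(\Gamma-\bw)$, which is closed (hence Borel) in $\Gamma$ by compactness of $\Gamma$ and continuity of the translation action, together with the Borel bijection $\phi_{\bw}:\Gamma_{\bw}\to\Gamma_{-\bw}$, $\sig\mapsto\sig+\bw$. The definition of transverse measure is exactly the assertion $(\phi_{\bw})_{*}(\nu|_{\Gamma_{\bw}})=\nu|_{\Gamma_{-\bw}}$: for any Borel $E\subseteq\Gamma_{-\bw}$, the set $E-\bw\subseteq\Gamma_{\bw}$ is Borel and $\nu(E)=\nu(E-\bw)$. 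By the standard pushforward change-of-variable formula, this extends to
$$
\int_{\Gamma_{-\bw}}g(\T)\,d\nu(\T)\;=\;\int_{\Gamma_{\bw}}g(\sig+\bw)\,d\nu(\sig)
$$
for every nonnegative Borel function $g$ on $\Gamma_{-\bw}$. The hypothesis $V\seq\Gamma\times\RR^d$ means that $(\T,\bu)\in U$ forces $\T-\bw\in\Gamma$, i.e.\ $\T\in\Gamma_{-\bw}$; hence $\T\mapsto\lambda_d(U_{\T})$ vanishes off $\Gamma_{-\bw}$ and $\sig\mapsto\lambda_d(U_{\sig+\bw})$ vanishes off $\Gamma_{\bw}$. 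Chaining everything,
$$
(\nu\otimes\lambda_d)(V)=\int_{\Gamma_{\bw}}\lambda_d(U_{\sig+\bw})\,d\nu(\sig)=\int_{\Gamma_{-\bw}}\lambda_d(U_{\T})\,d\nu(\T)=(\nu\otimes\lambda_d)(U).
$$

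The main subtlety is bookkeeping around the ``exceptional'' sets $\Gamma_{\pm\bw}$, which under FPC are nonempty for only countably many $\bw$ (cf.\ the set $\Lambda$ in the proof of Lemma~\ref{ergodic}) and may even be discrete; one must verify that the pointwise definition of transverse measure genuinely delivers the pushforward identity $(\phi_{\bw})_{*}(\nu|_{\Gamma_{\bw}})=\nu|_{\Gamma_{-\bw}}$, and that Fubini applies despite this degeneracy. Once $\Gamma_{\bw}$ is identified as a Borel set on which $\phi_{\bw}$ is a Borel isomorphism intertwining the restrictions of $\nu$, the change-of-variable formula follows by the usual monotone-class/simple-function approximation, and $\sigma$-finiteness of $\nu$ guarantees that no measurability or interchange of integration issues arise.
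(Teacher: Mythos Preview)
Your proof is correct and uses the same ingredients as the paper---Fubini, translation-invariance of $\lambda_d$, and the transversality of $\nu$---but you apply Fubini in the opposite order. The paper slices $U$ along the $\RR^d$-coordinate, writing
\[
\nu\otimes\lambda_d(\psi^{(\bw,\bv)}(U))=\int_{\RR^d}\nu\bigl((\psi^{(\bw,\bv)}(U))^{1}(\bx)\bigr)\,d\lambda_d(\bx),
\]
where $(W)^1(\bx)=\{\T:(\T,\bx)\in W\}$. Since $(\psi^{(\bw,\bv)}(U))^{1}(\bx)=U^1(\bx+\bv)-\bw$, the definition of transverse measure applies \emph{pointwise} to give $\nu(U^1(\bx+\bv)-\bw)=\nu(U^1(\bx+\bv))$, and then Lebesgue invariance in the outer integral finishes. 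By contrast, you slice along $\Gamma$ first; this forces you to repackage transversality as a pushforward identity $(\phi_{\bw})_*(\nu|_{\Gamma_{\bw}})=\nu|_{\Gamma_{-\bw}}$ and invoke a change-of-variable formula for nonnegative Borel functions. That is perfectly valid, but it is extra bookkeeping: the paper's order of integration is chosen precisely so that the hypothesis ``$\nu(A)=\nu(A-\bw)$ whenever $A-\bw\subseteq\Gamma$'' can be used verbatim, with no need to introduce $\Gamma_{\pm\bw}$ or the monotone-class argument you flag as a subtlety.
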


\begin{proof}
For every  $U$ in $X\times \RR^d$ and $\bx\in\RR^d$  we set
$$
(U)^{1}(\bx)=\{\T\in X: (\T,\bx)\in U\}.
$$
Let $U$ be a Borel set in $\Gamma\times \RR^d$ and let
$(\bw,\bv)\in\RR^d$ be such that
$\psi^{(\bw,\bv)}(U)\subseteq \Gamma\times\RR^d$. We have
\begin{eqnarray*}
\nu\otimes\lambda_d(\psi^{(\bw,\bv)}(U))& =&
\int\nu((\psi^{(\bw,\bv)}(U))^{1}(\bx))\,d\lambda_d(\bx)\\
&=&\int\nu(U^1(\bx+\bv)-\bw)\,d\lambda_d(\bx)\\
\end{eqnarray*}
Since $\nu$ is transverse and $U^1(\by)-\bw\subseteq
\Gamma$, for every $\by\in\RR^d$, we get
$$
\int\nu(U^1(\bx+\bv)-\bw)\,d\lambda_d(bx) =
\int\nu(U^1(\bx+\bv))\,d\lambda_d(\bx).
$$
The invariance under translations of the Lebesgue measure implies
that
$$
\int\nu(U^1(\bx+\bv))\,d\lambda_d(\bx)=\int\nu(U^1(\bx))\,d\lambda(\bx)=\nu\otimes\lambda_d(U).
$$
\end{proof}

Since $X$ verifies FPC, $X$ is a finite union of sets
$C_P+\Theta$, with $P\in\Lambda_X$ and $\Theta$ open in $\RR^d$
with diameter smaller than $\eta$. Namely, $X=\bigcup_{i=1}^nU_i$,
where $U_i=C_{P_i}+\Theta_i$, for every $1\leq i\leq n$.

From (\ref{eq-inter}), for each $1\leq i\leq n$ the function
$h_i:U_i\to C_i\times \Theta_i$ given by
$h_i(\T+\bv)=(\T,\bv)$ is well-defined. Moreover, $h_i$ is
a homeomorphism.

For every $1\leq i\leq n$, let $\ba_i\in\RR^d$ be a vector
such that $\b0\in \Theta_i-\ba_i$ and $\Theta_i-\ba_i$ is
contained in the ball $B_{\eta}(\b0)$. Since for every $R>0$, the
set $C_{P_i}$ is a finite and disjoint union of sets $C_P$, with
$P\in\Lambda_X$ whose support contains the ball $B_R(\b0)$, we
can assume that the support of $C_{P_i}$ contains the vectors
$\ba_k-\ba_j$, for every $1\leq k,j\leq n$. This implies
that for every $1\leq i,j\leq n$, there exist $\ba_{i,j},
\bB_{i,j}\in\RR^d$ such that
\begin{equation}
\label{eq-transverse-invariant-1} h_j\circ
h_i^{-1}(\T,\bv)=(\T+\ba_{i,j},\bv+\bB_{i,j}),
\mbox{ for every } \T\in C_{P_i}, \bv\in\Theta_i.
\end{equation}

Equation (\ref{eq-transverse-invariant-1}) implies that $X$ has a
{\it $d$-lamination structure} (see \cite{BBG} for details). The
collection $\{U_i,h_i\}_{i=1}^n$ is called  an {\it atlas} of $X$.

\medskip

For every $1\leq i\leq n$ define $\mu_i:\B(U_i)\to
\overline{\RR}_+$ by $ \mu_i(U)=\nu\otimes\lambda_d(h_i(U))$. It
is clear that $\mu_i$ is a measure.

We define $\wt{U}_1=U_1$ and
$\wt{U}_i=U_i\setminus(\bigcup_{j=1}^{i-1}U_j)$, for every
$2\leq i\leq n$. The function
$\mu=\sum_{i=1}^n\mu_i|_{\wt{U}_i}$ is a measure on $\B(X)$.
Since $\nu\otimes\lambda_d$ is $\sig$-finite, $\mu$ is $\sig$-finite
too.
We will show that $\mu$ is invariant and that $\mu^T=\nu$.


\begin{lemma}\label{transverse-invariant-independence}
The measure $\mu$ does not depend on the atlas.
\end{lemma}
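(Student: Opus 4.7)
The plan is to reduce everything to the translation invariance of $\nu\otimes\lambda_d$ established in Lemma \ref{transverse-invariant-invariance}. First I would show that within a single fixed atlas $\{(U_i,h_i)\}_{i=1}^n$, the auxiliary measures $\mu_i$ and $\mu_j$ agree on Borel subsets of $U_i\cap U_j$. Indeed, by (\ref{eq-transverse-invariant-1}) the transition $h_j\circ h_i^{-1}$ coincides on $h_i(U_i\cap U_j)$ with the map $\psi^{(-\ba_{i,j},-\bB_{i,j})}$, so for any Borel $E\seq U_i\cap U_j$ one has
\[
\mu_i(E)=\nu\otimes\lambda_d(h_i(E))=\nu\otimes\lambda_d(h_j(E))=\mu_j(E),
\]
the middle equality being Lemma \ref{transverse-invariant-invariance}. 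This agreement on overlaps shows that $\mu$ is independent of the order chosen for the disjointification $\widetilde U_i$, so only dependence on the atlas itself remains to be ruled out.

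Next, let $\{(V_k,g_k)\}_{k=1}^{m}$ be a second atlas with $V_k=C_{Q_k}+\Psi_k$, and let $\mu'$ be the measure it produces. Fix indices $i,k$ and consider a point $\T+\bv\in U_i\cap V_k$ with $\T\in C_{P_i}$ and $\bv\in\Theta_i$. By the uniqueness property (\ref{eq-inter}), the decomposition $\T+\bv=\T''+\bv''$ with $\T''\in C_{Q_k}$, $\bv''\in\Psi_k$ is unique, hence the transition $g_k\circ h_i^{-1}$ is well-defined on $h_i(U_i\cap V_k)$. By FPC there are only finitely many translation-equivalence classes of patches of diameter at most $\diam(P_i)+\diam(Q_k)+2\eta$, and so the set of possible values of $\bv-\bv''$ that can arise is discrete. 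It follows that $h_i(U_i\cap V_k)$ decomposes as a countable disjoint union of Borel pieces on each of which the transition has the form $(\T,\bv)\mapsto(\T+\ba,\bv+\bB)$ for some constants $\ba,\bB$. On each such piece, Lemma \ref{transverse-invariant-invariance} gives $\mu_i(\cdot)=\mu'_k(\cdot)$, and summing yields $\mu_i(E)=\mu'_k(E)$ for every Borel $E\seq U_i\cap V_k$.

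Finally, for any $U\in\B(X)$, writing $\widetilde V_k$ for the analogous disjointification coming from the second atlas, countable additivity gives
\[
\mu(U)=\sum_{i=1}^n\mu_i(U\cap\widetilde U_i)=\sum_{i,k}\mu_i(U\cap\widetilde U_i\cap\widetilde V_k)=\sum_{i,k}\mu'_k(U\cap\widetilde U_i\cap\widetilde V_k)=\mu'(U),
\]
establishing the lemma. The principal obstacle is the middle step: verifying that the transition between charts of two unrelated atlases decomposes into countably many pieces on each of which a single translation acts. This requires combining FPC (to enumerate the possible local configurations around the origin) with (\ref{eq-inter}) (to ensure that once a local configuration is fixed, the translation parameter is determined uniquely), but beyond this bookkeeping the argument is routine.
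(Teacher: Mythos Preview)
Your proposal is correct and follows essentially the same approach as the paper: both decompose the transition between charts of two atlases into countably many Borel pieces on each of which a fixed translation $\psi^{(\ba,\bB)}$ acts (the paper indexes these pieces by the countable set $\Lambda=\{\bv:(\Gamma-\bv)\cap\Gamma\ne\emptyset\}$, you by patch classes via FPC), and then apply Lemma~\ref{transverse-invariant-invariance} piecewise. Your preliminary step showing that $\mu_i$ and $\mu_j$ agree on $U_i\cap U_j$ within a single atlas---hence that $\mu$ is independent of the disjointification order---is not made explicit in the paper but is a welcome clarification.
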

\begin{proof}
Since $X$ verifies FPC, the set $\Lambda=\{\bv\in\RR^d:
(\Gamma-\bv)\cap \Gamma\neq \emptyset\}$ is countable. Let
$\Lambda=\{\bv_n: n\in\NN\}$.

Let  $\{V_i,f_i\}_{i=1}^m$ be another atlas of $X$. For every
$1\leq i\leq m$, let $\wt{\mu}_i$ be the measure on $V_i$
defined as $\wt{\mu}_i=(\nu\otimes\lambda_d)\circ f_i$.  We set
$\wt{V}_1=V_1$ and
$\wt{V}_i=V_i\setminus(\bigcup_{j=1}^{i-1}V_j)$, for every
$2\leq i\leq m$. Denote by $\wt{\mu}$ the measure on $X$ defined
by  $\wt{\mu}=\sum_{i=1}^m\wt{\mu}_i|_{\wt{V}_i}$.

Let $\T$ be a tiling in $U_i\cap V_j$, for some $1\leq i\leq n$
and $1\leq j\leq m$. If $h_i(\T)=(\T_i,\bv_i)$ then there
exists $\bv(\T)\in\Lambda$ such that $f_j(\T)=(\T_i+\bv(\T),
\bv_i-\bv(\T))$. Since $\bv(\T)$ is in $\Lambda$,
there exists $n\in\NN$ such that $\bv(\T)=\bv_n$. Thus if $U$
is a Borel set in $U_i\cap V_j$, it can be written as
$$
U=\bigcup_{n\in\NN} U_n, \mbox{ where } U_n=\{\T\in U:
\,f_j(\T)=\psi^{\bv_n}(h_i(\T))\},
$$
where $\psi^{\bv_n}$ abbreviates
$\psi^{(\bv_n,-\bv_n)}$. The sets $U_n$ are disjoint and
measurable.

From Lemma \ref{transverse-invariant-invariance}, for every
$n\in\NN$ we have
\begin{eqnarray*}
\mu_i(U_n)&=& \nu\otimes\lambda_d(h_i(U_n))\\
          &=& \nu\otimes\lambda_d(\psi^{\bv_n}(h_i(U_n)))\\
          &=&\nu\otimes\lambda_d(f_j(U_n))\\
          &=&\wt{\mu}_j(U_n),
\end{eqnarray*}
which implies that $\mu_i(U)=\wt{\mu}_j(U)$, and hence
$\mu=\wt{\mu}$.
\end{proof}

\begin{remark}
{\rm From the proof of Lemma
\ref{transverse-invariant-independence} we also deduce that
$\mu(U)=\mu_i(U)$, for every Borel set $U\subseteq U_i$. }
\end{remark}

\begin{lemma}\label{transverse-to-invariant}
The measure $\mu$ is invariant and $\mu^T=\nu$.
\end{lemma}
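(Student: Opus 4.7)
The plan is to establish both claims by exploiting the chart structure, together with the atlas-independence of Lemma \ref{transverse-invariant-independence}. On each chart $U_i$ the measure $\mu$ coincides with the pullback of $\nu \otimes \lambda_d$ under $h_i$; since translation in $X$ corresponds, via $h_i$, to translation in the $\R^d$ factor only, local invariance under $\R^d$ is essentially built in. The only subtlety is that a translate of a set in one chart may fall into a different chart, which I handle by translating the entire atlas.

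For invariance, fix $\bv \in \R^d$ and introduce the translated collection $\{V_i, g_i\}_{i=1}^n$ defined by $V_i := U_i - \bv$ and $g_i(\Sk) := h_i(\Sk + \bv)$. Since $X + \bv = X$, the $V_i$ cover $X$, and a direct computation gives $g_j \circ g_i^{-1}(\T, \bu) = h_j \circ h_i^{-1}(\T, \bu) = (\T + \ba_{i,j}, \bu + \bB_{i,j})$, so (\ref{eq-transverse-invariant-1}) is inherited and $\{V_i, g_i\}$ is again an atlas of $X$. Let $\wt{\mu}$ be the measure built from this atlas by the same recipe as $\mu$. Lemma \ref{transverse-invariant-independence} gives $\wt{\mu} = \mu$. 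On the other hand $\wt{V}_i = \wt{U}_i - \bv$ and $g_i(U \cap \wt{V}_i) = h_i((U+\bv) \cap \wt{U}_i)$ for every Borel $U \subseteq X$, so
\[
\wt{\mu}(U) \;=\; \sum_{i=1}^n \nu \otimes \lambda_d\bigl(h_i((U+\bv) \cap \wt{U}_i)\bigr) \;=\; \mu(U+\bv).
\]
Combining these identities yields $\mu(U) = \mu(U+\bv)$, which is the desired invariance.

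For the equality $\mu^T = \nu$, let $V \in \B(\Gamma)$. By countable additivity and FPC, after refining the atlas I may assume $V \subseteq C_{P_i}$ for some $i$; by applying Lemma \ref{transverse-invariant-independence} to the shifted atlas $\{U_i - \ba_i,\,h_i(\cdot + \ba_i)\}_{i=1}^n$, I may further assume $\b0 \in \Theta_i$. Then for every sufficiently small open $\Theta \subseteq \Theta_i \cap B_\eta(\b0)$ containing $\b0$ we have $V + \Theta \subseteq U_i$ and $h_i(V + \Theta) = V \times \Theta$. The remark following Lemma \ref{transverse-invariant-independence} gives $\mu = \mu_i$ on $U_i$, hence
\[
\mu(V + \Theta) \;=\; \mu_i(V + \Theta) \;=\; \nu \otimes \lambda_d(V \times \Theta) \;=\; \nu(V)\cdot \vol(\Theta).
\]
Dividing by $\vol(\Theta)$ and invoking the definition of $\mu^T$ from Lemma \ref{invariant-transversal} gives $\mu^T(V) = \nu(V)$.

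The main technical burden is purely bookkeeping: verifying that the translated collection $\{V_i, g_i\}$ genuinely satisfies the atlas conditions, in particular that the transition identity (\ref{eq-transverse-invariant-1}) is preserved under translation of the charts and that the measure-from-atlas construction applied to $\{V_i, g_i\}$ produces exactly $\mu(\cdot + \bv)$. Once this is granted, invariance is an immediate consequence of Lemma \ref{transverse-invariant-independence}, and the identification $\mu^T = \nu$ is a tautology expressing that $\mu$ restricted to any chart is a product measure.
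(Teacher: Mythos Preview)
Your proof is correct and follows essentially the same strategy as the paper: construct a translated atlas and invoke atlas-independence (Lemma~\ref{transverse-invariant-independence}) for invariance, then compute $\mu$ on a product chart to identify $\mu^T$ with $\nu$. The only cosmetic difference is that the paper's translated chart $f_i:V_i\to C_{P_i}\times(\Theta_i-\bv)$ is defined as $f_i(\T)=\psi^{(\b0,\bv)}(h_i(\T+\bv))$, i.e.\ it post-composes your $g_i$ with $\psi^{(\b0,\bv)}$ so that the chart retains the ``natural'' form $\T'+\bu\mapsto(\T',\bu)$; this makes Lemma~\ref{transverse-invariant-independence} apply verbatim, whereas your $g_i$ requires the mild generalization you flag as bookkeeping (handled by the full generality of Lemma~\ref{transverse-invariant-invariance}).
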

\begin{proof}
Let $\bv\in\RR^d$. For every $1\leq i\leq n$, let
$V_i=U_i-\bv$ and $f_i:V_i\to C_{P_i}\times(\Theta_i-\bv)$
be defined as $f_i(\T)=\psi^{(\b0,\bv)}(h_i(\T+\bv))$.
The collection $\{V_i,f_i\}$ is an atlas of $X$.

Let $U$ be a Borel set in $U_i$. We have $U-\bv\subseteq V_i$.
Then from Lemma \ref{transverse-invariant-invariance} and Lemma
\ref{transverse-invariant-independence} we get
\begin{eqnarray*}
\mu(U-\bv) &=& \nu\otimes\lambda_d(f_i(U-\bv))\\
               &=&\nu\otimes\lambda_d(\psi^{(\b0,\bv)}(h_i(U)))\\
               &=&\nu\otimes\lambda_d(h_i(U)))\\
               &=&\mu_i(U)\\
               &=&\mu(U).
\end{eqnarray*}
This shows that $\mu$ is invariant.

\medskip

Let $C\in\B(\Gamma)$ and $\Theta\subseteq B_{\eta}(\b0)$ an open
set. We can assume that $C+\Theta$ is the disjoint union of sets
$C_i+\Theta$, where $C_i\subseteq C_{P_i}$ and $\Theta\subseteq
\Theta_i$, for every $1\leq i\leq n$. We have
$$
\mu(C_i+\Theta)=\nu^T\otimes\lambda_d(C_i\times
\Theta)=\nu(C_i)\lambda_d(\theta).
$$
Hence $\mu(C+\Theta)=\nu(C)\lambda_d(\Theta)$, which implies that
$\mu^T=\nu$.
\end{proof}

\begin{theorem}
\label{correspondence} There is a linear one-to-one correspondence
between the set of $\sigma$-finite invariant measures  and the set
of $\sigma$-finite transverse measures of $(X,\RR^d)$.
\end{theorem}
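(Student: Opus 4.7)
The plan is to assemble the lemmas of Section~\ref{Apendix A} into the asserted bijection and then verify the one direction of inversion and the preservation of $\sig$-finiteness that remain.

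Define $\Phi(\mu) = \mu^T$ for an invariant measure $\mu$ via Lemmas~\ref{invariant-transversal} and~\ref{transversal-0}, and $\Psi(\nu) = \mu_\nu$ for a transverse measure $\nu$ via the atlas construction of Section~\ref{transversal-to-invariant}, which is well-defined independently of the atlas by Lemma~\ref{transverse-invariant-independence}. Linearity of both maps is immediate: $\Phi$ sends $\mu$ to the constant value of the ratio $\mu(U+\Theta)/\vol(\Theta)$, which is linear in $\mu$, while $\Psi$ glues together the product measures $\nu\otimes\lambda_d$ in each chart, which is linear in $\nu$. Lemma~\ref{transverse-to-invariant} already gives $\Phi\circ\Psi=\mathrm{id}$, so the nontrivial remaining step is $\Psi\circ\Phi=\mathrm{id}$.

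The main step is to verify $\mu_{\mu^T}=\mu$ for every $\sig$-finite invariant $\mu$. Fix an atlas $\{U_i,h_i\}_{i=1}^n$ and on each chart $U_i = C_{P_i}+\Theta_i$ consider the family
\[
\G_i \;=\; \{\,V+\Theta' :\ V\in\B(\Gamma),\ V\seq C_{P_i},\ \Theta'\text{ open},\ \Theta'\seq \Theta_i\,\}.
\]
This is a $\pi$-system because $h_i$ is a homeomorphism and hence $(V_1+\Theta_1')\cap (V_2+\Theta_2')=(V_1\cap V_2)+(\Theta_1'\cap\Theta_2')$, and it generates $\B(U_i)$ since $\{C_P:P\in\Lambda_X\}$ is a countable clopen base of $\B(\Gamma)$ by FPC. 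On an element of $\G_i$, translation invariance of $\mu$ together with Lemma~\ref{invariant-transversal} gives $\mu(V+\Theta')=\mu^T(V)\vol(\Theta')$, while by construction $\mu_{\mu^T}(V+\Theta')=(\mu^T\otimes\lambda_d)(h_i(V+\Theta'))=\mu^T(V)\vol(\Theta')$. Hence the two measures agree on the generating $\pi$-system $\G_i$; the standard uniqueness of extension for $\sig$-finite measures yields $\mu_{\mu^T}=\mu$ on $\B(U_i)$, and summing over the disjointification $\wt U_i$ gives equality on all of $\B(X)$.

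Finally, $\sig$-finiteness transfers under the correspondence: if $\nu$ is $\sig$-finite on $\Gamma$, a countable decomposition $C_{P_i}=\bigcup_m F_{m,i}$ with $\nu(F_{m,i})<\infty$ produces a countable cover of $X=\bigcup_{i=1}^n U_i$ by sets $F_{m,i}+\Theta_i$ of finite $\mu_\nu$-measure, and conversely, once $\Psi\circ\Phi=\mathrm{id}$ is known, a $\sig$-finite exhaustion of $X$ by $\mu$ restricts, via $\mu(V+\Theta)=\mu^T(V)\vol(\Theta)$, to one of $\Gamma$ by $\mu^T$. The main obstacle is the $\pi$-$\lambda$ step: one must produce a common $\sig$-finite exhaustion of $U_i$ inside $\G_i$ itself so that uniqueness of measure extension applies, and this is handled by taking countable refinements of $\Theta_i$ into small open pieces and intersecting with a $\sig$-finite exhaustion of $C_{P_i}$ under $\mu^T$.
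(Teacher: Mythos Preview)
Your proof is correct and follows essentially the same route as the paper. Both arguments reduce to showing that on each chart $U_i$ the pushforward $\mu\circ h_i^{-1}$ agrees with $\mu^T\otimes\lambda_d$ on rectangles $C\times\Theta$, and then invoke a uniqueness-of-extension principle; the paper simply cites ``uniqueness of the product measure'' where you spell out the $\pi$--$\lambda$ step and the needed $\sigma$-finite exhaustion inside $\G_i$.
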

\begin{proof}
Lemma \ref{transverse-to-invariant} shows that the function that
associates to every invariant measure $\mu$ its transverse measure
$\mu^T$ is onto.

Let $\nu$ be a transverse measure and let $\mu$ be an invariant
measure such that $\mu^T=\nu$. Let $\{U_i,h_i\}_{i=1}^n$ be an
atlas of $X$. The measure $\mu\circ h_i^{-1}$ is defined on
$h_i(U_i)$ and verifies $$\mu\circ
h_i^{-1}(C\times\Theta)=\nu\otimes\lambda_d(C\times \Theta),$$ for
every $C\times \Theta\in (\B(\Gamma)\times \B(\RR^d))\cap
h_i(U_i)$. The  uniqueness of the product measure implies that
$\mu$ is the invariant measure of Lemma
\ref{transverse-to-invariant}. Therefore, the function that
associates to every invariant measure $\mu$ its transverse measure
$\mu^T$ is one-to-one. The linearity is clear.
\end{proof}

\end{document}